\documentclass[10pt, letterpaper]{article}

\usepackage{amsmath, amssymb, amsthm, graphicx, amscd, amsfonts, mathrsfs, mathtools}
\usepackage{geometry}
\usepackage{hyperref}
\usepackage{setspace}
\usepackage{microtype} 
\usepackage{enumitem}

\newtheorem{theorem}{Theorem}[section]
\newtheorem{lemma}[theorem]{Lemma}
\newtheorem{proposition}[theorem]{Proposition}
\newtheorem{corollary}[theorem]{Corollary}
\theoremstyle{definition}
\newtheorem{definition}[theorem]{Definition}

\newcommand{\F}{\mathbb{F}}

\newcommand{\bb}[1]{\mathbb{#1}}
\newcommand{\bd}{\mathbf{d}}
\newcommand{\Tr}{\operatorname{Tr}}
\newcommand{\ord}{\operatorname{ord}}
\newcommand{\lcm}{\operatorname{lcm}}
\newcommand{\supp}{\operatorname{supp}}
\newcommand{\Gal}{\operatorname{Gal}}
\newcommand{\Teich}{\operatorname{Teich}}
\newcommand{\Frob}{\operatorname{Frob}}

\begin{document}

\title{$p$-adic Theory for Partial Toric Exponential Sums}
\author{C. Douglas Haessig}
\date{\today}

\maketitle

\begin{abstract}
Wan proved the rationality of partial toric $L$-functions using $\ell$-adic techniques \cite{MR2027782}. In this paper, we present a $p$-adic proof in the spirit of Dwork. We demonstrate that partial $L$-functions can be expressed as an alternating product of twisted Fredholm determinants. These twisted determinants appear to be intrinsic to the analytic structure of partial $L$-functions, and unlike their classical counterparts, twisted Fredholm determinants of completely continuous operators are not automatically $p$-adic entire functions. However, for partial $L$-functions they will be $p$-adic meromorphic.

After proving rationality, we construct a $p$-adic cohomology theory and give a $p$-adic cohomological formula for partial toric $L$-functions. Last, we show they have a unique $p$-adic unit root which may be explicitly written in terms of $\mathcal{A}$-hypergeometric series.
\end{abstract}

\tableofcontents
\vspace{1em}

\section{Introduction}
\label{sec:intro}

Let $\bb{F}_q$ be the finite field of $q = p^a$ elements of characteristic $p$, and let $f \in \bb{F}_q[x_1^{\pm}, \ldots, x_n^{\pm}]$ be a Laurent polynomial in $n$ variables. Fix a nontrivial additive character $\psi: \bb{F}_q \to \bb{C}_p^\times$. In the classical theory of toric exponential sums, one studies $\sum \psi \circ  \Tr_{\bb{F}_{q^k} / \bb{F}_q}(f(x)) $ as the sum runs over $x \in (\bb{F}_{q^k}^*)^n$.

In this paper, we are interested in an asymmetric generalization known as \emph{partial toric exponential sums}, where the variables lie in possibly distinct extensions of $\bb F_q$. Let $\bd = (d_1, \dots, d_n) \in \bb{Z}_{\ge 1}^n$ and set $d := \lcm(d_1, \dots, d_n)$. For each integer $k \ge 1$, we define the \emph{partial toric exponential sum} of $f$ with respect to $\bd$ as
\begin{equation}\label{eq:def_Sk}
S_k(f, \bd) := \sum_{x_1, \dots, x_n} \psi \circ \Tr_{\bb{F}_{q^{kd}} / \bb{F}_q} \big( f(x_1, \dots, x_n) \big),
\end{equation}
where the summation runs over all $x_i \in \bb{F}_{q^{k d_i}}^*$ for $1 \le i \le n$. Define the associated partial $L$-function by
\begin{equation}\label{eq:def_L}
L(\bd, f/\bb{F}_q, T) := \exp \left( \sum_{k=1}^\infty S_k(f, \bd) \frac{T^k}{k} \right) \in 1 + T\bb{Q}(\zeta_p)[[T]].
\end{equation}

Dwork's classical rationality proof \cite{Dwork-rationalityofzeta-1960} follows from three properties of the $L$-function: $p$-adic meromorphic, complex analytic, and the coefficients lie in a finite extension of $\bb Q$. The partial $L$-function satisfies the last two properties in the same way classical $L$-functions of toric exponential sums satisfy them. The obstruction here lies in the $p$-adic meromorphy. 

Wan \cite{MR2027782} proved the rationality of such partial $L$-functions and partial zeta functions using $\ell$-adic cohomological methods. Strongly guided by his proof, we provide a $p$-adic proof of this rationality. The main obstacle in constructing the theory is that everything is twisted by a permutation $\sigma$. This means we need to prove a twisted version of Dwork's trace formula, and the Fredholm determinants are twisted by $\sigma$. A twisted Fredholm determinant is defined by 
\[
\det\nolimits_{\mathcal{H}}(I - T\Phi \mid V) := \exp \left( - \sum_{k=1}^\infty \Tr(\mathcal{H} \circ \Phi^k \mid V) \frac{T^k}{k} \right).
\]
Even for completely continuous operators, it may not be the case that twisted Fredholm determinants are $p$-adic entire. However, in the case of partial $L$-functions, we show they the corresponding twisted Fredholm determinants are $p$-adic meromorphic. This gives us all we need to prove rationality.

\begin{theorem}
The partial $L$-function $L(\bd, f/\bb{F}_q, T)$ is a rational function in $\bb{Q}(\zeta_p)(T)$.
\end{theorem}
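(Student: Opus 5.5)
We follow Dwork's strategy: prove that $L(\bd, f/\bb{F}_q, T)$ is $p$-adic meromorphic on all of $\bb{C}_p$, then combine this with complex-analytic information and the Borel--Dwork rationality criterion.

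\emph{Step 1: reduce to a single Frobenius twisted by a permutation.} Put $D = \sum_i d_i$. The map sending $x_i \in \bb{F}_{q^{kd_i}}^*$ to its $d_i$ Galois conjugates over $\bb F_q$ turns the sum over $(x_1,\dots,x_n)$ into a sum over tuples $y = (y_{i,j})_{1\le i\le n,\,0\le j<d_i} \in (\overline{\bb F}_q^*)^D$ that satisfy $y^{q^k} = \sigma(y)$. Here $\sigma$ is the permutation of $\{(i,j)\}$ that cyclically shifts the $j$-index within the $i$-th block. Under this change of variables the summand $\psi\circ\Tr_{\bb F_{q^{kd}}/\bb F_q} f(x)$ becomes $\psi\circ\Tr_{\bb F_{q^k}/\bb F_q} F(y)$ for an auxiliary Laurent polynomial $F$ in $D$ variables. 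Concretely, $F(y)$ is, up to the factor $d/d_i$ bookkeeping, the sum over $j$ of $f$ evaluated at shifted coordinates, so that $F\circ\sigma = F$. The fact that $F$ is $\sigma$-invariant is what lets the trace collapse correctly.

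\emph{Step 2: twisted Dwork trace formula.} Let $B$ be the usual Dwork space of overconvergent series supported on the cone of $F$. Form the splitting function $\theta(F)$ and the operator $\alpha = \psi_q\circ \prod\theta$. The map $\sigma$ acts on $B$ by permuting variables, giving an operator $\mathcal H$ of finite order that commutes with $\alpha$. We prove a twisted trace formula
\[
(q^k-1)^D\,\Tr(\mathcal H\circ\alpha^k\mid B) \;=\; \sum_{y^{q^k}=\sigma(y)} \prod \theta\text{-values} \;=\; S_k(f,\bd),
\]
using the standard computation on monomials: $\mathcal H\alpha^k$ sends $x^u$ to a combination of $x^{\sigma(v)}$, and its trace picks up the coefficients with $u=\sigma(q^k u')$. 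The factor $(q^k-1)^D$ is then handled by the usual Koszul/inclusion–exclusion trick, i.e.\ by passing to the operators on $\bigwedge^\bullet$ with $\partial$-twisted copies. This gives
\[
L(\bd,f,T)=\prod_{i=0}^{D}\det\nolimits_{\mathcal H_i}(I-Tq^{i}\alpha\mid B^{\binom{D}{i}})^{(-1)^{i+1}},
\]
where $\mathcal H_i$ is $\sigma$ acting on both the exponents and the exterior factors.

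\emph{Step 3 (main obstacle): meromorphy of the twisted determinants.} Because $\mathcal H$ has finite order $m$ and commutes with $\alpha$, we decompose $B\otimes\bb C_p(\zeta_m)$ into $\mathcal H$-eigenspaces $B_\chi$. Each $B_\chi$ is stable under $\alpha$, and $\alpha$ restricted to it remains completely continuous, since these are closed summands cut out by projections $e_\chi=\frac1m\sum\chi(h)^{-1}h$. On $B_\chi$ we have $\Tr(\mathcal H\alpha^k)=\chi\,\Tr(\alpha^k\mid B_\chi)$, so
\[
\det\nolimits_{\mathcal H}(I-T\alpha)=\prod_\chi \det(I-T\alpha\mid B_\chi)^{\chi},
\]
a product of entire functions raised to root-of-unity exponents. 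This is the point that needs care: a non-integer exponent destroys meromorphy. To get around it we use the fact that the $L$-function is unchanged if we use the decomposition along $\langle\sigma\rangle$-orbits directly. Monomials $x^u$ whose orbit under $\sigma$ has length $\ell$ contribute to $\Tr(\mathcal H\alpha^k)$ only through $\ell$-cycles, so the twisted trace on the orbit blocks equals $\Tr(\beta^{k})$ for an auxiliary completely continuous operator $\beta$ built from $\alpha$ and $\sigma$ restricted to a fundamental domain. The operator $\beta$ is essentially ``$\sigma^{-1}\circ\alpha$'', which is itself a Dwork-type operator. Its ordinary Fredholm determinant is entire, and this shows each twisted determinant is entire or, after the alternating product, meromorphic. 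If this identification fails for some pieces, the fallback is to note that $S_k$ is $\bb Z$-valued in $\bb Z[\zeta_p]$ and to track exponents via Galois symmetry so that they combine to integers.

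\emph{Step 4: conclude.} $L$ is $p$-adic meromorphic. It is also complex-meromorphic in a disk of radius $q^{-dD}$, because $|S_k|\le q^{kdD}$ and so $\sum_k S_k T^k/k$ converges there. Its coefficients lie in $\bb Q(\zeta_p)$ with denominators bounded. Dwork's rationality criterion (via Borel's lemma on Hankel determinants) then yields $L\in\bb Q(\zeta_p)(T)$.
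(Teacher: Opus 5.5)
Your overall architecture matches the paper's: unfold to $N=\sum_i d_i$ variables with a $\sigma$-invariant polynomial, prove a twisted Dwork trace formula, pass to twisted Fredholm determinants, and apply Dwork's criterion. Step 4 is as in the paper. There are two problems, one minor and one fatal.

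\textbf{Minor: the factor in Step 2.} The set $\{y: y^{q^k}=\sigma(y)\}$ has $\prod_i(q^{kd_i}-1)=\det(q^kI-P)$ elements, not $(q^k-1)^N$. The correct formula is $S_k(f,\bd)=\det(q^kI-P)\,\Tr(\sigma\circ\alpha_a^k\mid B)$. Your Koszul product is nonetheless right up to indexing and sign. That is precisely because letting $\sigma$ act on the exterior factors produces $\sum_m(-1)^m q^{k(N-m)}\Tr(\wedge^m\sigma)=\det(q^kI-P)$. With your stated factor $(q^k-1)^N$ you would have needed untwisted exterior factors, so the two displays are inconsistent.

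\textbf{Fatal: Step 3.} This is the only genuinely new point in the theorem, and the operator $\beta$ you propose does not exist as described.
\begin{itemize}
\item Since $\sigma$ and $\alpha$ commute, $\Tr\bigl((\sigma^{-1}\alpha)^k\bigr)=\Tr(\sigma^{-k}\alpha^k)$. The twist here varies with $k$, so this is not $\Tr(\sigma\alpha^k)$.
\item $\alpha$ does not preserve the span of the monomials in a $\sigma$-orbit, or in a fundamental domain for $\sigma$. Already $\alpha_a(1)=\psi_q(F_a)$ involves monomials from many different orbits. So ``restricting to a fundamental domain'' defines no operator.
\item Any completely continuous $\beta$ with $\Tr(\beta^k)=\Tr(\sigma\alpha^k\mid B)$ for all $k$ would make $\det\nolimits_\sigma(I-T\alpha\mid B)=\det(I-T\beta)$ entire. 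That would force every $c_\lambda=\Tr(\sigma\mid V_\lambda)$ to be a \emph{nonnegative} integer. This is much stronger than what is needed or proved (the paper obtains only meromorphy), and nothing in your sketch supports it.
\end{itemize}
The fallback does not rescue the step. Conjugating $\zeta_p$ replaces $\psi$ by $\psi^j$, i.e.\ $f$ by $jf$. It says nothing about the root-of-unity exponents that come from $\sigma$.

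\textbf{The missing idea.} The twisted trace does not depend on which generator of $\langle\sigma\rangle$ you twist by. Take $\gcd(b,d)=1$.
\begin{itemize}
\item The set $\{y: y^{q^k}=\sigma^b(y)\}$ is still in bijection with $\prod_i\F_{q^{kd_i}}^*$ via $y\mapsto(y_{i,0})_i$, because $m\mapsto mb$ permutes each $\bb Z/d_i\bb Z$.
\item $G$ restricted to this set still equals $\Tr_{\F_{q^{kd}}/\F_{q^k}}f(y_{1,0},\dots,y_{n,0})$.
\item The set has $\det(q^kI-P^b)=\det(q^kI-P)$ elements.
\end{itemize}
Hence $\Tr(\sigma^b\alpha_a^k\mid B)=\Tr(\sigma\alpha_a^k\mid B)$ for all $k\ge 1$.

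The paper then uses uniqueness of expansions $\sum_\lambda c_\lambda\lambda^k$ with $|\lambda|_p\to 0$. This gives $\Tr(\sigma^b\mid V_\lambda)=\Tr(\sigma\mid V_\lambda)$ for every eigenvalue $\lambda$. Elements of $\Gal(\mathbb{Q}(\zeta_d)/\mathbb{Q})$ send $\Tr(\sigma\mid V_\lambda)$ to $\Tr(\sigma^b\mid V_\lambda)$, so $c_\lambda\in\mathbb{Q}(\zeta_d)$ is Galois-fixed. Being an algebraic integer, it lies in $\bb Z$. That integrality is what makes $\det\nolimits_\sigma(I-T\alpha_a\mid B)=\prod_\lambda(1-\lambda T)^{c_\lambda}$ meromorphic, and it is what lets your Step 4 go through.

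Your eigenspace decomposition in Step 3 correctly isolates the problem of root-of-unity exponents, but it supplies no mechanism to resolve it.
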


We also construct a $p$-adic cohomology theory for partial $L$-functions. The underlying complex and differentials are the usual ones coming from Adolphson and Sperber's theory, but the Frobenius chain map must be twisted by $\sigma$ (which corresponds to $\mathcal{H}^{(m)}$ in the following):

\begin{theorem}\label{thm:main_rationality}
There exists a $p$-adic cohomology theory $H^\bullet(B, D)$ such that
\[
L(\bd, f/\bb{F}_q, T) = \prod_{m=0}^N \det\nolimits_{\mathcal{H}^{(m)}} \! \left(I - T\Frob_q^m \ \Big|\ H^m(B, D)\right)^{(-1)^{m+1}},
\]
where $N = \sum_{i=1}^n d_i$.
\end{theorem}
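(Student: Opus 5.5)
The plan is to reduce the partial sum to a classical toric sum over a single extension twisted by a permutation, and then to run Dwork/Adolphson--Sperber theory in the variables of that larger torus. First, write each variable $x_i \in \bb{F}_{q^{kd_i}}^*$ in terms of its Galois orbit over $\bb{F}_q$. Concretely, introduce $N = \sum_i d_i$ new variables $y_{i,j}$ with $1 \le i \le n$ and $0 \le j < d_i$, together with the permutation $\sigma$ that cyclically shifts $j \mapsto j+1 \bmod d_i$ within each block $i$. Define the Laurent polynomial $F(y) = \sum_{i,j} f^{(j)}$, i.e.\ a sum of Frobenius-conjugate copies of $f$ evaluated along the blocks. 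Using Wan's decomposition, I would write $\Tr_{\bb{F}_{q^{kd}}/\bb{F}_q} f(x)$ as a trace of $F$. The aim is to express $S_k(f,\bd)$ as a sum over the fixed points of the twisted map $y \mapsto \sigma(y^{q})$ iterated $k$ times on $(\overline{\bb{F}}_q^*)^N$, with the character $\psi\circ\Tr F$ attached. This is the step where the $\sigma$-twist enters.

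Second, I prove a twisted Dwork trace formula. Take the Dwork splitting function $\theta$ and the Frobenius operator $\alpha = \psi_q \circ G_F$, where $G_F$ is the product of the $\theta$'s over the monomials of the Teichmüller lift of $F$. Let $\alpha$ act on the Banach space $B$ of overconvergent series supported on the cone of the Newton polyhedron of $F$ in $\bb{Z}^N$. Let $\mathcal{H}$ be the operator $y^u \mapsto y^{\sigma(u)}$. The formula I aim for is
\[
S_k = (q^k-1)^N \Tr(\mathcal{H}^k \alpha^k \mid B).
\]
This is proved by the usual computation: the trace of a twisted diagonal-type operator is a sum over $u$ with $q^k\sigma^k(u)=u$-type conditions, which become sums over Teichmüller points fixed by the twisted Frobenius. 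Writing $(q^k-1)^N$ via the Koszul/exterior algebra gives $L = \prod_m \det_{\mathcal{H}^{(m)}}(I - Tq^m\alpha \mid \wedge^m)^{(-1)^{m+1}}$. Here $\mathcal{H}^{(m)}$ is the induced action of $\sigma$ on $B\otimes\wedge^m(\bb{C}_p^N)$, with the appropriate sign.

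Third, I build the complex. Let $D_i = y_i\partial_{y_i} + \pi y_i \partial_{y_i} F$ for the Adolphson--Sperber Koszul complex on $B$. Since $\sigma$ permutes the variables and $F$ is $\sigma$-invariant up to Frobenius, $\mathcal{H}$ commutes with the $D_i$ up to reindexing. The chain map $\Frob^m = q^{N-m}\alpha$ intertwines them in the standard way, so $\mathcal{H}^{(m)}\Frob^m$ is a chain map. By additivity of traces over a complex of nuclear operators (an Euler characteristic argument applied at each $k$), the alternating product over chains equals that over cohomology. This yields the formula for $L$.

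The main obstacle is making sense of $\det_{\mathcal{H}^{(m)}}$ on $H^m$. Twisted determinants need not be entire, so I must show that each $H^m$ is finite-dimensional, or at least that $\mathcal{H}\Frob$ acts completely continuously with a well-defined trace. For this I would first use that $\mathcal{H}$ has finite order $d$. Then $\mathcal{H}^k\alpha^k$ can be analyzed through $(\mathcal{H}\alpha)^k$, since $\mathcal{H}$ commutes with $\alpha$ up to the Frobenius twist built into $F$. I would then try to prove nondegeneracy of $F$ from the assumed nondegeneracy of $f$ and invoke Adolphson--Sperber acyclicity: cohomology concentrated in degree $N$ and of finite rank. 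In the general degenerate case, where I cannot do this, I would instead appeal to the earlier meromorphy result for these twisted determinants.
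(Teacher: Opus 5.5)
\textbf{The twisted trace formula is wrong.} The fixed points of the $k$-th iterate of $y\mapsto\sigma(y^q)$ are the solutions of $y^{q^k}=\sigma^{-k}(y)$. On block $i$ the shift by $k$ has $\gcd(k,d_i)$ orbits, so this set is $\prod_i(\F_{q^{\lcm(k,d_i)}}^*)^{\gcd(k,d_i)}$, the $\F_{q^k}$-points of the Weil restriction. It agrees with $\prod_i\F_{q^{kd_i}}^*$ only when $\gcd(k,d)=1$, and its size is $\det(q^kI-P^k)$, not $(q^k-1)^N$.

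A concrete counterexample: take $n=1$, $d_1=2$, $f=x$. Then $S_k(f,\bd)=\sum_{x\in\F_{q^{2k}}^*}\psi(\Tr_{\F_{q^{2k}}/\F_q}x)=-1$ for every $k$. For even $k$ your set is $(\F_{q^k}^*)^2$ with $G=y_0+y_1$, so your sum is $+1$. For odd $k$ your set has $q^{2k}-1$ elements, so the prefactor $(q^k-1)^2$ is wrong.

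The fix is to keep the twist independent of $k$: set $W_k=\{y: y^{q^k}=\sigma(y)\}$ for every $k$, and use the unfolded polynomial $G(y)=\sum_{l=0}^{d-1}f(y_{1,l\bmod d_1},\dots,y_{n,l\bmod d_n})$. This is a sum over diagonal slices with no Frobenius on the coefficients, and it is exactly $\sigma$-invariant. Then:
\begin{itemize}
\item on $W_k$ one has $y_{i,m}=y_{i,0}^{q^{mk}}$, so $G$ restricts to $\Tr_{\F_{q^{kd}}/\F_{q^k}}f(y_{\cdot,0})$;
\item $|W_k|=\det(q^kI-P)=\prod_i(q^{kd_i}-1)$;
\item character orthogonality gives $S_k=\det(q^kI-P)\,\Tr(\sigma\circ\alpha_a^k\mid B)$, with $\sigma$ to the first power for all $k$.
\end{itemize}
This is why $\det\nolimits_{\mathcal H}(I-T\Phi)=\exp(-\sum_k\Tr(\mathcal H\Phi^k)T^k/k)$ is a genuinely new object. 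If your $\mathcal H^k\alpha^k=(\mathcal H\alpha)^k$ were correct, you would have the ordinary, entire Fredholm determinant of $\mathcal H\alpha$, and the non-entireness problem you raise could not arise.

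\textbf{The Koszul bookkeeping must change accordingly.} The factor $\prod_i(q^{kd_i}-1)$ has to come out of the complex as $\sum_m(-1)^mq^{k(N-m)}\Tr(\wedge^m\sigma)=\det(q^kI-\sigma)$. That is why one takes $\Frob_q^m=q^{N-m}\alpha_a\otimes\mathrm{id}$ and $\mathcal H^{(m)}=\sigma\otimes\wedge^m\sigma$. The latter choice is forced anyway, since $\sigma D_{i,j}=D_{i,j+1}\sigma$. A binomial expansion of $(q^k-1)^N$ over untwisted $\wedge^m$ cannot produce this factor. Your steps two and three are also inconsistent with each other: one uses $q^m\alpha$, the other $q^{N-m}\alpha$. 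The correct setup gives $S_k=\sum_m(-1)^m\Tr(\mathcal H^{(m)}\circ(\Frob_q^m)^k\mid\Omega^m)$ at the chain level. The paper then reads off the cohomological formula using only that $\mathcal H^{(m)}$ and $\Frob_q^m$ are commuting chain maps; no nondegeneracy is used.

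\textbf{Your fallback through nondegeneracy would fail.} For $f=x_1x_2+x_1x_2^{-1}$ and $\bd=(1,2)$, $f$ is nondegenerate but $G=y_{1,0}(y_{2,0}+y_{2,0}^{-1}+y_{2,1}+y_{2,1}^{-1})$ is not. In any case the statement has no nondegeneracy hypothesis, and meromorphy of $L$ says nothing about traces on $H^m$.
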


Even under non-degeneracy conditions, where the cohomology is acyclic and thus the $L$-function takes the form $L(\bd, f/\bb{F}_q, T)^{(-1)^{N+1}} = \det\nolimits_{\mathcal{H}^{(N)}} \! \left(I - T\Frob_q^N \ \Big|\ H^N(B, D)\right)$, this may not be a polynomial since the twisted Fredholm determinant is only known to be $p$-adic meromorphic. This leaves open what form a ``Newton over Hodge'' statement should be in this context.

We note that the ``twisted'' natural of these $L$-functions comes from the unfolding technique in Section \ref{S: unfolding}. We believe this technique is intrinsic to these partial exponential sums based on an earlier investigation \cite{MR4531526}, not an artifact of method. 

Now that we have a $p$-adic theory for partial $L$-functions, we may apply established $p$-adic tools in their study, although a slight modification will likely be needed due to the twist. To demonstrate, in Sections \ref{sec:unique_root} and \ref{sec:unit_root_formula} we prove the partial $L$-function has a unique $p$-adic unit root and use Adolphson and Sperber's theory to give a formula for the unit root in terms of $\mathcal{A}$-hypergeometric series:

\begin{theorem}\label{thm:main_unit_root}
The partial $L$-function $L(\bd, f/\bb{F}_q, T)^{(-1)^{n-1}}$ has a unique $p$-adic unit root $\lambda_0$ which is a 1-unit. Furthermore, there exists a $p$-adically analytic function $\mathcal{F}(\Lambda)$ which is a ratio of $\mathcal{A}$-hypergeometric series such that:
\[
\lambda_0 = \mathcal{F}(\hat c) \mathcal{F}(\hat c^p) \cdots \mathcal{F}(\hat c^{p^{a - 1}}).
\]
(See Theorem \ref{thm:main_unit_root_formula} for a precise statement.)
\end{theorem}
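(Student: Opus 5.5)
The plan is to prove the unit root statement by combining a Newton-polygon argument on the twisted Dwork trace formula with Adolphson--Sperber's description of the unit root eigenvector via $\mathcal{A}$-hypergeometric series.

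\textbf{Step 1: unfolding and the twisted trace formula.} Following Section~\ref{S: unfolding}, I would rewrite $S_k(f,\bd)$ as a classical toric sum. Write each $x_i \in \bb F_{q^{kd_i}}^*$ through its $d_i$ Frobenius conjugates. This realizes $S_k(f,\bd)$ as a sum over $(\bb F_{q^{k}}^*)^N$-type points of an unfolded Laurent polynomial $\tilde f$ in $N=\sum d_i$ variables, except that the Frobenius action is composed with the cyclic permutation $\sigma$ of the variable blocks. The twisted Dwork trace formula then reads
\[
S_k = (q^k-1)^N \Tr(\mathcal{H}\circ \alpha^k),
\]
where $\alpha$ is the Dwork operator on the Banach space attached to $\tilde f$ and $\mathcal{H}$ is induced by $\sigma$. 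Expanding $(q^k-1)^N$ binomially gives
\[
L(\bd,f/\bb F_q,T)^{(-1)^{N+1}} = \prod_{i=0}^{N} \det\nolimits_{\mathcal{H}}(I - q^i T\alpha)^{(-1)^{i}\binom{N}{i}},
\]
up to the normalization of signs.

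\textbf{Step 2: a unique unit root of the $i=0$ factor.} The key claim is that $\det_{\mathcal{H}}(I-T\alpha)$ has exactly one unit reciprocal root, and that this root is a $1$-unit. I would use the weight-graded monomial basis of the Banach space. With Dwork's splitting function, the matrix of $\alpha$ has entries divisible by $\pi^{(p-1)(\text{weight}(u)+\ldots)}$, so only the constant monomial $x^0$ contributes at slope $0$. The permutation $\mathcal{H}$ only permutes monomials and preserves weight, so the congruence of the twisted traces with $1$ modulo the maximal ideal survives: $\Tr(\mathcal{H}\alpha^k)\equiv 1$. This requires knowing the twisted determinant is a meromorphic function with a Newton polygon that can be read off. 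I would instead argue directly with the power series: $\Tr(\mathcal{H}\alpha^k) = \lambda_0^k + O(\text{positive slope})$, where $\lambda_0$ is the eigenvalue on the $1$-dimensional slope-zero quotient, which $\mathcal{H}$ fixes since it fixes $x^0$.

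\textbf{Step 3: other factors and reconciling exponents.} The factors with $i\ge 1$ carry $q^i$ and so contribute no units. The unit root therefore lies in $L^{(-1)^{N+1}}$ through the $i=0$ factor. The statement's exponent $(-1)^{n-1}$ must match this; I expect a subtlety here, since the unfolding may only introduce $n$ genuine torus factors and the remaining ones are absorbed by $\mathcal{H}$. I would recheck this by computing the $d_i=1$ case.

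\textbf{Step 4: the hypergeometric formula.} Following Adolphson--Sperber, I would deform the coefficients to $\Lambda$ and find the unique Frobenius-stable unit-root eigenvector as $\xi(\Lambda)=\sum_u G_u(\Lambda)x^u$ with $\mathcal{A}$-hypergeometric coefficients $G_u$. The eigenvalue of the $p$-th power Frobenius $\alpha_1$ is then $\mathcal{F}(\Lambda)=G_0(\Lambda)/G_0(\Lambda^p)$-type ratio. Since $\alpha=\alpha_1^{\sigma^{a-1}}\cdots\alpha_1$ and $\mathcal{H}$ fixes $x^0$, specializing at Teichmüller lifts $\hat c$ yields $\lambda_0=\prod_{j=0}^{a-1}\mathcal{F}(\hat c^{p^j})$.

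\textbf{Main obstacle.} The hardest step is the convergence and analytic continuation of $\mathcal{F}$ to the Teichmüller points. I would handle it by Dwork's congruences, working on the region where $G_0$ is a unit.
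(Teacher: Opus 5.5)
Your overall route is the paper's: twisted trace formula, an alternating product of twisted Fredholm determinants, the observation that only the $q$-free factor can carry a unit, Adolphson--Sperber's unique unit eigenvalue $\lambda_0$ of $\alpha_a$, trivial action of $\sigma$ on $V_{\lambda_0}$, and then Adolphson--Sperber's formula for $G$. However, Step 1 contains an error that changes the statement. The twisted Frobenius $y\mapsto\sigma^{-1}(y^{q^k})$ does not have $(q^k-1)^N$ fixed points in $(\overline{\F}_p^{*})^N$. Projection to $(y_{1,0},\dots,y_{n,0})$ is a bijection onto $\prod_i\F_{q^{kd_i}}^*$ (Lemma~\ref{lem:domain_bijection}). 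Character orthogonality on this group then gives $S_k(f,\bd)=\det(q^kI-P)\,\Tr(\sigma\circ\alpha_a^k\mid B)$ with $\det(q^kI-P)=\prod_{i=1}^n(q^{kd_i}-1)=\sum_I(-1)^{n-|I|}q^{kd_I}$. Hence $L^{(-1)^{n-1}}=\prod_{I\subseteq\{1,\dots,n\}}\det\nolimits_\sigma(I-q^{d_I}T\alpha_a\mid B)^{(-1)^{|I|}}$, with factors $q^{d_I}$ rather than your $q^i$ with binomial exponents, and this is why the unit root sits in $L^{(-1)^{n-1}}$. Your count would put it in $L^{(-1)^{N+1}}$, which is false whenever $N-n$ is odd. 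For example, take $n=1$, $d_1=2$, $f=x_1$. Then $S_k=-1$ for all $k$, so $L=1-T$, and the unit reciprocal root $1$ is a zero of $L=L^{(-1)^{n-1}}$, not of $L^{-1}=L^{(-1)^{N+1}}$. So the mismatch you anticipated in Step 3 is not a subtlety to recheck; it is exactly this miscount, and the correct factor resolves it.

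Second, the crux of Step 2 is not established by what you wrote: that $\sigma$ acts on the one-dimensional space $V_{\lambda_0}$ by $\zeta=1$. The congruence $\Tr(\sigma\circ\alpha_a^k)\equiv 1$ only gives $\zeta\equiv1$ modulo the maximal ideal. Since $\zeta$ is a $d$-th root of unity, this does not force $\zeta=1$ when $p\mid d$ (e.g.\ $p=2$, $\zeta=-1$). Likewise, ``$\sigma$ fixes $x^0$'' only helps once you know the unit eigenvector $\xi_0=\sum_u C_ue_u$ has $C_0\neq0$. The paper proves this from $A_{0,0}\equiv1$ and $A_{u,v}\equiv0\pmod{\gamma}$ for $u,v\neq0$. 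If $C_0=0$, normalize so that $\sup_{u\neq0}|C_u|_p=1$; then $\lambda_0C_u=\sum_{v\neq0}A_{u,v}C_v$ forces $|C_u|_p\le|\gamma|_p<1$ for all $u\neq0$, a contradiction. Weight invariance then gives $\sigma e_u=e_{Pu}$, so $\sigma e_0=e_0$, and comparing $e_0$-coefficients in $\sigma\xi_0=\zeta\xi_0$ yields $\zeta=1$ (Lemma~\ref{lem:twined_trace_unit}).

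Also, to speak of the roots of $L^{(-1)^{n-1}}$ and to separate $c_{\lambda_0}\lambda_0^k$ from the positive-slope part of $\sum_\lambda c_\lambda\lambda^k$, you need either integrality $c_\lambda=\Tr(\sigma\mid V_\lambda)\in\mathbb{Z}$ or the already established rationality of $L$. ``Arguing directly with the power series'' has to go through one of these. Step 4 is fine but heavier than needed: once $\lambda_0$ is identified as the unit eigenvalue of the classical $\alpha_a$ attached to $G$, the paper simply invokes Adolphson--Sperber's formula for $G$.
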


Partial $L$-functions and the related partial zeta functions are rather new, and so only a few papers have appeared in their study. Wan introduced them in \cite{MR1803946}, and proved rationality in \cite{MR2027782}. Fu and Wan continued their $\ell$-adic study in \cite{MR1960121, MR2030375}. The partial zeta function of Fermat hypersurfaces was studied in \cite{MR4318328}, and some $p$-adic results are given in \cite{MR4531526}.

\section{The unfolding polynomial $G$}\label{S: unfolding}

Let $\bb{F}_q$ be the finite field of $q = p^a$ elements of characteristic $p$, and let $f \in \bb{F}_q[x_1^{\pm}, \ldots, x_n^{\pm}]$ be a Laurent polynomial in $n$ variables. Set $N := \sum_{i=1}^n d_i$. We think of the variables $x_i$ as being ``folded'' inward into smaller subfields. To unfold them, guided by Wan \cite{MR2027782}, we introduce $N$ independent variables $y_{i, j}$, where $1 \le i \le n$ and $j \in \bb Z/d_i\bb Z$. For $v \in \bb Z^N$, we will use the multi-index notation $y^v = \prod y_{i, j}^{v_{i, j}}$ where the product runs over all $1 \le i \le n$ and $j \in \bb Z/ d_i \bb Z$. Define the shift operator $\sigma$ on the variables $y_{i, j}$ by
\[
\sigma(y_{i, j \bmod d_i}) := y_{i,(j+1) \bmod d_i}.
\]
For functions $\xi(y)$ in the variables $y$, define the action of $\sigma$ by $(\sigma \xi)(y) := \xi(\sigma y)$. It will be useful to have a matrix description of this action. Denote by $P$ the $N \times N$ permutation matrix acting on $v \in \bb Z^N$ by $(Pv)_{i,j} := v_{i, (j-1)\bmod d_i}$.

\begin{lemma} \label{lem:pullback_action}
$\sigma(y^v) = y^{Pv}$, and $\sigma$ has order $d = \lcm(d_1, \dots, d_n)$.
\end{lemma}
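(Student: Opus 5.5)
The plan is a direct computation of $\sigma(y^v)$ followed by an order count.

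For the first claim, start from $(\sigma\xi)(y)=\xi(\sigma y)$ with $\xi = y^v$. This gives
\[
\sigma(y^v) = \prod_{i,j} \sigma(y_{i,j})^{v_{i,j}} = \prod_{i,j} y_{i,j+1}^{v_{i,j}},
\]
where all second indices are read modulo $d_i$. Reindex the product by $j' = j+1 \bmod d_i$; this is a bijection of $\bb Z/d_i\bb Z$. The exponent of $y_{i,j'}$ is then $v_{i,j'-1}$, which is exactly $(Pv)_{i,j'}$ by the definition of $P$. Hence $\sigma(y^v)=y^{Pv}$. The only point needing care is the direction of the shift: $\sigma$ moves variables forward, so exponents move forward as well, and this agrees with $P$ reading index $j-1$.

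For the order, iterate the first claim to get $\sigma^k(y^v)=y^{P^k v}$, with $(P^k v)_{i,j}=v_{i,j-k \bmod d_i}$. On the variables themselves, $\sigma^k(y_{i,j}) = y_{i,j+k}$. Therefore $\sigma^k$ is the identity if and only if $k \equiv 0 \pmod{d_i}$ for every $i$, that is, if and only if $d \mid k$.

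\begin{itemize}
\item If $\sigma^k$ is the identity on functions, it fixes each monomial $y_{i,0}$. This forces $d_i \mid k$ for all $i$, and hence $d \mid k$.
\item Conversely, if $d \mid k$, then $\sigma^k$ fixes every variable, and therefore every function.
\end{itemize}

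Thus $\sigma$ has order exactly $\lcm(d_1,\dots,d_n)=d$. Equivalently, $P$ is a block-diagonal matrix of cyclic permutation matrices of sizes $d_i$, and the order of such a matrix is the lcm of its cycle lengths.

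No step is a real obstacle. The only thing to get right is the index convention relating $\sigma$ and $P$.
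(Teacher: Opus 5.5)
Your proposal is correct and follows the paper's proof: you make the same direct computation of $\sigma(y^v)$ and the same reindexing $j' = j+1 \bmod d_i$, which identifies the exponent of $y_{i,j'}$ as $(Pv)_{i,j'}$. For the order, you spell out what the paper calls immediate: $\sigma^k(y_{i,0}) = y_{i,k \bmod d_i}$ forces $d_i \mid k$ for every $i$, hence $d \mid k$, and conversely. That is a valid and slightly more complete justification.
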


\begin{proof}
While an easy exercise to prove, we give it here since it is used often. By definition, on a monomial $y^v$ we have
\[
\sigma(y^v) = \sigma \prod_{i=1}^n \prod_{j=0}^{d_i-1} y_{i,j}^{v_{i,j}} = \prod_{i=1}^n \prod_{j=0}^{d_i-1} (\sigma y_{i,j})^{v_{i,j}}  = \prod_{i=1}^n \prod_{j=0}^{d_i-1} y_{i, (j+1)\bmod d_i}^{v_{i,j}}.
\]
Re-indexing the inside product by setting $k = (j+1) \bmod d_i$, we obtain:
\[
\prod_{i=1}^n \prod_{k=0}^{d_i-1} y_{i, k}^{v_{i, (k-1)\bmod d_i}} = \prod_{i=1}^n \prod_{k=0}^{d_i-1} y_{i, k}^{(Pv)_{i, k}} = y^{Pv}.
\]
That $\sigma$ has order $d$ is immediate from $d = \lcm(d_1, \dots, d_n)$.
\end{proof}

Define the \emph{unfolded} Laurent polynomial $G(y)$ associated to $f$ by 
\[
G(y) := \sum_{l=0}^{d-1} f\big(y_{1, l \bmod d_1}, \dots, y_{n, l \bmod d_n}\big) \in \bb F_q[y_{i,j}^{\pm 1} \mid 1 \le i \le n, j \in \bb Z/ d_i \bb Z].
\]

\begin{lemma}\label{lem:G_shift_invariant}
The unfolded polynomial $G$ is invariant under $\sigma$: $\sigma G(y) = G(y)$.
\end{lemma}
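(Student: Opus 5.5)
The plan is a direct computation that reduces to re-indexing the defining sum of $G$. Applying $\sigma$ to $G$ substitutes $\sigma y$ for $y$. Since $f$ is a Laurent polynomial and $\sigma$ acts on functions by $(\sigma\xi)(y) = \xi(\sigma y)$, we can apply it term by term to get
\[
\sigma G(y) = \sum_{l=0}^{d-1} f\big(\sigma y_{1, l \bmod d_1}, \dots, \sigma y_{n, l \bmod d_n}\big) = \sum_{l=0}^{d-1} f\big(y_{1, (l+1) \bmod d_1}, \dots, y_{n, (l+1) \bmod d_n}\big).
\]
The one point to check here is that the coordinatewise definition $\sigma(y_{i,j}) = y_{i,j+1}$ extends to a ring automorphism of the Laurent polynomial ring. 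It does, because it is a permutation of the variables, so it commutes with evaluating $f$. Alternatively, one can argue monomial by monomial using Lemma~\ref{lem:pullback_action}.

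Next we reindex by $l' = l+1$. The key observation is that for each $i$, the residue $(l+1) \bmod d_i$ depends only on $(l+1) \bmod d$, because $d_i \mid d$. So the summand is a well-defined function of $l \in \bb Z/d\bb Z$. Writing $h(l) := f(y_{1, l \bmod d_1}, \dots, y_{n, l \bmod d_n})$, we have $G = \sum_{l \in \bb Z/d\bb Z} h(l)$ and $\sigma G = \sum_{l \in \bb Z/d\bb Z} h(l+1)$. Translation by $1$ is a bijection of $\bb Z/d\bb Z$, so the two sums agree, giving $\sigma G = G$.

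There is no real obstacle. The only point needing care is the well-definedness step: the term $l = d-1$ maps to $l' = d$, and one must confirm that $d \bmod d_i = 0 \bmod d_i$ for every $i$. This holds exactly because $d = \lcm(d_1,\dots,d_n)$, and it is what makes the shifted sum close up cyclically.
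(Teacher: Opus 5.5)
Your proposal is correct and is the paper's proof: substitute $\sigma y$, note that every second index shifts by one, and reindex with $l' = l+1$, which is valid because $d_i \mid d$. Treating the summand explicitly as a function on $\mathbb{Z}/d\mathbb{Z}$ just spells out the paper's one-line remark that the cyclic sum is unchanged under the shift.
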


\begin{proof}
Calculating,
\begin{align*}
G(\sigma y) &= \sum_{l=0}^{d-1} f\Big((\sigma y)_{1, l \bmod d_1}, \dots, (\sigma y)_{n, l \bmod d_n}\Big) \\
&= \sum_{l=0}^{d-1} f\Big(y_{1, (l+1) \bmod d_1}, \dots, y_{n, (l+1) \bmod d_n}\Big).
\end{align*}
Since $d$ is a multiple of every $d_i$, the total sum remains unchanged if we shift the index by setting $l' := l+1$. Thus, $G(\sigma y) = G(y)$.
\end{proof}

Let $\Delta := \Delta(G) \subseteq \bb R^N$ be the Newton polytope of $G(y)$ at infinity, which is defined as the convex hull in $\bb R^N$ of the origin and the vectors in the support of $G$. Define the monoid $M := \bb R_{\ge 0} \Delta \cap \bb Z^N$. Let $w: M \to \bb Q_{\ge 0}$ be the associated polyhedral weight function defined as follows. For $u \in M(\Delta)$, $w(u)$ is the smallest dilation $\delta$ such that $u \in \delta \Delta$:
\[
w(u) := \inf \{ \delta \mid u \in \delta \Delta \}.
\]
The weight function satisfies: 
\begin{enumerate}
\item $w(M) = \frac{1}{D} \bb Z_{\geq 0}$ for some positive integer $D$.
\item $w(u) = 0$ if and only if $u = 0$. 
\item For $c \in \bb Q_{\ge 0}$, $w(cu) = cw(u)$.
\item $w(u+v) \le w(u) + w(v)$ with equality if and only if $u$ and $v$ are cofacial on $\Delta$.
\end{enumerate}

The unfolded polynomial $G$ is highly symmetric, and so $\Delta$ is highly symmetric. As a result, the weight function is invariant by the action of the permutation $P$ associated to $\sigma$:

\begin{lemma}\label{lem:weight_invariant}
For all $u \in M$, $w(Pu) = w(u)$.
\end{lemma}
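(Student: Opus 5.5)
The plan is to show that the linear map $P$ maps the Newton polytope $\Delta$ onto itself. Since $w$ is defined purely in terms of dilates of $\Delta$, the invariance of $w$ then follows at once.

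\emph{Step 1: $P$ preserves the support of $G$.} By Lemma~\ref{lem:pullback_action}, $\sigma(y^v) = y^{Pv}$. Write $G = \sum_{v \in \supp G} c_v y^v$. Then
\[
\sigma G = \sum_{v} c_v y^{Pv} = \sum_{u} c_{P^{-1}u}\, y^{u}.
\]
Lemma~\ref{lem:G_shift_invariant} gives $\sigma G = G$. Comparing coefficients yields $c_{P^{-1}u} = c_u$ for every $u$. Hence $u \in \supp G$ if and only if $P^{-1}u \in \supp G$, i.e.\ $P(\supp G) = \supp G$, using that $P$ is a bijection of $\bb Z^N$.

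\emph{Step 2: $P$ preserves $\Delta$ and its dilates.} The matrix $P$ is linear and fixes the origin, so
\[
P(\Delta) = P\big(\mathrm{conv}(\{0\} \cup \supp G)\big) = \mathrm{conv}(\{0\} \cup P\supp G) = \Delta .
\]
It follows that $P(\delta \Delta) = \delta \Delta$ for every $\delta \ge 0$, and likewise $P(\bb R_{\ge 0}\Delta) = \bb R_{\ge 0}\Delta$. Since $P$ also permutes $\bb Z^N$, it maps $M$ bijectively to itself. In particular $Pu \in M$ whenever $u \in M$, so $w(Pu)$ is defined.

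\emph{Step 3: conclude.} Because $P$ is invertible and preserves each $\delta\Delta$, we have $u \in \delta\Delta$ if and only if $Pu \in \delta\Delta$. The sets $\{\delta : u \in \delta\Delta\}$ and $\{\delta : Pu \in \delta\Delta\}$ therefore coincide, and taking infima gives $w(Pu) = w(u)$.

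There is no real obstacle here. The only point needing care is Step 1: one must compare coefficients correctly, using that $P$ is a bijection on exponent vectors so that no cancellation can occur between monomials.
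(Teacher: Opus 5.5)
Your proof is correct and follows the same route as the paper: $\sigma$-invariance of $G$ gives $P(\supp G) = \supp G$, hence $P(\delta\Delta) = \delta\Delta$ for every $\delta \ge 0$, so the infimum defining $w$ is unchanged. Your explicit coefficient comparison in Step 1 and your check that $P(M) = M$ (so that $w(Pu)$ is defined) are details the paper leaves implicit.
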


\begin{proof}
By Lemma \ref{lem:G_shift_invariant}, $G(\sigma y) = G(y)$ and so the support of $G$ is invariant under the action by $P$. Consequently, the Newton polytope $\Delta(G) = \text{Convex}(\{0\} \cup \text{supp}(G))$ satisfies $P(\Delta) = \Delta$. For any $u \in M$ and $\delta \ge 0$,
\[
u \in \delta\Delta \iff Pu \in P(\delta \Delta) = \delta \Delta(G),
\]
and so $w(Pu) = w(u)$.
\end{proof}

\section{The $p$-adic Banach Space and Dwork's Frobenius}\label{S: Banach}

Let $\bb Q_q$ be the unramified extension of $\bb Q_p$ of degree $a$, and let $\bb Z_q$ be its ring of integers. By the theory of Newton polygons, the series $\sum_{i \ge 0} t^{p^i}/p^i$ has a root $\gamma \in \overline{\bb Q}_p$ satisfying $\ord_p \gamma = 1/(p-1)$. Fix a $D$-th root $\gamma^{1/D}$ of $\gamma$, and set $K := \bb Q_q(\gamma^{1/D})$. Define the $p$-adic Banach space $B$ over $K$ with orthonormal basis $\{ \gamma^{w(u)} y^u \}_{u \in M}$ as:
\[
B := \left\{ \sum_{u \in M} A_u \gamma^{w(u)} y^u \;\middle|\; A_u \in K \text{ and }  |A_u|_p \rightarrow 0 \text{ as } w(u) \to \infty \right\}.
\]

Let $E(t) = \exp(\sum_{i \ge 0} t^{p^i}/p^i)$ be the Artin-Hasse exponential, and define Dwork's splitting function $\theta(t) := E(\gamma t)$. It is well-known that $\theta(t) = \sum_{i=0}^{\infty}\theta_it^i$ has coefficients which satisfy $\mathrm{ord}_p(\theta_i) \geq \frac{i}{p-1}$. 

Writing $f(x) = \sum_{u \in \supp(f)} \bar c_u x^u$ with coefficients $\bar c_u \in \bb F_q^*$, let $c_u \in \bb Z_q$ be their Teichm\"uller lifts. Dwork's splitting function is a $p$-adic analytic lifting of the character $\zeta_p^{\Tr_{\bb F_q / \bb F_p}( \cdot)}$. That is, for $\bar z \in \bb F_q^*$ and $\hat z = \Teich(\bar z) \in \bb Z_q$ its Teichm\"uller lift,
\[
\zeta_p^{\Tr_{\bb F_q / \bb F_p}(\bar z)} = \zeta_p^{\bar z + \bar z^p + \cdots + \bar z^{p^{a-1}} } = \theta(\hat z) \theta(\hat z^p) \cdots \theta(\hat z^{p^{a-1}}).
\]
By definition, $G(y) = \sum_{l = 0}^{d-1} \sum_{u \in \supp(f)} \bar c_u \prod_{i=1}^n y_{i, l \bmod d_i}^{u_i}$. Define
\[
F(y) := \prod_{l=0}^{d-1} \prod_{u \in \supp(f)} \theta\!\left( c_u \prod_{i=1}^n y_{i, l \bmod d_i}^{u_i} \right).
\]
Let $\tau \in \Gal(\bb Q_q / \bb Q_p)$ be the generator of the cyclic Galois group which acts on Teichm\"ullers by $\tau(c) = c^p$. Define
\begin{align*}
F_a(y) &:= F(y) F^\tau(y^p) \cdots F^{\tau^{a-1}}(y^{p^{a-1}}) \\
&= \prod_{m=0}^{a-1} \prod_{l=0}^{d-1} \prod_{u \in \supp(f)} \theta\!\left( c_u^{\,p^m} \prod_{i=1}^n y_{i, l \bmod d_i}^{p^m u_i} \right).
\end{align*}

The same argument as in Lemma \ref{lem:G_shift_invariant} proves that $F$ and $F_a$ are invariant under $\sigma$:

\begin{lemma}\label{L: F invariant}
$\sigma F(y) = F(y)$ and $\sigma F_a(y) = F_a(y)$
\end{lemma}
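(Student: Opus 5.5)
The plan is to mirror the proof of Lemma \ref{lem:G_shift_invariant}, working with the product formula for $F$ and $F_a$ instead of the sum formula for $G$. First we check that $\sigma$ commutes with everything other than the variables. The action $(\sigma\xi)(y) = \xi(\sigma y)$ only substitutes variables, so it leaves the coefficients $c_u \in \bb Z_q$ untouched. The series $\theta$ has coefficients in $\bb Q_p(\gamma)$, and substitution respects products and composition with $\theta$. Hence applying $\sigma$ to each factor gives
\[
\sigma F(y) = \prod_{l=0}^{d-1} \prod_{u \in \supp(f)} \theta\!\left( c_u \prod_{i=1}^n y_{i, (l+1) \bmod d_i}^{u_i} \right),
\]
because $(\sigma y)_{i, l \bmod d_i} = y_{i,(l+1)\bmod d_i}$.

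Next, we reindex the outer product. Set $l' = l+1$, taken modulo $d$. Since $d_i \mid d$ for every $i$, we have $(l+1) \bmod d_i = (l' \bmod d) \bmod d_i$. So the index $l'$ runs over $\bb Z/d\bb Z$ exactly once, and the factor with $l = d-1$ becomes the factor with $l' = 0$. The finite product is therefore unchanged, and $\sigma F = F$.

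For $F_a$ the same argument applies term by term in $m$. Applying $\sigma$ to $\prod_{i} y_{i, l \bmod d_i}^{p^m u_i}$ again replaces $l$ by $l+1$ in the second subscript. The coefficients $c_u^{p^m}$ are constants and are not affected. Reindexing $l$ modulo $d$ then gives $\sigma F_a = F_a$.

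A second route to the same conclusion is available. Write $F_a(y) = F(y) F^\tau(y^p)\cdots$, and observe that $\sigma$ commutes both with $y \mapsto y^{p^m}$ and with the coefficient action of $\tau$. Each factor is then invariant by the first part.

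The only point needing care, rather than a real obstacle, is that these are infinite power series, so one must justify applying $\sigma$ factorwise. Two facts settle this. First, $\sigma$ is a monomial-permuting substitution $y^v \mapsto y^{Pv}$ (Lemma \ref{lem:pullback_action}). Second, each factor $\theta(\cdot)$ is a formal power series in a single monomial. Consequently $\sigma$ acts as a ring automorphism of the relevant ring of formal series, and it commutes with the finite product.
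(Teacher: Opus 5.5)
Your proof is correct and matches the paper, which says only that the reindexing $l \mapsto l+1 \pmod d$ from Lemma \ref{lem:G_shift_invariant} (valid because each $d_i \mid d$) carries over verbatim to the products defining $F$ and $F_a$. One small wording point: when the cone over $\Delta$ contains a line (e.g.\ $f = x + x^{-1}$), the product of the $\theta$-factors is a $p$-adically convergent product rather than a formal one, and $\sigma$ respects it because it just permutes the exponents in each convolution sum $\sum_{u+v=r}$ via $P$.
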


Define the operator $\psi_p$ on $B$ by its action on monomials: $\psi_p(y^u) = y^{u/p}$ if $p$ divides every coordinate of $u$, and $0$ otherwise. We define Dwork's Frobenius operator on $B$ by
\[
\alpha := \tau^{-1} \circ \psi_p \circ F
\]
where $F$ is the map defined via multiplication by the series $F(y)$. Define $\psi_q := \psi_p^a$ and $\alpha_a := \psi_q \circ F_a(y)$.

In order to show that $\alpha$ and $\alpha_a$ are well-defined operators on $B$, we need the following estimate. Write $F(y) = \sum_{u \in M} B_u y^u$. Set $\ord_\gamma := (p-1) \ord_p$.

\begin{lemma}\label{L: F estimates}
For all $u \in M$, $\ord_\gamma (B_u) \ge w(u)$.
\end{lemma}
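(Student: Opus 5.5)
Expanding $F(y)$ as a product of the one-variable series $\theta$, I will write
\[
F(y) = \prod_{l=0}^{d-1}\prod_{u\in\supp(f)} \sum_{k\ge 0} \theta_k\, c_u^{k}\, y^{k\,e(l,u)},
\]
where $e(l,u)\in\bb Z^N$ denotes the exponent vector of the monomial $\prod_{i=1}^n y_{i,l\bmod d_i}^{u_i}$. By construction of $G$, each such $e(l,u)$ lies in $\supp(G)\subseteq\Delta$; the only caveat is cancellation in $G$, which is addressed below. Hence $w(e(l,u))\le 1$. Multiplying out, every coefficient $B_v$ is a sum of products of the form
\[
\prod_{l,u}\theta_{k_{l,u}}\, c_u^{k_{l,u}}, \qquad \text{with}\quad v=\sum_{l,u} k_{l,u}\, e(l,u).
\]
Since the $c_u$ are Teichmüller lifts, they are units. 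Using $\ord_p\theta_k\ge k/(p-1)$, i.e.\ $\ord_\gamma\theta_k\ge k$, each such product has $\ord_\gamma\ge\sum k_{l,u}$. The ultrametric inequality then gives $\ord_\gamma B_v\ge\min\sum k_{l,u}$, the minimum taken over all representations of $v$.

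It remains to show $w(v)\le\sum k_{l,u}$ for every representation. Subadditivity and homogeneity of $w$ (properties 3 and 4) give
\[
w(v)\le\sum_{l,u} k_{l,u}\, w(e(l,u))\le\sum_{l,u} k_{l,u},
\]
because each $e(l,u)\in\Delta$ has weight at most $1$. This yields $\ord_\gamma(B_v)\ge w(v)$. The exponents $v$ that occur lie in the monoid generated by the $e(l,u)$, which sits inside $M$. Convergence of the infinite product as a formal series is clear, since for each $v$ there are only finitely many $(k_{l,u})$ with $\sum k_{l,u}e(l,u)=v$ and $\sum k_{l,u}$ bounded. I will also note that the bound is compatible with the ultrametric estimate for the possibly infinite sum.

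The one point needing care, and the step I expect to be the main obstacle, is the claim that each $e(l,u)$ lies in $\Delta$. The polynomial $G$ could have cancellations: distinct pairs $(l,u)$ might give the same monomial, with coefficients summing to zero in $\bb F_q$, so that $e(l,u)\notin\supp(G)$. I will check that the map $(l,u)\mapsto e(l,u)$ is injective. The monomial $e(l,u)$ records $u_i$ in position $(i,l\bmod d_i)$. If $e(l,u)=e(l',u')$ with $u$ nonzero in some coordinate $i$, then $l\equiv l'\pmod{d_i}$ and $u_i=u'_i$, but this need not force $l\equiv l'\pmod d$ when $u$ vanishes in other coordinates, so injectivity can fail. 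In that case I would either interpret $\Delta$ as the convex hull of all the $e(l,u)$, which is how Wan's setup intends it, or argue directly that such coincident terms have the same coefficient $\bar c_u$. In the latter case the sum over $l$ in a common class gives a multiple $m\bar c_u$ of $\bar c_u$ with $m\mid d$, and this could vanish mod $p$. I will try first to show that the terms coinciding with a given $e(l,u)$ number exactly $d/\lcm\{d_i: u_i\neq0\}$, and then handle the case $p\mid$ this count by enlarging $\Delta$ to include these exponent vectors, noting that this is harmless for the rest of the paper.
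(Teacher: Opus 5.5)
Your core argument is the paper's proof: it also expands $F$ into products $\theta_{i_1}\cdots\theta_{i_k}c_1^{i_1}\cdots c_k^{i_k}$, uses $\ord_\gamma\theta_i\ge i$, and concludes via $\sum_m i_m\ge\sum_m i_m w(v^{(m)})\ge w\bigl(\sum_m i_m v^{(m)}\bigr)$, which needs $w(v^{(m)})\le 1$, i.e.\ every exponent vector of a factor must lie in $\Delta$.

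Your caveat is genuine, and the paper's proof silently skips it by writing $F=\prod_m\theta(c_m y^{v^{(m)}})$ over ``the terms of $G$''. Your count $d/\lcm\{d_i:u_i\neq 0\}$ is correct. When $p$ divides it, the estimate with $\Delta=\Delta(G)$ really can fail: for $f=x_1x_2^2+x_2^{-1}+x_1$ and $\bd=(1,p)$, the coefficient of $y_{1,0}^{p}$ in $F$ has $\ord_\gamma=p$, while its weight is $3p$. So a repair is necessary, not just cautious. Enlarging $\Delta$ works for this lemma and the trace formula, but it is not quite harmless: it detaches $\Delta$ from the Newton polytope of $G$ used for non-degeneracy in Section 8. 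The cleaner fix is to build $F$ from the actual monomials of $G$, as the paper's proof literally writes, which still splits the character sum.
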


\begin{proof}
To ease notation, let the terms of the unfolded polynomial $G(y)$ be indexed by $m = 1, \dots, k$. That is, write $G(y) = \sum_{m=1}^k \bar c_m y^{v^{(m)}}$. Then,
\begin{align*}
F(y) &= \prod_{m=1}^k \theta(c_m y^{v^{(m)}}) \\
&= \sum_{i_1,\dots,i_k \ge 0} \theta_{i_1}\dots\theta_{i_k} c_1^{i_1}\dots c_k^{i_k} y^{i_1 v^{(1)} + \dots + i_k v^{(k)}} \\
&= \sum_{u \in M} B_u y^u
\end{align*}
where
\[
B_u := \sum_{i_1 v^{(1)} + \dots + i_k v^{(k)} = u} \theta_{i_1}\dots\theta_{i_k} c_1^{i_1}\dots c_k^{i_k}.
\]
By the properties of the weight function, and since $\ord_\gamma(\theta_i) \ge i$, we have
\begin{align*}
\ord_\gamma(\theta_{i_1}\dots\theta_{i_k} c_1^{i_1}\dots c_k^{i_k} ) &\ge i_1 w(v^{(1)}) + \dots + i_k w(v^{(k)}) \\
&\geq w(i_1 v^{(1)} + \dots + i_k v^{(k)}) \\
&= w(u).
\end{align*}
The lemma follows.
\end{proof}

We may now show that $\alpha$ and $\alpha_a$ are well-defined operators of $B$.

\begin{proposition}
The operator $\alpha := \tau^{-1} \circ \psi_p \circ F$ is a well-defined operator on the Banach space $B$.
\end{proposition}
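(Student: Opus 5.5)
The plan is to compute $\alpha$ on the orthonormal basis $\{\gamma^{w(u)}y^u\}_{u\in M}$ and show two things: the images have norm at most $1$, and the matrix coefficients tend to $0$. Together these give a well-defined bounded (indeed completely continuous) operator on $B$.

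First, multiplication by $F(y)$ maps $B$ into a larger space of series. For $\xi=\sum_{v\in M}A_v\gamma^{w(v)}y^v\in B$ we have
\[
F(y)\,\xi=\sum_{u\in M}\Big(\sum_{v+v'=u}B_{v'}A_v\gamma^{w(v)}\Big)y^u .
\]
Each inner sum converges. By Lemma \ref{L: F estimates}, $\ord_\gamma(B_{v'})\ge w(v')$. Also $w(v)\to\infty$ forces $|A_v|\to 0$, and only finitely many $v\in M$ have bounded weight, because $w(M)=\frac1D\bb Z_{\ge0}$ and the sets $\delta\Delta\cap\bb Z^N$ are finite. Note that $M$ is closed under addition, since it is a cone intersected with a lattice. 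So the product lies in the space of series supported on $M$.

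Second, apply $\psi_p$. The coefficient of $y^u$ in $\psi_p(F\xi)$ is the coefficient of $y^{pu}$ in $F\xi$, namely $\sum_{v+v'=pu}B_{v'}A_v\gamma^{w(v)}$. If $pu\in M$ then $u\in M$, since $M$ is a saturated cone intersection. Subadditivity and homogeneity of $w$ give
\[
w(v')\ge w(pu)-w(v)=p\,w(u)-w(v).
\]
Hence every term has $\ord_\gamma\ge \ord_\gamma(A_v)+p\,w(u)$. Rewriting in the basis $\gamma^{w(u)}y^u$, the new coefficient $A'_u$ satisfies
\[
\ord_\gamma(A'_u)\ge (p-1)\,w(u)+\min_v \ord_\gamma(A_v).
\]
So $|A'_u|\to0$ as $w(u)\to\infty$, with room to spare, and $\psi_p\circ F$ maps $B$ to $B$ with operator norm at most $1$. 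Applying $\tau^{-1}$ coefficientwise preserves absolute values. Here one checks that $\tau^{-1}$ is extended to $K$ fixing $\gamma^{1/D}$, which is legitimate because $\bb Q_q$ and $\bb Q_p(\gamma^{1/D})$ are linearly disjoint: the former is unramified and the latter totally ramified. Therefore $\alpha$ maps $B$ to $B$. Linearity and continuity follow from the norm bound.

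The only delicate points are convergence of the infinite sums defining the coefficients and the implication $pu\in M\Rightarrow u\in M$. Both are handled by the finiteness of lattice points of bounded weight and by the definition of $M$ as $\bb R_{\ge0}\Delta\cap\bb Z^N$. The key estimate is the subadditivity inequality combined with Lemma \ref{L: F estimates}, which yields the gain $\gamma^{(p-1)w(u)}$.
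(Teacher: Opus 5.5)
Your proof is correct and follows essentially the same route as the paper. Lemma~\ref{L: F estimates}, together with subadditivity and homogeneity of $w$, bounds $\ord_\gamma$ of the coefficient of $y^{pu}$ in $F\xi$ below by $p\,w(u)$ plus a constant. After $\psi_p$ and renormalizing by $\gamma^{-w(u)}$ this gives the gain $(p-1)w(u)$, and $\tau^{-1}$ preserves valuations.

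Your extra checks make explicit points the paper passes over: convergence of the possibly infinite inner sums, the implication $pu\in M \Rightarrow u\in M$, and extending $\tau$ to $K$ fixing $\gamma^{1/D}$. One small slip: $\alpha$ is $\tau^{-1}$-semilinear over $K$ rather than linear, which does not affect well-definedness.
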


\begin{proof}
Let $\xi(y) = \sum_{u \in M} C_u y^u \in B$. By the definition of $B$, $\ord_\gamma(C_u) \ge w(u) + c_u$, where $c_u \to \infty$ as $w(u) \to \infty$. Set $c := \inf\{ c_u \}$. By Lemma \ref{L: F estimates}, $F(y) = \sum_{v \in M} B_v y^v$ with $\ord_\gamma(B_v) \ge w(v)$. Consider the product $\xi(y) F(y) = \sum_{r \in M} D_r y^r$ where $D_r := \sum_{u+v=r} C_u B_v$. Note that
\[
\ord_\gamma C_u B_v \ge w(u) + c_u + w(v) \geq w(u+v) + c = w(r) + c.
\]
Thus, $\ord_\gamma D_r \geq w(r) + c$.

Now, 
\[
\alpha(\xi) = \tau^{-1} \circ \psi_p \sum_{r \in M} D_r y^r = \sum_{r \in M} \tau^{-1}(D_{pr}) y^r = \sum_{r \in M} E_r \gamma^{w(r)} y^r,
\]
where $E_r := \tau^{-1}(D_{pr}) \gamma^{-w(r)}$. We are left with checking that $E_r \rightarrow 0$ as $w(r) \rightarrow \infty$.

Since the $p$-adic valuation is invariant under the action of $\tau$, we may ignore it. Thus, 
\[
\ord_\gamma E_r = \ord_\gamma D_{pr} - w(r) \ge w(pr) + c - w(r) = (p-1)w(r) + c.
\]
This proves the proposition.
\end{proof}

Observe that $\alpha_a=\alpha^a$ since
\begin{align*}
\alpha_a &= \psi_q \circ F_a \\
& = \psi_p^a \circ F^{\tau^{a-1}}(y^{p^{a-1}})\cdots F^{\tau}(y^{p}) F(y)\\
&=(\tau^{-1} \circ \psi_p \circ F(y)) \circ \cdots \circ(\tau^{-1} \circ \psi_p \circ F(y)) \\
&=\alpha^a.
\end{align*}
Consequently, $\alpha_a$ is a well-defined completely continuous operator on $B$. 

Next, we linearly extend $\sigma$ to act on elements of $B$.

\begin{lemma}\label{lem:g_commutes_alpha}
As operators on $B$,  $\sigma \circ \alpha_a = \alpha_a \circ \sigma$.
\end{lemma}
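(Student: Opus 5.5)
We show that $\sigma$ commutes separately with each of the two factors of $\alpha_a = \psi_q \circ F_a$: multiplication by $F_a(y)$, and the operator $\psi_q$. Composing the two commutation relations then gives the lemma. Throughout, $\sigma$ acts on $B$ by $\sigma\big(\sum_u A_u y^u\big) = \sum_u A_u y^{Pu}$, which is the linear extension of Lemma~\ref{lem:pullback_action}.

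First, $\sigma$ is a well-defined isometry of $B$. By Lemma~\ref{lem:weight_invariant} we have $w(Pu)=w(u)$, so $\sigma$ sends the orthonormal basis element $\gamma^{w(u)}y^u$ to the orthonormal basis element $\gamma^{w(Pu)}y^{Pu}$. Moreover $P$ permutes $M$, since $P\Delta=\Delta$ was shown in the proof of Lemma~\ref{lem:weight_invariant}. Hence $\sigma$ maps $B$ to $B$ and preserves the convergence condition on coefficients.

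Next, $\sigma$ is multiplicative on products of series. On monomials this is the identity $y^{P(u+v)} = y^{Pu}y^{Pv}$, and the general case follows because the coefficient estimates from the proof that $\alpha$ is well defined give the needed convergence. Therefore, for $\xi \in B$,
\[
\sigma(F_a \xi) = (\sigma F_a)(\sigma \xi) = F_a \,\sigma\xi,
\]
where the second equality is Lemma~\ref{L: F invariant}.

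For $\psi_q$ it suffices to check monomials, since both sides are continuous and linear. Because $P$ is a permutation matrix, it permutes coordinates, so $q$ divides every coordinate of $u$ if and only if $q$ divides every coordinate of $Pu$. If it does, then $P(u/q) = Pu/q$, and so
\[
\sigma\psi_q(y^u) = y^{P(u/q)} = y^{Pu/q} = \psi_q(y^{Pu}) = \psi_q\sigma(y^u).
\]
If it does not, both sides are $0$.

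Combining the two steps, $\sigma\alpha_a = \sigma\psi_q F_a = \psi_q \sigma F_a = \psi_q F_a \sigma = \alpha_a\sigma$. No Frobenius twist $\tau$ enters here, because $\alpha_a = \psi_q \circ F_a$ involves no $\tau$; and even if it did, $\sigma$ acts only on the monomials while $\tau$ acts only on the coefficients, so the two would commute.

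The only point requiring care is justifying the multiplicativity of $\sigma$ for infinite series. This reduces to the observation that $\sigma$ merely reindexes the coefficient sums $D_r = \sum_{u+v=r} C_u B_v$ bijectively via $P$. Because $P$ is a bijection on $M$, these sums converge absolutely under the estimates already established in the proof that $\alpha$ is well defined.
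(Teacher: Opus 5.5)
Your proposal is correct and follows the paper's proof: you show that $\sigma$ commutes with multiplication by $F_a$ (via Lemma~\ref{L: F invariant}) and with $\psi_q$ (since the permutation matrix $P$ preserves divisibility of all coordinates by $q$), then compose. Your additional points fill in details the paper leaves implicit: $\sigma$ is an isometry of $B$ by Lemma~\ref{lem:weight_invariant}, and $\sigma$ is multiplicative on the formal product $F_a\xi$. The estimates you cite for multiplicativity are those for $F$ rather than $F_a$, but this does not matter, because the bijective reindexing of the coefficient sums works either way.
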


\begin{proof}
We will show $\sigma$ commutes with $\psi_q$ and $F_a(y)$, and so it commutes with $\alpha_a$. Let $\xi \in B$. By Lemma \ref{L: F invariant},
\[
\sigma(F_a(y) \cdot \xi(y)) = F_a(\sigma y)\xi(\sigma y) = F_a(y) \sigma( \xi(y)),
\]
thus $\sigma$ commutes with multiplication by $F_a(y)$. Next, we need only check commutativity with $\psi_q$ on the basis monomials. By Lemma \ref{lem:pullback_action}, $\sigma(y^u) = y^{Pu}$. Since $P$ is a permutation matrix, $q \mid u$ if and only if $q \mid Pu$. Thus,
\[
\sigma\big(\psi_q(y^u)\big) = \sigma(y^{u/q}) = y^{P(u/q)} = y^{(Pu)/q} = \psi_q(y^{Pu}) = \psi_q\big(\sigma(y^u)\big).
\]
\end{proof}

\section{The twisted Dwork trace formula}\label{sec:trace_formula}

Let $k \ge 1$, and let $b$ be an integer coprime to $d = \lcm(d_1, \dots, d_n)$. Define the fixed point set
\[
W_k^{(b)} := \big\{ y \in (\overline{\bb F}_p^*)^N \;\big|\; \sigma^{-b}(y^{q^k}) = y \big\}.
\]
Define the map $\rho: W_k^{(b)} \rightarrow \bb F_{q^{k d_1}}^*\times \cdots \times \bb F_{q^{k d_n}}^*$ given by the projection onto the zero-th coordinates $\rho(y) := (y_{1,0}, \dots, y_{n,0})$. The next lemma shows $\rho$ is well-defined and a bijection.

\begin{lemma}\label{lem:domain_bijection}
For $\gcd(b, d) = 1$, the map $\rho$ is well-defined and a bijection. Hence,
\[
\big|W_k^{(b)}\big| = \prod_{i=1}^n (q^{k d_i} - 1) = \det(q^k I - P^b).
\]
\end{lemma}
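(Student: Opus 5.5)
Let $Q := q^k$. The plan is to write the fixed-point condition coordinatewise and see that it determines every coordinate from $y_{i,0}$. Since $\sigma$ acts on a point by $(\sigma y)_{i,j} = y_{i,j+1}$, we get $(\sigma^{-b} z)_{i,j} = z_{i,j-b}$. So $y \in W_k^{(b)}$ if and only if
\[
y_{i,j-b}^{Q} = y_{i,j} \qquad \text{for all } i \text{ and all } j \in \bb Z/d_i\bb Z,
\]
equivalently $y_{i,j+b} = y_{i,j}^{Q}$. Fix $i$. Because $\gcd(b,d)=1$ and $d_i \mid d$, we have $\gcd(b,d_i)=1$, so $b$ generates $\bb Z/d_i\bb Z$. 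Iterating from $j=0$ gives $y_{i,tb} = y_{i,0}^{Q^t}$ for all $t \ge 0$, and the indices $tb \bmod d_i$ for $t = 0,\dots,d_i-1$ run over all of $\bb Z/d_i\bb Z$. Going once around the cycle ($t = d_i$) returns to index $0$ and forces $y_{i,0}^{Q^{d_i}} = y_{i,0}$, i.e.\ $y_{i,0} \in \bb F_{q^{kd_i}}^*$. This shows $\rho$ is well-defined.

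For injectivity, the formula $y_{i,tb} = y_{i,0}^{Q^t}$ shows that $y$ is determined by $\rho(y)$. For surjectivity, given $(x_1,\dots,x_n)$ with $x_i \in \bb F_{q^{kd_i}}^*$, define $y_{i,tb \bmod d_i} := x_i^{Q^t}$. The only point needing care is that this is well defined. If $tb \equiv t'b \pmod{d_i}$, then $t \equiv t' \pmod{d_i}$ since $b$ is invertible mod $d_i$, and $x_i^{Q^{d_i}} = x_i$ gives $x_i^{Q^t} = x_i^{Q^{t'}}$. The relation $y_{i,j+b} = y_{i,j}^{Q}$ then holds by construction, and all coordinates are nonzero. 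Hence $\rho$ is a bijection and $|W_k^{(b)}| = \prod_i (q^{kd_i}-1)$.

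For the determinant identity, note that $P$ is block diagonal with one block $P_i$ for each $i$, where $P_i$ is the cyclic shift on $\bb Z/d_i\bb Z$. Then $P_i^b$ is again a single $d_i$-cycle, because $b$ is coprime to $d_i$, so its characteristic polynomial is $\det(XI - P_i^b) = X^{d_i} - 1$. Taking $X = q^k$ and multiplying over the blocks gives $\det(q^kI - P^b) = \prod_i (q^{kd_i}-1)$.

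The main obstacle is only bookkeeping: getting the direction of the index shift under $\sigma^{-b}$ right. A sign error there would not change the argument, since $\pm b$ both generate $\bb Z/d_i\bb Z$.
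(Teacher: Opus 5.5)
Your proof is correct and follows essentially the same route as the paper: you write the coordinatewise recurrence $y_{i,j+b} = y_{i,j}^{q^k}$, iterate along the orbit of $b$ in $\mathbb{Z}/d_i\mathbb{Z}$ to get that $\rho$ is well-defined and bijective, and use the block-diagonal decomposition of $P$ for the determinant. Your version is slightly more careful in two places. You check that the inverse construction is consistent around the cycle (using $x_i^{q^{kd_i}} = x_i$). You also justify $\det(XI - P_i^b) = X^{d_i}-1$ by noting that $P_i^b$ is a single $d_i$-cycle, whereas the paper only says $P_i^b$ has order $d_i$, which alone would not determine the characteristic polynomial.
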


\begin{proof}
For $y \in W_k^{(b)}$, the coordinate version of $y^{q^k} = \sigma^b(y)$ is the recurrence relation:
\begin{equation} \label{eq:recurrence}
y_{i, j}^{q^k} = y_{i, (j + b) \bmod d_i}.
\end{equation}
Starting at the zero-th coordinate $j=0$ and iterating the recurrence relation, we find $y_{i, b} = y_{i,0}^{q^k}$, then $y_{i, 2b} = y_{i,b}^{q^k} = y_{i,0}^{q^{2k}}$, and so:
\begin{equation}\label{E: recur}
y_{i, mb \bmod d_i} = y_{i,0}^{q^{mk}} \qquad \text{for all } m \ge 0.
\end{equation}
Taking $m = d_i$ yields
\[
y_{i,0} = y_{i,0}^{q^{k d_i}},
\]
and so $y_{i, 0} \in \bb F_{q^{k d_i}}^*$. This proves the map $\rho$ is well-defined. 

Next, let $\bar{z}_i \in \bb F_{q^{k d_i}}^*$ for $i = 1, \ldots, n$. Set $y_{i, 0} := \bar z_i$ for each $i$. Since $\gcd(b, d) = 1$ and $d_i \mid d$, we have $\gcd(b, d_i) = 1$. Thus, as $m$ runs from $0$ to $d_i-1$, the indices $mb \pmod{d_i}$ run over all elements in $\bb Z/d_i \bb Z$  exactly once. Hence, we can use (\ref{E: recur}) to construct the remaining variables: for $m = 1, \ldots, d_i - 1$, set $y_{i, mb \bmod d_i} := y_{i, 0}^{q^{km}}$. Then, $\rho(y) = (\bar z_1, \ldots, \bar z_n)$, showing $\rho$ is surjective. This construction also shows $\rho$ is injective since the map $t \mapsto t^{q^k}$ is an automorphism of $\bb F_{q^{k d_i}}$.

Last, let us show the determinant identity. Observe that the permutation matrix $P$ acts on the variables $y_{i,j}$ independently for each index $i \in \{1, \dots, n\}$. Thus, $P$ decomposes into a block-diagonal matrix, $P = \mathrm{diag}(P_1, \dots, P_n)$, where each $P_i$ is a $d_i \times d_i$ permutation matrix defined by the shift $v_{i, j \bmod d_i} \mapsto v_{i, j-1 \bmod d_i}$, and so $P_i$ has order $d_i$.

Consequently, $P^b$ is also block-diagonal, with $P^b = \mathrm{diag}(P_1^b, \dots, P_n^b)$. Since $b$ is coprime to each $d_i$, every $P_i^b$ has order $d_i$. Hence, the characteristic polynomial of each $P_i^b$ is $t^{d_i} - 1$, and so
\[
\det(tI - P^b) = \prod_{i=1}^n \det(tI_{d_i} - P_i^b) = \prod_{i=1}^n (t^{d_i} - 1).
\]
Evaluating at $t = q^k$ gives:
\[
\det(q^k I - P^b) = \prod_{i=1}^n (q^{k d_i} - 1) = | W_k^{(b)} |.
\]
\end{proof}

The following is a fundamental relation in the classical $p$-adic theory of exponential sum, but here we see it is twisted by $\sigma$ but in the form of summing over $W_k^{(b)}$.

\begin{theorem}\label{T: Sk and F^k}
Let $k \ge 1$ and let $b$ be coprime to $d$. Then, 
\[
S_k(f, \bd) = \sum_{\substack{\bar y \in W_k^{(b)} \\ \hat y = \Teich(\bar y)}} F_a(\hat y) F_a(\hat y^q) \cdots F_a(\hat y^{q^{k-1}}).
\]
\end{theorem}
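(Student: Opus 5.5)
Fix $\bar y \in W_k^{(b)}$ with Teichm\"uller lift $\hat y$. The plan is to evaluate the product $F_a(\hat y)F_a(\hat y^q)\cdots F_a(\hat y^{q^{k-1}})$ pointwise via the splitting property of $\theta$. Then we identify the resulting additive character value with the summand of $S_k(f,\bd)$ at $\rho(\bar y)$, and conclude with the bijection $\rho$ of Lemma~\ref{lem:domain_bijection}.

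\textbf{Step 1 (pointwise evaluation).} Since $\hat y^{q^k} = \Teich(\bar y^{q^k}) = \Teich(\sigma^b \bar y)$, each coordinate of $\hat y$ is a Teichm\"uller root of unity. Expanding $F_a$ as the triple product over $m$, $l$, $u$ given in Section~\ref{S: Banach}, the product $\prod_{s=0}^{k-1} F_a(\hat y^{q^s})$ becomes
\[
\prod_{l=0}^{d-1}\prod_{u\in\supp(f)}\prod_{t=0}^{ak-1}\theta\Big(\big(c_u \textstyle\prod_i \hat y_{i,l \bmod d_i}^{u_i}\big)^{p^t}\Big),
\]
because $c_u^{p^m}$ together with $\hat y^{p^m q^s}$ runs over the $p^t$-powers with $t = m + as$.

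The inner argument $z_{l,u} := c_u\prod_i \hat y_{i,l}^{u_i}$ is Teichm\"uller. We need the splitting property for the degree-$ak$ extension, namely $\prod_{t=0}^{ak-1}\theta(\hat z^{p^t}) = \zeta_p^{\Tr_{\bb F_{q^k}/\bb F_p}(\bar z)}$. This holds provided $\bar z \in \bb F_{q^k}$; it is the standard Dwork fact, the same as the one quoted in the paper but over $\bb F_{q^k}$.

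\textbf{Step 2 (the main obstacle).} The catch is that $\bar z_{l,u}$ need not lie in $\bb F_{q^k}$; only $\bar z_{l,u}^{q^{kd}} = \bar z_{l,u}$ holds. So we must not split the product over $l$ term by term. Instead we group the indices $l$ into orbits under $l \mapsto l+b$ modulo $d$. Since $\gcd(b,d)=1$, this is a single orbit of length $d$.

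Using the recurrence \eqref{eq:recurrence}, we get $\hat y_{i,(l+b) \bmod d_i} = \hat y_{i,l}^{q^k}$. Combined with $c_u^{q^k} = c_u$, this gives $z_{l+b,u} = z_{l,u}^{q^k}$. Hence, writing $l = mb$ with $m = 0,\dots,d-1$, the full product over $l$ and $t$ equals
\[
\prod_{u}\prod_{t=0}^{akd-1}\theta\big(z_{0,u}^{p^t}\big).
\]
Now $\bar z_{0,u} = \bar c_u\,\rho(\bar y)^u$ lies in $\bb F_{q^{kd}}$, because $\bar y_{i,0} \in \bb F_{q^{kd_i}} \subseteq \bb F_{q^{kd}}$. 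The splitting property over $\bb F_{q^{kd}}$ therefore applies and gives $\psi\circ\Tr_{\bb F_{q^{kd}}/\bb F_q}\big(f(\rho(\bar y))\big)$, with $\psi$ the canonical character $\zeta_p^{\Tr_{\bb F_q/\bb F_p}}$ as in the paper's normalization.

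Some care is needed with the reindexing $t \leftrightarrow (m,s)$ and with orbit bookkeeping when $d_i < d$. The argument above handles this: we index by $l \in \bb Z/d\bb Z$ and only reduce modulo $d_i$ inside the coordinates.

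\textbf{Step 3 (summing).} We sum over $\bar y \in W_k^{(b)}$ and use that $\rho$ is a bijection onto $\prod_i \bb F_{q^{kd_i}}^*$ by Lemma~\ref{lem:domain_bijection}. This yields exactly $S_k(f,\bd)$, as stated in Theorem~\ref{T: Sk and F^k}.
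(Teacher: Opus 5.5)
Your proof is correct, and its skeleton matches the paper's. You use the recurrence $\hat y_{i,(j+b) \bmod d_i} = \hat y_{i,j}^{q^k}$ on $W_k^{(b)}$, re-index $l = mb$ using $\gcd(b,d)=1$, apply Dwork's splitting property, and sum via the bijection $\rho$ of Lemma~\ref{lem:domain_bijection}.

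The difference is where the re-indexing is done. The paper does it at the level of finite-field values. It computes $G(\bar y) = \Tr_{\bb F_{q^{kd}}/\bb F_{q^k}}\big(f(\rho(\bar y))\big) \in \bb F_{q^k}$. It then cites ``the construction of $F_a$ and the splitting property'' to identify $\Theta\circ\Tr_{\bb F_{q^k}/\bb F_p}(G(\bar y))$ with $\prod_{s=0}^{k-1}F_a(\hat y^{q^s})$. You do it on the Teichm\"uller product instead. For each $u$, you merge the $l$-product and the $t$-product into a single Frobenius orbit of $z_{0,u}$ of length $akd$, and then apply splitting over $\bb F_{q^{kd}}$.

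Your version is the more careful of the two. The product $\prod_{s}F_a(\hat y^{q^s})$ factors over the individual monomial values $c_u\prod_i \hat y_{i,l \bmod d_i}^{u_i}$. Their reductions generally do not lie in $\bb F_{q^k}$, and $\theta$ is not additive. So splitting over $\bb F_{q^k}$ cannot be applied term by term, and the fact that the sum $G(\bar y)$ lies in $\bb F_{q^k}$ does not by itself give the paper's last equality. Your Step 2 regrouping supplies exactly the justification that line needs. Your bookkeeping is sound: $c_u^{q^k}=c_u$, the index $t = m+as$ in Step 1, and working with $l \in \bb Z/d\bb Z$ while reducing modulo $d_i$ only inside the coordinates.
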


\begin{proof}
For $y \in W_k^{(b)}$, the recurrence relation \eqref{eq:recurrence} established in Lemma \ref{lem:domain_bijection} gives:
\[
y_{i, mb \bmod d_i} = y_{i,0}^{q^{mk}}
\]
for all integers $m$. Let us evaluate the unfolded polynomial $G(y)$ on this fixed point set. By definition:
\[
G(y) = \sum_{l=0}^{d-1} f\big(y_{1, l\bmod d_1}, \dots, y_{n, l\bmod d_n}\big).
\]
Since $b$ is coprime to $d$, the mapping $m \mapsto mb \pmod d$ is a bijection on $\{0, 1, \dots, d-1\}$. Thus, we can re-index the summation over the shifts $l$ by making the substitution $l = mb \bmod d$:
\begin{align*}
G(y) &= \sum_{m=0}^{d-1} f\big(y_{1, mb \bmod d_1}, \dots, y_{n, mb \bmod d_n}\big) \\
&= \sum_{m=0}^{d-1} f\big(y_{1,0}^{q^{mk}}, \dots, y_{n,0}^{q^{mk}}\big) \\
&= \sum_{m=0}^{d-1} f(y_{1,0}, \dots, y_{n,0})^{q^{mk}} \\
&= \Tr_{\bb F_{q^{kd}} / \bb F_{q^k}} \big( f(y_{1,0}, \dots, y_{n,0}) \big)
\end{align*}
since the coefficients of $f$ lie in $\bb F_q$.

Let $\Theta$ be the additive character on $\bb F_p$ defined by $\Theta(\bar z) := \theta(\hat z)$, where $\theta$ is Dwork's splitting function, and $\hat z = \Teich(\bar z)$. Then by Lemma \ref{lem:domain_bijection},
\begin{align*}
S_k(f, \bd) &:= \sum_{\bar x_1, \dots, \bar x_n \in \bb{F}_{q^{k d_1}}^* \times \cdots \times \bb{F}_{q^{k d_n}}^*} \Theta  \circ \Tr_{\bb{F}_{q^{kd}} / \bb{F}_p} \big( f(\bar x_1, \dots, \bar x_n) \big) \\
&= \sum_{\bar y \in W_k^{(b)}} \Theta  \circ \Tr_{\bb{F}_{q^{k}} / \bb{F}_p} \circ \Tr_{\bb{F}_{q^{kd}} / \bb{F}_{q^k}} \big( f(\bar y_{1,0}, \dots, \bar y_{n,0}) \big) \\
&= \sum_{\bar y \in W_k^{(b)}} \Theta  \circ \Tr_{\bb{F}_{q^{k}} / \bb{F}_p} G(\bar y) \\
&= \sum_{\substack{\bar y \in W_k^{(b)} \\ \hat y = \Teich(\bar y)}} F_a(\hat y) F_a(\hat y^q) \cdots F_a(\hat y^{q^{k-1}}),
\end{align*}
where the last equality comes from the construction of the $F_a$ and the splitting property of $\theta$.
\end{proof}

\begin{lemma}\label{lem:char_orthogonality}
For any $u \in \bb Z^N$, 
\[
\sum_{y \in W_k^{(b)}} y^u 
= 
\begin{cases}
\big|W_k^{(b)}\big| & \text{if } u \in (q^k P^{-b} - I)\bb Z^N, \\
0 & \text{otherwise.}
\end{cases}
\]
\end{lemma}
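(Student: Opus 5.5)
The plan is to recognize $W_k^{(b)}$ as a finite abelian group and $y\mapsto y^u$ as a character on it, then apply character orthogonality. First, $W_k^{(b)}$ is a subgroup of the torus $(\overline{\bb F}_p^*)^N$: the map $\phi(y) := \sigma^{-b}(y^{q^k})$ is a group endomorphism, being a composition of the $q^k$-power map with a coordinate permutation, and $W_k^{(b)}$ is exactly the set of $y$ with $\phi(y)=y$. By Lemma \ref{lem:domain_bijection} this group is finite of order $\prod_i (q^{kd_i}-1)$. For each $u\in\bb Z^N$ the map $\chi_u(y)=y^u$ is a homomorphism $W_k^{(b)}\to\overline{\bb F}_p^*$. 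We read the sum after Teichm\"uller lifting, which is a multiplicative injection, so $\chi_u$ gives a character with values in $\bb C_p^\times$ (or in characteristic zero in general). Orthogonality then gives $\sum_y \chi_u(y) = |W_k^{(b)}|$ if $\chi_u$ is trivial and $0$ otherwise. It remains to show that $\chi_u$ is trivial exactly when $u\in (q^kP^{-b}-I)\bb Z^N$.

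For the "if" direction, suppose $u=(q^kP^{-b}-I)v$. By Lemma \ref{lem:pullback_action} we have $\sigma(y^v)=y^{Pv}$, hence $y^{P^{-b}v}=\sigma^{-b}(y^v)=(\sigma^{-b}y)^v$. The defining relation of $W_k^{(b)}$ gives $(\sigma^{-b}y)^{q^k}=y$, where I will double-check whether this is $\sigma^{-b}$ or $\sigma^{b}$ against the convention in Lemma \ref{lem:domain_bijection}. Using it, $y^{q^kP^{-b}v}=((\sigma^{-b}y)^{v})^{q^k}=y^v$, so $y^u=1$.

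For the "only if" direction, I will count. Let $A:=q^kP^{-b}-I$. Then $|\det A| = |\det(q^kI-P^b)|$, because $P^{-b}$ is a permutation matrix and $\det P^{-b}=\pm1$. By Lemma \ref{lem:domain_bijection} this equals $|W_k^{(b)}|$, so $\bb Z^N/A\bb Z^N$ has the same order as $W_k^{(b)}$. The map $u\mapsto\chi_u$ is a homomorphism $\bb Z^N\to \widehat{W_k^{(b)}}$ whose kernel contains $A\bb Z^N$. If this map is surjective, its kernel has index $|\widehat{W}|=|W|=[\bb Z^N:A\bb Z^N]$, and therefore equals $A\bb Z^N$. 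Surjectivity holds because $W_k^{(b)}$ is a subgroup of the torus $(\overline{\bb F}_p^*)^N$, so every character of the finite subgroup, after lifting, extends to the torus. This uses that $\overline{\bb F}_p^*$ (equivalently the prime-to-$p$ roots of unity) is divisible, hence injective. The characters of the torus are precisely the $y\mapsto y^u$, or at least these restrict onto all characters of a finite subgroup of order prime to $p$.

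I expect the main obstacle to be this surjectivity and extension step, together with keeping the sign and direction conventions for $\sigma^{\pm b}$ and $P^{\pm b}$ consistent. A fallback that avoids the extension argument is to use the explicit parametrization from Lemma \ref{lem:domain_bijection}: writing $y_{i,mb}=y_{i,0}^{q^{mk}}$ turns $y^u$ into $\prod_i y_{i,0}^{e_i}$ with $e_i=\sum_m u_{i,mb}q^{mk}$. One-variable orthogonality on the cyclic groups $\bb F_{q^{kd_i}}^*$ then says the sum is nonzero iff $(q^{kd_i}-1)\mid e_i$ for all $i$. Finally, one checks that this condition is equivalent to $u\in A\bb Z^N$ by a blockwise cyclic-lattice computation.
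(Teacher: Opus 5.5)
Your proposal is correct and follows the same route as the paper: character orthogonality on the finite group $W_k^{(b)}$, a direct computation showing $(q^kP^{-b}-I)\bb Z^N$ lies in the kernel of $u\mapsto\chi_u$, and equality of the two lattices from surjectivity of $u\mapsto\chi_u$ together with the index count $|\det(q^kP^{-b}-I)| = |\det(q^kI-P^b)| = |W_k^{(b)}|$. In the surjectivity step, your hedged claim that the algebraic characters $y\mapsto y^u$ restrict onto all characters of a finite prime-to-$p$ subgroup (e.g.\ by extending only to $(\bb F_Q^*)^N$ for a suitable $Q$) is exactly what is needed, and it is more careful than the paper's assertion that every character of the torus has the form $y\mapsto y^u$.
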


\begin{proof}
To evaluate the sum $\sum_{y \in W_k^{(b)}} y^u$, we view the map $\chi_u(y) := y^u$ as a character on the finite abelian group $W_k^{(b)}$. By the standard orthogonality relations for characters, the sum over the group is $\big|W_k^{(b)}\big|$ if $\chi_u$ is the trivial character, and $0$ otherwise. Thus, we need only show:
\begin{equation}\label{eq:kernel_claim}
\chi_u \text{ is trivial on } W_k^{(b)} \quad \Longleftrightarrow \quad u \in (q^k P^{-b} - I)\bb Z^N.
\end{equation}

Define the homomorphism
\[
\Phi\colon \bb Z^N \longrightarrow \widehat{W_k^{(b)}},
\qquad
u\longmapsto \chi_u|_{W_k^{(b)}}.
\]
The kernel of $\Phi$ is the set of $u\in\bb Z^N$ such that $\chi_u$ is trivial on $W_k^{(b)}$. Setting $L := (q^k P^{-b} - I)\bb Z^N$, \eqref{eq:kernel_claim} is equivalent to the claim that $\ker(\Phi) = L$.

We first prove that $L\subseteq \ker(\Phi)$. Let $u = (q^k P^{-b} - I)v$ for some $v \in \bb Z^N$, and let $y \in W_k^{(b)}$. By definition of $W_k^{(b)}$, $y^{q^k} = \sigma^b(y)$. Since $(\sigma^b(y))^w = y^{P^b w}$ from Lemma~\ref{lem:pullback_action}, we compute:
\[
\chi_u(y) = y^{(q^k P^{-b} - I)v}
= \big(y^{q^k}\big)^{P^{-b}v} \cdot y^{-v}
= \big(\sigma^b(y)\big)^{P^{-b}v} \cdot y^{-v}
= y^{P^b P^{-b} v} \cdot y^{-v}
= 1.
\]
Hence $\chi_u$ is trivial on $W_k^{(b)}$, which shows $L \subseteq \ker(\Phi)$.

Next, we show $\Phi$ is surjective. Since the torus $T = (\overline{\bb F}_p^\times)^N$ is a divisible abelian group, every character on the subgroup $W_k^{(b)}$ extends to a character on $T$. Since every character of $T$ is of the form $y \mapsto y^u$ for some $u \in \bb Z^N$, every character of $W_k^{(b)}$ lies in the image of $\Phi$. Thus, $\Phi$ is surjective.

Thus, $\bb Z^N/\ker(\Phi) \cong \widehat{W_k^{(b)}}$. Since the character group of a finite abelian group has the same cardinality as the group itself, we have:
\[
\big|\bb Z^N/\ker(\Phi)\big| = \big|\widehat{W_k^{(b)}}\big| = \big|W_k^{(b)}\big|.
\]

Now we compare this with the index of $L$ in $\bb Z^N$. Since $L = (q^k P^{-b} - I)\bb Z^N$, and $P$ is a permutation matrix, its index is given by:
\[
\big|\bb Z^N/L\big| = \big|\det(q^k P^{-b} - I)\big| = \big|\det(q^k I - P^b)\big| =  \big|W_k^{(b)}\big|,
\]
where the last equality comes from Lemma \ref{lem:domain_bijection}. Thus,
\[
\big|\bb Z^N/L\big| = \big|W_k^{(b)}\big| = \big|\bb Z^N/\ker(\Phi)\big|.
\]
Since $L \subseteq \ker(\Phi)$, we must have $L = \ker(\Phi)$. This completes the proof.
\end{proof}

The classical Dwork trace formula for toric exponential sums takes the form \mbox{$S_k^*(f) = (q^k - 1)^n \Tr(\alpha_a^k)$}. The Dwork trace formula for partial $L$-functions has a similar structure:

\begin{theorem}[Twisted Dwork trace formula]\label{thm:twisted_trace}
Let $k \ge 1$ and let $b$ be coprime to $d$. Then, 
\begin{align*}
S_k(f, \bd) &= \left( \prod_{i = 1}^n (q^{k d_i} - 1) \right) \Tr(\sigma^b \circ \alpha_a^k \mid B) \\
&= \det(q^k I - P^b) \,\Tr\!\big(\sigma^b \circ \alpha_a^k \mid B\big).
\end{align*}
\end{theorem}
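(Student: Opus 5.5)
The plan is to start from Theorem \ref{T: Sk and F^k}, expand the right-hand side as a power series, and apply the orthogonality Lemma \ref{lem:char_orthogonality}. The result should then match an explicit computation of $\Tr(\sigma^b \circ \alpha_a^k \mid B)$ in the monomial basis. The equality $\prod_i (q^{kd_i}-1) = \det(q^kI - P^b)$ is already in Lemma \ref{lem:domain_bijection}, so only the first equality needs proof.

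\emph{Step 1: reduce $\alpha_a^k$ to a single operator.} By the same telescoping computation that gave $\alpha_a = \alpha^a$, we have
\[
\alpha_a^k = \psi_{q^k} \circ F_{a,k}(y), \qquad F_{a,k}(y) := F_a(y)F_a(y^q)\cdots F_a(y^{q^{k-1}}).
\]
Here we use that $F_a$ has coefficients in $\bb Z_q$ fixed by $\tau^a$, so no Frobenius twist appears. Write $F_{a,k}(y) = \sum_{v \in M} H_v y^v$.

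\emph{Step 2: compute the trace.} Compute $\sigma^b\circ\alpha_a^k$ on basis elements. We have
\[
\psi_{q^k}\bigl(F_{a,k}\, y^u\bigr) = \sum_{r} H_{q^k r - u}\, y^{r},
\]
and by Lemma \ref{lem:pullback_action}, $\sigma^b(y^r) = y^{P^b r}$. The diagonal coefficient at $y^u$ therefore comes from $r = P^{-b}u$. It equals $H_{q^kP^{-b}u - u}$, and the normalization $\gamma^{w(u)}$ cancels because $w(P^{-b}u) = w(u)$ by Lemma \ref{lem:weight_invariant}. Hence
\[
\Tr(\sigma^b\circ\alpha_a^k \mid B) = \sum_{u \in M} H_{(q^kP^{-b}-I)u}.
\]
This requires the trace to be well defined, i.e.\ $\sigma^b\circ\alpha_a^k$ completely continuous. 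That follows from the estimates for $\alpha$, since $\sigma$ is an isometry of $B$, again by Lemma \ref{lem:weight_invariant}. The same estimates give absolute convergence of the sum.

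\emph{Step 3: compute the character sum.} By Theorem \ref{T: Sk and F^k},
\[
S_k(f,\bd) = \sum_{\hat y} \sum_v H_v \hat y^v = \sum_v H_v \sum_{\bar y\in W_k^{(b)}} \hat y^v .
\]
Interchanging the sums is justified since the terms tend to zero $p$-adically and the inner sum is finite. Teichmüller lifting is a multiplicative isomorphism onto roots of unity, so Lemma \ref{lem:char_orthogonality} applies to the lifted character. The inner sum is $|W_k^{(b)}|$ if $v \in (q^kP^{-b}-I)\bb Z^N$ and $0$ otherwise.

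\emph{Step 4: match the index sets (the main obstacle).} We must show that $v = (q^kP^{-b}-I)u$ with $v \in M$ forces $u \in M$, and that $u$ is unique. Uniqueness holds because $q^kP^{-b}-I$ is invertible. For $u\in M$, let $C = \bb R_{\ge0}\Delta$; it is $P$-stable since $P\Delta = \Delta$. I would invert via the Neumann series
\[
u = (q^kP^{-b}-I)^{-1}v = \sum_{j\ge1} q^{-jk}P^{jb}v,
\]
which converges because $P$ has finite order. This exhibits $u$ as a nonnegative combination of the points $P^{jb}v \in C$. Since $C$ is a closed convex cone, $u \in C \cap \bb Z^N = M$. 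Hence the nonzero terms in Step 3 correspond bijectively to the terms $H_{(q^kP^{-b}-I)u}$, $u \in M$, of Step 2, giving
\[
S_k(f,\bd) = |W_k^{(b)}|\,\Tr(\sigma^b\circ\alpha_a^k\mid B),
\]
which is the first equality.
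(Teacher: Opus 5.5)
Your proposal is correct and follows the same route as the paper: write $\alpha_a^k=\psi_{q^k}\circ F_a(y)F_a(y^q)\cdots F_a(y^{q^{k-1}})$, read off the trace as $\sum_{u\in M}C_{(q^kP^{-b}-I)u}$, and match it against the character sum over $W_k^{(b)}$ using Theorem~\ref{T: Sk and F^k} and Lemma~\ref{lem:char_orthogonality}. Two points in your version are steps the paper's proof uses silently: your Step~4, which shows via $(q^kP^{-b}-I)^{-1}=\sum_{j\ge1}q^{-jk}P^{jb}$ that every $v\in M\cap(q^kP^{-b}-I)\bb Z^N$ has its preimage in $M$, and your remark that orthogonality is applied to the Teichm\"uller-lifted character.
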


\begin{proof}
Recall, $\alpha_a = \psi_q \circ F_a(y)$, and so $\alpha_a^k = \psi_{q^k} \circ F_a^{(k)}(y)$, where $F_a^{(k)}(y) := F_a(y) F_a(y^q) \cdots F_a(y^{q^{k-1}})$. Write $F_a^{(k)}(y) = \sum_{v \in M} C_v y^v$. Then, the action of $\alpha_a^k$ on a basis monomial $y^u$ is
\[
\alpha_a^k(y^u) = \psi_{q^k}\big(F_a^{(k)}(y) y^u\big) = \sum_{v \in M} C_{q^k v - u} \, y^v.
\]
Since $\sigma^b(y^v) = y^{P^b v}$, we have
\[
(\sigma^b \circ \alpha_a^k)(y^u) = \sum_{v \in M} C_{q^k v - u} \, y^{P^b v}.
\]
The only terms contributing to the trace $\Tr(\sigma^b \circ \alpha_a^k \mid B)$ are the diagonal entries, which are the terms satisfying $y^{P^b v} = y^u$. Hence, $v = P^{-b} u$. The coefficient for this diagonal term is $C_{q^k P^{-b} u - u}$. Summing over all basis vectors $u \in M$, we obtain the trace:
\[
\Tr\!\big(\sigma^b \circ \alpha_a^k \mid B\big) = \sum_{u \in M} C_{(q^k P^{-b} - I)u}.
\]
On the other hand, applying Lemma \ref{lem:char_orthogonality} we have
\[
\sum_{y \in W_k^{(b)}} F_a^{(k)}(y) = \sum_{y \in W_k^{(b)}} \sum_{u \in M} C_u y^u = \sum_{u \in M} C_u \sum_{y \in W_k^{(b)}} y^u = \big|W_k^{(b)}\big| \sum_{u \in M} C_{(q^k P^{-b} - I)u}.
\]
Hence, by Theorem \ref{T: Sk and F^k} and Lemma \ref{lem:domain_bijection}, we have
\[
S_k(f, \bd) = \sum_{y \in W_k^{(b)}} F_a^{(k)}(y) = \big|W_k^{(b)}\big| \Tr\!\big(\sigma^b \circ \alpha_a^k \mid B\big) = \det(q^k I - P^b) \Tr\!\big(\sigma^b \circ \alpha_a^k \mid B\big).
\]
\end{proof}

\section{The twisted Fredholm determinant}\label{sec:fredholm}

The Twisted Dwork trace formula (Theorem \ref{thm:twisted_trace}) expresses partial toric exponential sums as trace of Dwork's Frobenius twisted by an operator of finite order $d$. To use this in our $L$-functions, we must generalize the classical $p$-adic Fredholm determinant \cite{MR0144186} to accommodate this twist.

Let $V$ be a $p$-adic Banach space over a $p$-adic field $K$. Let $\Phi: V \to V$ be a completely continuous $K$-linear operator, and let $\mathcal{H}: V \to V$ be a bounded operator of finite order $m \ge 1$ that commutes with $\Phi$ (i.e., $\mathcal{H}\Phi = \Phi\mathcal{H}$). 

\begin{definition}\label{def:H_fredholm}
The \emph{twisted Fredholm determinant} of $\Phi$ on $V$ is defined as the formal power series:
\[
\det\nolimits_{\mathcal{H}}(I - T\Phi \mid V) := \exp \left( - \sum_{k=1}^\infty \Tr(\mathcal{H} \circ \Phi^k \mid V) \frac{T^k}{k} \right) \in 1 + T K[[T]].
\]
\end{definition}

Since $\Phi$ is completely continuous and $\mathcal{H}$ is bounded, the composition $\mathcal{H} \circ \Phi^k$ is completely continuous, and so the trace is a well-defined element of $K$. 

Unlike their classical counterparts, twisted Fredholm determinants need not be $p$-adically entire. For example, if $V$ is one-dimensional, $\Phi = \lambda I$, and $\mathcal{H} = \zeta I$ for a nontrivial root of unity $\zeta$, the twisted Fredholm determinant is $(1 - \lambda T)^\zeta$, whose binomial expansion has a finite radius of convergence even though $\Phi$ is trivially completely continuous.

\begin{lemma}\label{lem:nilpotent_trace}
Let $V_\lambda$ be the finite-dimensional generalized $\lambda$-eigenspace of $\Phi$. Then for all $k \ge 1$,
\[
\Tr(\mathcal H \circ \Phi^k \mid V_\lambda) = \lambda^k \Tr(\mathcal H \mid V_\lambda).
\]
\end{lemma}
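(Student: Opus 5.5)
First, $\mathcal H$ preserves $V_\lambda$, so the trace $\Tr(\mathcal H \circ \Phi^k \mid V_\lambda)$ makes sense. The generalized eigenspace is $V_\lambda = \ker (\Phi - \lambda I)^r$ for some $r$. Since $\mathcal H$ commutes with $\Phi$, it commutes with $(\Phi-\lambda I)^r$, so $\mathcal H(V_\lambda) \subseteq V_\lambda$. Likewise $\Phi(V_\lambda)\subseteq V_\lambda$. Hence $\mathcal H$, $\Phi$, and $\mathcal H\circ\Phi^k$ all restrict to endomorphisms of the finite-dimensional space $V_\lambda$, and the trace is an ordinary finite-dimensional trace.

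Next I decompose $\Phi|_{V_\lambda} = \lambda I + \mathcal N$ with $\mathcal N := (\Phi-\lambda I)|_{V_\lambda}$. Then $\mathcal N$ is nilpotent and commutes with $\mathcal H|_{V_\lambda}$. The binomial theorem, valid because $\lambda I$ and $\mathcal N$ commute, gives
\[
\mathcal H\Phi^k = \lambda^k \mathcal H + \sum_{j=1}^{k}\binom{k}{j}\lambda^{k-j}\,\mathcal H\mathcal N^j .
\]
By linearity of the trace, it suffices to show $\Tr(\mathcal H\mathcal N^j\mid V_\lambda)=0$ for every $j\ge 1$.

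This vanishing is the only real step. Since $\mathcal H$ and $\mathcal N$ commute, $(\mathcal H\mathcal N^j)^s = \mathcal H^s\mathcal N^{js}$, which is $0$ for $s$ large. So $\mathcal H\mathcal N^j$ is a nilpotent endomorphism of a finite-dimensional space, and its trace is $0$: all its eigenvalues over $\overline K$ vanish. Alternatively, one can use the flag $V_\lambda\supseteq \mathcal N V_\lambda\supseteq \mathcal N^2V_\lambda\supseteq\cdots\supseteq 0$. Each term is $\mathcal H$-stable by commutativity, and $\mathcal H\mathcal N^j$ maps each term into the next one, so its matrix in an adapted basis is strictly block upper triangular.

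Combining the two displays gives $\Tr(\mathcal H\circ\Phi^k\mid V_\lambda)=\lambda^k\Tr(\mathcal H\mid V_\lambda)$. No real obstacle is expected. The one point needing care is that commutativity is used twice: once for the invariance of $V_\lambda$ under $\mathcal H$, and once for the nilpotence of $\mathcal H\mathcal N^j$. The finite order of $\mathcal H$ is not needed for this lemma.
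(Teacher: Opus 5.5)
Your proof is correct and follows the paper's argument: write $\Phi|_{V_\lambda} = \lambda I + N$, expand $\mathcal H\Phi^k$ binomially, and observe that each $\mathcal H N^j$ with $j \ge 1$ is nilpotent (because $\mathcal H$ commutes with $N$) and so has trace zero. You also verify explicitly that $V_\lambda = \ker(\Phi-\lambda I)^r$ is $\mathcal H$-stable, which the paper leaves implicit; this is what makes $\Tr(\mathcal H \circ \Phi^k \mid V_\lambda)$ well defined in the first place.
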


\begin{proof}
On the finite-dimensional space $V_\lambda$, we may write $\Phi\big|_{V_\lambda} = \lambda I + N$ where $N$ is nilpotent. Since $\mathcal H$ commutes with $\Phi$, $\mathcal H$ commutes with $N$. Hence, $\mathcal H N^j$ is nilpotent for every $j \geq 1$. Now,
\[
\mathcal H \circ \Phi^k = \mathcal H(\lambda I + N)^k = \mathcal H \left( \lambda^k I + \sum_{j=1}^k \binom{k}{j} \lambda^{k-j} N^j \right) = \lambda^k \mathcal H + \sum_{j=1}^k \binom{k}{j} \lambda^{k-j} (\mathcal H N^j).
\]
Since $\mathcal H N^j$ is nilpotent, its trace is zero. Thus, taking the trace of the binomial expansion gives
\[
\Tr(\mathcal H \circ \Phi^k \mid V_\lambda) = \Tr(\lambda^k \mathcal H \mid V_\lambda) = \lambda^k \Tr(\mathcal H \mid V_\lambda).
\]
\end{proof}

By the theory of completely continuous operators, the trace $\Tr(\mathcal H \circ \Phi^k \mid V)$ is the convergent sum of the traces over its generalized eigenspaces. Applying Lemma \ref{lem:nilpotent_trace}, we have $\Tr(\mathcal H \circ \Phi^k \mid V) = \sum_{\lambda \neq 0} c_\lambda \lambda^k$, where $c_\lambda := \Tr(\mathcal H \mid V_\lambda)$ and $| \lambda|_p \rightarrow 0$. Thus, we obtain the following identity within the ring of formal power series $\bb C_p[[T]]$:
\[
\det\nolimits_{\mathcal H}(I - T \Phi \mid V) = \prod_{\lambda \neq 0} (1 - \lambda T)^{c_\lambda}.
\]
Since $\mathcal{H}$ has finite order, its eigenvalues are roots of unity, and so the multiplicities $c_\lambda$ are algebraic integers but not necessarily rational integers $\bb Z$. For a non-integer exponent $c_\lambda$, the $p$-adic binomial expansion of $(1 - \lambda T)^{c_\lambda}$ has a finite radius on convergence. Consequently, the infinite product above is analytic only on a ball of finite radius.

\begin{theorem}
If each multiplicity $c_\lambda \in \bb Z$, then $\det\nolimits_{\mathcal H}(I - T \Phi \mid V)$ is $p$-adic meromorphic.
\end{theorem}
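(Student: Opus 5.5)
The plan is to split the product $\prod_{\lambda\neq 0}(1-\lambda T)^{c_\lambda}$ according to the sign of $c_\lambda$. We then show that each of the two resulting pieces is an entire function, so the twisted determinant is a ratio of entire functions.

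Set $\Lambda_+ := \{\lambda \neq 0 : c_\lambda > 0\}$ and $\Lambda_- := \{\lambda\neq 0 : c_\lambda<0\}$, and consider the formal products
\[
A(T) := \prod_{\lambda\in\Lambda_+}(1-\lambda T)^{c_\lambda}, \qquad C(T) := \prod_{\lambda\in\Lambda_-}(1-\lambda T)^{-c_\lambda}.
\]
Since every $c_\lambda\in\bb Z$, each factor is a polynomial. Then $\det_{\mathcal H}(I-T\Phi\mid V) = A(T)/C(T)$ as an identity in $\bb C_p[[T]]$, using the identity established just before the statement.

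The main step is to show that $A$ and $C$ are entire. Write $m_\lambda := \dim V_\lambda$, which is the algebraic multiplicity of $\lambda$ for $\Phi$. By Serre's theory of completely continuous operators \cite{MR0144186}, the classical determinant $\det(I-T\Phi\mid V)=\prod_{\lambda}(1-\lambda T)^{m_\lambda}$ is entire. Equivalently, for every $R>0$ the number of eigenvalues with $|\lambda|_p\ge 1/R$, counted with multiplicity $m_\lambda$, is finite, and $|\lambda|_p\to 0$.

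The key bound is $|c_\lambda|\le m_\lambda$. Indeed, $\mathcal H$ preserves $V_\lambda$ because it commutes with $\Phi$. Since $\mathcal H$ has finite order $m$, its eigenvalues on $V_\lambda$ are $m_\lambda$ roots of unity, so $c_\lambda=\Tr(\mathcal H\mid V_\lambda)$ is a sum of $m_\lambda$ roots of unity. Under any complex embedding this gives $|c_\lambda|\le m_\lambda$, and since $c_\lambda\in\bb Z$ the bound holds as integers.

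With this bound in hand, consider any disc $|T|_p\le R$. On this disc only finitely many factors of $A$ or $C$ have a zero, and the product of the remaining factors converges uniformly. The reason is that their exponents are dominated by the $m_\lambda$, and the zero-counting for $\det(I-T\Phi)$ gives convergence of the sum of $|c_\lambda|\cdot|\lambda|_p$ restricted to $|\lambda|_p$ small. More precisely, for $|T|_p\le R$ each factor $(1-\lambda T)^{\pm c_\lambda}$ with $|\lambda T|_p<1$ is a $1$-unit power series whose deviation from $1$ is at most $|\lambda|_pR$. Because $|\lambda|_p\to 0$ and only finitely many $\lambda$ exceed any bound, the infinite product converges in the Tate algebra of that disc. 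Hence $A$ and $C$ are entire, and each has coefficients in $\bb C_p$.

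I expect the main obstacle to be making this convergence rigorous: the ultrametric convergence of the infinite product on each closed disc, together with the domination of $|c_\lambda|$ by $m_\lambda$. The subtlety is that Lemma~\ref{lem:nilpotent_trace} and the eigenvalue decomposition of the trace are used only formally, so I must check that the formal product equals the analytic one. For this I would compare logarithmic derivatives, $-\sum_k \Tr(\mathcal H\Phi^k)T^{k-1}=\sum_\lambda c_\lambda \lambda/(1-\lambda T)$, as an identity of power series near $0$, and then invoke uniqueness of power series expansions. Finally, $C$ is not identically zero because $C(0)=1$, so $A/C$ is $p$-adic meromorphic.
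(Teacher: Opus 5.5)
Your proof is correct and is the natural argument the paper leaves implicit: the paper states the theorem without proof, right after the identity $\det\nolimits_{\mathcal H}(I-T\Phi\mid V)=\prod_{\lambda\neq0}(1-\lambda T)^{c_\lambda}$, and you fill it in by splitting by the sign of $c_\lambda$ and showing each product of polynomials converges on every closed disc because $|\lambda|_p\to 0$. The bound $|c_\lambda|\le m_\lambda$ is not needed, and your claimed convergence of $\sum|c_\lambda|\,|\lambda|_p$ is both unnecessary and false for general completely continuous operators; integrality of binomial coefficients already gives $\|(1-\lambda T)^{n}-1\|_R\le|\lambda|_pR$ for every integer $n\ge 0$ once $|\lambda|_pR<1$, which is all your ``more precisely'' step uses (also, the left side of your logarithmic-derivative identity should carry no minus sign).
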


\section{The Partial $\delta_{\bf d}$-Operator}

In classical Dwork theory, the $L$-function of toric exponential sums may be written using Dwork's $\delta$ operator, defined by: $F(T)^\delta := F(T) / F(qT)$. For partial toric sums, we may do the same, however we need to take into account the asymmetric field extensions. 

For a formal power series $F(T) \in 1 + T K[[T]]$ and an integer $c \ge 1$, define $\delta_c$ by:
\[
F(T)^{\delta_c} := \frac{F(T)}{F(q^c T)}.
\]
With $\bd = (d_1, \ldots, d_n)$, we define the partial $\delta_{\bf d}$-operator as the composition:
\[
\delta_{\bd} := \delta_{d_1} \circ \delta_{d_2} \circ \cdots \circ \delta_{d_n}.
\]

\begin{lemma}\label{lem:delta_operator}
Let $F(T) \in 1 + T K[[T]]$. Then
\[
F(T)^{\delta_{\bd}} = \prod_{I \subseteq \{1, \dots, n\}} F(q^{d_I} T)^{(-1)^{|I|}}, \qquad \text{where } d_I := \sum_{i \in I} d_i,
\]
and the empty set $I = \emptyset$ corresponds to $d_\emptyset = 0$, contributing the factor $F(T)^{(-1)^0} = F(T)$.
\end{lemma}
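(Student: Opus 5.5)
We argue by induction on $n$, the number of components of $\bd$, using only that each $\delta_c$ is a multiplicative operator which commutes with the substitution $T \mapsto q^e T$.

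First we record two elementary facts about $\delta_c$ on $1 + TK[[T]]$. For $F, G \in 1 + TK[[T]]$ and $s \in \bb Z$,
\[
(FG)^{\delta_c} = F^{\delta_c} G^{\delta_c}, \qquad (F^{s})^{\delta_c} = (F^{\delta_c})^{s}.
\]
Both follow immediately from $F(T)^{\delta_c} = F(T)/F(q^cT)$. Moreover, if $H(T) := F(q^eT)$ for some integer $e \ge 0$, then
\[
H(T)^{\delta_c} = F(q^eT)/F(q^{e+c}T).
\]
So applying $\delta_c$ to a rescaled series simply rescales the result. All of these expressions remain in $1 + TK[[T]]$, so every composition is well defined.

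For the base case $n = 1$, the right-hand side of the formula is $F(T)F(q^{d_1}T)^{-1}$. This is exactly $F(T)^{\delta_{d_1}}$, with the two subsets $\emptyset$ and $\{1\}$ giving the two factors.

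For the inductive step, set $\bd' = (d_2, \dots, d_n)$, so that $\delta_{\bd} = \delta_{d_1} \circ \delta_{\bd'}$. By induction,
\[
F(T)^{\delta_{\bd'}} = \prod_{J \subseteq \{2,\dots,n\}} F(q^{d_J}T)^{(-1)^{|J|}}.
\]
Next we apply $\delta_{d_1}$. By multiplicativity and the rescaling fact above, each factor $F(q^{d_J}T)^{\pm1}$ becomes
\[
F(q^{d_J}T)^{\pm1}\, F(q^{d_J + d_1}T)^{\mp1}.
\]
We then reindex the resulting product. The factors with exponent $(-1)^{|J|}$ correspond to the subsets $I = J \subseteq \{1,\dots,n\}$ with $1 \notin I$. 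The factors with exponent $(-1)^{|J|+1}$ correspond to $I = J \cup \{1\}$, which satisfy $d_I = d_J + d_1$ and $|I| = |J|+1$. Together these cover every $I \subseteq \{1,\dots,n\}$ exactly once, which gives the claimed formula.

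No step here is a genuine obstacle. The only points requiring care are the bookkeeping in this bijection of subsets, and the observation that the order in which the $\delta_{d_i}$ are composed does not matter. The latter holds because the $\delta_c$ commute: both $\delta_c \circ \delta_{c'}$ and $\delta_{c'} \circ \delta_c$ send $F$ to $F(T)F(q^{c+c'}T)/\bigl(F(q^cT)F(q^{c'}T)\bigr)$.
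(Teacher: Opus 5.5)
Your proof is correct and follows essentially the same route as the paper: induction on $n$, splitting off a single $\delta_{d_i}$, and reindexing via the bijection $J \mapsto J \cup \{i\}$ on subsets. The only difference is cosmetic. You peel off $\delta_{d_1}$, which is literally the outermost operator in $\delta_{d_1} \circ \cdots \circ \delta_{d_n}$, so your commutativity remark is not even needed; the paper instead peels off $d_n$ and expands $F^{\delta_{\bd'}}(T)/F^{\delta_{\bd'}}(q^{d_n}T)$ using the inductive hypothesis.
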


\begin{proof}
We proceed by induction on $n$. For $n=1$, the composition is simply $\delta_{d_1}$, and the formula reads $F(T)^{\delta_{d_1}} = F(q^0 T)^{(-1)^0} F(q^{d_1} T)^{(-1)^1} = F(T) / F(q^{d_1} T)$ as desired.

Assume the identity holds for $n-1$, and write $\bd' = (d_1, \dots, d_{n-1})$. By definition, $F(T)^{\delta_{\bd}} = (F(T)^{\delta_{\bd'}})^{\delta_{d_n}}$. Thus,
\[
F(T)^{\delta_{\bd}} = \frac{F(T)^{\delta_{\bd'}}}{F(q^{d_n} T)^{\delta_{\bd'}}}.
\]
By the induction hypothesis, the numerator expands as $\prod_{I \subseteq \{1, \dots, n-1\}} F(q^{d_I} T)^{(-1)^{|I|}}$, and the denominator expands as $\prod_{I \subseteq \{1, \dots, n-1\}} F(q^{d_I + d_n} T)^{(-1)^{|I|}}$. We move the denominator to the numerator, so it becomes
\[
\prod_{I \subseteq \{1, \dots, n-1\}} F(q^{d_I + d_n} T)^{(-1)^{|I|+1}}.
\]
Set $J := I \cup \{n\} \subseteq \{1, \dots, n\}$. For these subsets, we have $|J| = |I| + 1$ and $d_J = d_I + d_n$. Thus, the exponent $(-1)^{|I| + 1} = (-1)^{|J|}$. The original numerator covers all subsets not containing $n$, and the inverted denominator covers all subsets containing $n$. Combining them gives the product over all $I \subseteq \{1, \dots, n\}$.
\end{proof}

We may now give the $\delta_{\bf d}$ formula for the partial $L$-function. 

\begin{theorem}\label{thm:l_function_factorization}
The partial $L$-function may be written as an alternating product of twisted Fredholm determinants:
\[
L(\bd, f/\bb F_q, T)^{(-1)^{n-1}} = \det\nolimits_\sigma(I - T\alpha_a \mid B)^{\delta_{\bd}}.
\]
\end{theorem}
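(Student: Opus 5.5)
The identity is a formal power series identity, so I will compare logarithms. I take $b=1$ in the twisted Dwork trace formula (Theorem~\ref{thm:twisted_trace}); this is allowed since $\gcd(1,d)=1$. It gives, for every $k\ge 1$,
\[
S_k(f,\bd) = \prod_{i=1}^n (q^{kd_i}-1)\,\Tr(\sigma\circ\alpha_a^k \mid B).
\]
Expanding the product over subsets $I\subseteq\{1,\dots,n\}$ gives
\[
\prod_{i=1}^n (q^{kd_i}-1) = \sum_{I} (-1)^{n-|I|} q^{k d_I},
\]
with $d_I$ as in Lemma~\ref{lem:delta_operator}. Therefore
\[
(-1)^{n-1} S_k(f,\bd) = -\sum_{I} (-1)^{|I|} q^{kd_I}\, \Tr(\sigma\circ\alpha_a^k\mid B).
\]

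Next I write $D(T):=\det\nolimits_\sigma(I-T\alpha_a\mid B)$. By Definition~\ref{def:H_fredholm},
\[
\log D(q^{c}T) = -\sum_{k\ge1} q^{ck}\,\Tr(\sigma\circ\alpha_a^k\mid B)\,\frac{T^k}{k}
\]
for each integer $c\ge 0$. To apply the definition I need that $\sigma$ is bounded of finite order and commutes with $\alpha_a$. Finite order is Lemma~\ref{lem:pullback_action}. Commuting with $\alpha_a$ is Lemma~\ref{lem:g_commutes_alpha}. For boundedness, $\sigma$ permutes the orthonormal basis $\gamma^{w(u)}y^u$ by Lemma~\ref{lem:weight_invariant}, so it is an isometry of $B$.

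By Lemma~\ref{lem:delta_operator},
\[
\log D(T)^{\delta_\bd} = \sum_I (-1)^{|I|}\log D(q^{d_I}T).
\]
Substituting the expression for $\log D(q^{c}T)$ with $c=d_I$, this becomes
\[
-\sum_k \Big(\sum_I (-1)^{|I|}q^{kd_I}\Big)\Tr(\sigma\circ\alpha_a^k\mid B)\,\frac{T^k}{k}.
\]
By the computation in the first paragraph this equals $\sum_k (-1)^{n-1}S_k(f,\bd)\,T^k/k$, which is $\log L(\bd,f/\bb F_q,T)^{(-1)^{n-1}}$. Exponentiating in $1+TK[[T]]$ gives the theorem.

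There is no real obstacle. The only points that need care are the sign bookkeeping between $(-1)^{n-|I|}$ and $(-1)^{|I|}$, together with the overall factor $(-1)^{n-1}$, and checking that the composition $\delta_\bd$ is applied in the form given by Lemma~\ref{lem:delta_operator}. The identity holds as formal power series. It does not need any meromorphy of the twisted determinant.
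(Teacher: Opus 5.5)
Your proposal is correct and follows essentially the same route as the paper. Both arguments use the twisted trace formula with $b=1$, expand $\prod_i(q^{kd_i}-1)=\sum_I(-1)^{n-|I|}q^{kd_I}$, and finish with Lemma~\ref{lem:delta_operator}. Your logarithm-level sign bookkeeping checks out, and you also verify that $\sigma$ is a bounded, finite-order operator commuting with $\alpha_a$, which the paper leaves implicit.
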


\begin{proof}
The twisted Dwork trace formula (Theorem \ref{thm:twisted_trace}) with $b = 1$ states
\[
S_k(f, \bd) = \det(q^k I - P) \Tr(\sigma \circ \alpha_a^k \mid B).
\]
By Lemma \ref{lem:domain_bijection}, 
\[
\det(q^k I - P) = \prod_{i=1}^n (q^{k d_i} - 1) = \sum_{I \subseteq \{1, \dots, n\}} (-1)^{n-|I|} q^{k d_I}
\]
where $d_I := \sum_{i \in I} d_i$.  Thus,
\begin{align*}
L(\bd, f/\bb F_q, T) &:= \exp \left( \sum_{k=1}^\infty S_k(f, \bd) \frac{T^k}{k} \right) \\
&= \exp \left( \sum_{k=1}^\infty \sum_{I \subseteq \{1, \dots, n\}} (-1)^{n-|I|} \Tr(\sigma \circ \alpha_a^k \mid B) \frac{(q^{d_I} T)^k}{k} \right) \\
&= \prod_{I \subseteq \{1, \dots, n\}} \exp \left( - \sum_{k=1}^\infty \Tr(\sigma \circ \alpha_a^k \mid B) \frac{(q^{d_I} T)^k}{k} \right)^{(-1)^{n-|I|+1}} \\
&= \prod_{I \subseteq \{1, \dots, n\}} \det\nolimits_\sigma(I - q^{d_I} T \alpha_a \mid B)^{(-1)^{n-|I|+1}}.
\end{align*}
Hence, by Lemma \ref{lem:delta_operator}, we have
\begin{align*}
L(\bd, f/\bb F_q, T)^{(-1)^{n-1}} &= \prod_{I \subseteq \{1, \dots, n\}} \det\nolimits_\sigma(I - q^{d_I} T \alpha_a \mid B)^{(-1)^{|I|}} \\
&= \det\nolimits_\sigma(I - T\alpha_a \mid B)^{\delta_{\bd}}
\end{align*}
as desired.
\end{proof}

\section{Rationality of the partial $L$-function}

For an integer $b$ coprime to $d$, consider the twisted Fredholm determinant on the Banach space $B$:
\[
\det\nolimits_{\sigma^b}(I - T\alpha_a \mid B) := \exp \left( - \sum_{k=1}^\infty \Tr(\sigma^b \circ \alpha_a^k \mid B) \frac{T^k}{k} \right).
\]
As discussed in Section \ref{sec:fredholm}, we may write 
\[
\det\nolimits_{\sigma^b}(I - T\alpha_a \mid B) = \prod_{\lambda \neq 0} (1 - \lambda T)^{c_\lambda(b)}, \qquad \text{where } c_\lambda(b) := \Tr(\sigma^b \mid V_\lambda),
\]
and $V_\lambda$ denotes the generalized $\lambda$-eigenspace of $\alpha_a$.

\begin{theorem}\label{thm:integrality}
$c_\lambda(1) \in \mathbb{Z}$ for every eigenvalue $\lambda$.
\end{theorem}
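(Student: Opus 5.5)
We prove that $c_\lambda(1)$ is an integer by combining a Galois-invariance argument with the rationality of the trace over $\bb Q$.

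\textbf{Step 1: $c_\lambda(b)$ does not depend on $b$.} The twisted Dwork trace formula (Theorem \ref{thm:twisted_trace}) holds for every $b$ coprime to $d$. Its left side $S_k(f,\bd)$ and the factor $\det(q^kI-P^b)=\prod_i(q^{kd_i}-1)$ are both independent of $b$. Hence, for all $k\ge 1$ and all such $b$,
\[
\Tr(\sigma^b\circ\alpha_a^k\mid B)=\Tr(\sigma\circ\alpha_a^k\mid B).
\]
Lemma \ref{lem:nilpotent_trace} and the eigenspace decomposition give $\sum_{\lambda\ne0}c_\lambda(b)\lambda^k=\sum_{\lambda\ne0}c_\lambda(1)\lambda^k$ for all $k\ge1$. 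The $\lambda$ are distinct and satisfy $|\lambda|_p\to0$, so the two exponential series must coincide. Concretely, the power series $\sum_\lambda c_\lambda\lambda T/(1-\lambda T)$ determines the $c_\lambda$, for example through residues at the poles $1/\lambda$ on each disc of convergence, or via a Vandermonde argument over finitely many $\lambda$ of each fixed absolute value after subtracting the tail. We conclude $c_\lambda(b)=c_\lambda(1)$ for all $b\in(\bb Z/d\bb Z)^\times$.

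\textbf{Step 2: eigenvalues of $\sigma$ on $V_\lambda$.} Since $\sigma$ commutes with $\alpha_a$ (Lemma \ref{lem:g_commutes_alpha}), it preserves the finite-dimensional space $V_\lambda$ and has order dividing $d$ there. Over $\overline K$ it is therefore diagonalizable with eigenvalues $d$-th roots of unity. Let $m_j$ be the multiplicity of $\zeta_d^j$, which is a nonnegative integer. Then
\[
c_\lambda(b)=\sum_{j\in\bb Z/d\bb Z} m_j\,\zeta_d^{jb}.
\]

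\textbf{Step 3: averaging over units.} By Step 1,
\[
c_\lambda(1)=\frac{1}{\varphi(d)}\sum_{b\in(\bb Z/d\bb Z)^\times}c_\lambda(b)=\frac{1}{\varphi(d)}\sum_j m_j\, R_j,
\]
where $R_j=\sum_{b}\zeta_d^{jb}$ is a Ramanujan sum and hence an integer. So $c_\lambda(1)$ is rational. It is also an algebraic integer, being a sum of roots of unity. Therefore $c_\lambda(1)\in\bb Z$.

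The main obstacle is Step 1, specifically the uniqueness of the coefficients in a convergent exponential sum $\sum_\lambda e_\lambda\lambda^k=0$ for all $k$. We plan to handle it by forming the generating series $\sum_\lambda e_\lambda\lambda T/(1-\lambda T)$. This series is meromorphic on $\bb C_p$, since it is a convergent sum of simple poles with $|\lambda|\to0$ and bounded $e_\lambda$. It vanishes identically, so every residue $e_\lambda$ is zero. If that route is delicate, we will fall back on a truncation argument: fix a radius $r$, note that only finitely many $|\lambda|\ge r$ occur, and let $k\to\infty$ to compare.
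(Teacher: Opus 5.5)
Your proposal is correct and follows the paper's proof: Step~1 is the paper's argument verbatim in substance, namely independence of $b$ via the twisted trace formula, then the vanishing generating series $\sum_{\lambda} d_\lambda \lambda/(1-\lambda T)$ and its poles at $T=\lambda^{-1}$. The only difference is cosmetic: you get rationality by averaging $c_\lambda(b)$ over $b\in(\mathbb{Z}/d\mathbb{Z})^\times$ using Ramanujan sums, whereas the paper notes directly that $c_\lambda(1)$ is fixed by every $\tau_b\in\Gal(\mathbb{Q}(\zeta_d)/\mathbb{Q})$, and your average is just $\frac{1}{\varphi(d)}$ times the Galois trace, so it is the same idea.
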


\begin{proof}
We will first show that the traces $c_\lambda(b)$ are independent of the choice of $b$. We start by recalling the twisted Dwork trace formula (Theorem \ref{thm:twisted_trace}):
$$S_k(f, \mathbf{d}) = \det(q^k I - P^b) \operatorname{Tr}(\sigma^b \circ \alpha_a^k \mid B).$$

By Lemma \ref{lem:domain_bijection}, the determinant $\det(q^k I - P^b) = \det(q^k I - P)$ is independent of $b$. Since the partial exponential sum $S_k(f, \mathbf{d})$ is independent of $b$, we get that $\operatorname{Tr}(\sigma^b \circ \alpha_a^k \mid B) = \operatorname{Tr}(\sigma \circ \alpha_a^k \mid B)$ for all $b$ coprime to $d$, and for all integers $k \ge 1$.

Next, since $\alpha_a$ is a completely continuous operator, we may expand the trace of its $k$-th iterate as the convergent sum over its generalized $\lambda$-eigenspaces $V_\lambda$:
$$
\operatorname{Tr}(\sigma^b \circ \alpha_a^k \mid B) = \sum_{\lambda \neq 0} c_\lambda(b) \lambda^k = \sum_{\lambda \neq 0} c_\lambda(1) \lambda^k,
$$
where $c_\lambda(b) := \operatorname{Tr}(\sigma^b \mid V_\lambda)$ and $c_\lambda(1) := \operatorname{Tr}(\sigma \mid V_\lambda)$. Defining the difference $d_\lambda := c_\lambda(b) - c_\lambda(1)$, we obtain the sequence of identities $\sum_{\lambda \neq 0} d_\lambda \lambda^k = 0$ for all integers $k \ge 1$. We will show $d_\lambda = 0$ for every $\lambda$. 

Define the formal generating function:
$$\sum_{k=1}^\infty \left( \sum_{\lambda \neq 0} d_\lambda \lambda^k \right) T^{k-1} = 0.$$
Since the eigenvalues of a completely continuous operator satisfy $|\lambda|_p \to 0$, we may change the order of summation to get
$$\sum_{\lambda \neq 0} \frac{d_\lambda \lambda}{1 - \lambda T} = 0.$$

Fix an eigenvalue $\lambda_ 0$. Write
$$\frac{d_{\lambda_0} \lambda_0}{1 - \lambda_0 T} + \sum_{\lambda \neq 0, \lambda_0} \frac{d_\lambda \lambda}{1 - \lambda T} = 0$$
as
$$d_{\lambda_0} \lambda_0 + (1 - \lambda_0 T) \sum_{\lambda \neq 0, \lambda_0} \frac{d_\lambda \lambda}{1 - \lambda T} = 0.$$
Since the lefthand side is $p$-adic meromorphic, we may evaluate it at $T = \lambda_0^{-1}$ to obtain $d_{\lambda_0} \lambda_0 = 0$. Thus, $d_{\lambda_0} = 0$. Consequently, $c_\lambda(b) = c_\lambda(1)$ for all $b \in (\mathbb{Z}/d\mathbb{Z})^\times$.

Next, since the shift operator $\sigma$ has order $d$, its restriction to $V_\lambda$ has order dividing $d$. Its eigenvalues are therefore $d$-th roots of unity, which implies that the trace $c_\lambda(1) \in \mathbb{Q}(\zeta_d)$.

The Galois group $\operatorname{Gal}(\mathbb{Q}(\zeta_d) / \mathbb{Q})$ consists of all automorphisms $\tau_b : \zeta \mapsto \zeta^b$ for $b \in (\mathbb{Z}/d\mathbb{Z})^\times$. Applying $\tau_b$ to the trace of $\sigma$ raises its roots of unity to the $b$-th power. Thus, 
$$\tau_b(c_\lambda(1)) = \operatorname{Tr}(\sigma^b \mid V_\lambda) = c_\lambda(b) = c_\lambda(1).$$

Since $c_\lambda(1)$ is fixed by the Galois group, it must lie in the base field $\mathbb{Q}$. Since it is also a finite sum of algebraic integers (roots of unity), $c_\lambda(1)$ is an algebraic integer. Therefore, $c_\lambda(1) \in \mathbb{Z}$, as desired.
\end{proof}

\begin{corollary}
The twisted Fredholm determinant $\det\nolimits_{\sigma}(I - T\alpha_a \mid B)$ is $p$-adic meromorphic. Consequently, the partial $L$-function $L(\bd, f/\bb{F}_q, T)$ is $p$-adic meromorphic, and hence, is a rational function in $\bb Q(\zeta_p)(T)$.
\end{corollary}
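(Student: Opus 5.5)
By Theorem \ref{thm:integrality}, every exponent $c_\lambda(1)$ in the factorization
\[
\det\nolimits_{\sigma}(I - T\alpha_a \mid B) = \prod_{\lambda \neq 0} (1 - \lambda T)^{c_\lambda(1)}
\]
is a rational integer. The first step is to make the product rigorous. I split it into two parts: the factors with $c_\lambda(1) \ge 0$, which I call $P_+(T)$, and the factors with $c_\lambda(1) < 0$, whose inverse I call $P_-(T)$.

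Each of $P_\pm$ is a product of polynomials $1-\lambda T$ with multiplicities. The number of factors is bounded by $|c_\lambda(1)| \le \dim V_\lambda$, since $\sigma$ has finite order and so its eigenvalues on $V_\lambda$ are roots of unity. The eigenvalues satisfy $|\lambda|_p \to 0$, counted with multiplicity $\dim V_\lambda$. Hence $\sum_\lambda |c_\lambda(1)|\, |\lambda|_p^{j}$ is controlled on every disc. Concretely, $\dim V_\lambda$ summed over $|\lambda|_p \ge r$ is finite for each $r>0$, because $\alpha_a$ is completely continuous and so has only finitely many eigenvalues of norm at least $r$, counted with multiplicity.

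It follows that for any radius $R$, only finitely many factors have a zero in the closed disc $|T|\le R$. The remaining factors satisfy $|\lambda T|_p$ tending to $0$ uniformly on that disc. So the infinite products $P_+$ and $P_-$ converge uniformly on every disc, and each is an entire function. This gives $\det\nolimits_\sigma(I - T\alpha_a\mid B) = P_+(T)/P_-(T)$, which is meromorphic.

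To identify this quotient with the formal power series defined in Definition \ref{def:H_fredholm}, I compare logarithmic derivatives. The formal identity
\[
-\sum_k \Tr(\sigma\alpha_a^k\mid B)\,T^k/k = \sum_\lambda c_\lambda(1)\log(1-\lambda T)
\]
holds near $T=0$ because the trace is the convergent sum $\sum_\lambda c_\lambda(1)\lambda^k$ established in Section \ref{sec:fredholm}. Exponentiating, and using the integrality of the exponents, gives the quotient of convergent products.

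For the $L$-function, Theorem \ref{thm:l_function_factorization} expresses $L(\bd,f/\bb F_q,T)^{(-1)^{n-1}}$ as a finite alternating product of the functions $\det_\sigma(I - q^{d_I}T\alpha_a\mid B)$. Each of these is a rescaling of a meromorphic function and is therefore meromorphic, so $L$ is $p$-adic meromorphic on all of $\bb C_p$.

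Rationality then follows from Dwork's criterion, as recalled in the introduction. The coefficients of $L$ lie in $\bb Q(\zeta_p)$, and the same holds for their conjugates. Archimedean analyticity on a disc of positive radius comes from the trivial bound $|S_k(f,\bd)| \le \prod_i (q^{kd_i}-1) \le q^{kN}$ at every complex place, so $L$ converges for $|T| < q^{-N}$. Combined with $p$-adic meromorphy on all of $\bb C_p$, the Borel–Dwork rationality criterion, which works over a number field via Hankel determinants, gives $L \in \bb Q(\zeta_p)(T)$.

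The main obstacle is the convergence step. The key point is that $|c_\lambda(1)|$ is bounded by $\dim V_\lambda$, so that the finiteness of the total multiplicity in each annulus carries over to the integer exponents. Once that is in place, the rest is standard Dwork theory.
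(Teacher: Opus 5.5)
Your argument is the paper's: integrality of $c_\lambda(1)$ gives meromorphy of $\det\nolimits_\sigma(I-T\alpha_a\mid B)$, the $\delta_{\bd}$ factorization transfers this to $L$, and Dwork's criterion finishes the proof. Your bound $|c_\lambda(1)|\le\dim V_\lambda$ usefully supplies the convergence of $\prod_\lambda(1-\lambda T)^{c_\lambda(1)}$, which the paper asserts without proof in Section~5.

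One point to make explicit, which the paper's proof also glosses: Dwork's criterion needs the coefficients of $L$ to be algebraic integers, not merely elements of $\bb Q(\zeta_p)$. This follows by grouping $S_k(f,\bd)$ into orbits of $x\mapsto x^q$ on $\prod_i\bb F_{q^{kd_i}}^*$. That turns $L$ into an Euler product over closed points $P$ of $\mathbb{G}_m^n$ with factors $(1-\eta_P T^{e_P})^{-m_P}$, where $\eta_P$ is a $p$-th root of unity and $e_P, m_P$ are positive integers.
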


\begin{proof}
Since the traces $c_\lambda(1)$ are integers (Theorem \ref{thm:integrality}), we see from Section \ref{sec:fredholm} that \mbox{$\det\nolimits_{\sigma}(I - T\alpha_a \mid B)$} is $p$-adic meromorphic. Next, the $\delta_{\bf d}$ structure of the partial $L$-function (Theorem \ref{thm:l_function_factorization}) shows that the $L$-function is $p$-adic meromorphic. We are now able to use Dwork's standard rationality criterion: any formal power series with coefficients in a finite degree algebraic number field that is $p$-adically meromorphic and complex analytic in a small disk must be a rational function.
\end{proof}

\section{$p$-adic Cohomology} \label{S: cohom}

In this section we develop a $p$-adic cohomology theory for the partial $L$-function using a Koszul complex adapted to fit the twisted framework above. To ease reading, we recall some definitions from Section \ref{S: Banach}. 

Let $\bb Q_q$ be the unramified extension of $\bb Q_p$ of degree $a$, and let $\bb Z_q$ be its ring of integers. By the theory of Newton polygons, the series $\sum_{i \ge 0} t^{p^i}/p^i$ has a root $\gamma \in \overline{\bb Q}_p$ satisfying $\ord_p \gamma = 1/(p-1)$. Fix a $D$-th root $\gamma^{1/D}$, and set $K := \bb Q_q(\gamma^{1/D})$. Define the $p$-adic Banach space $B$ over $K$ with orthonormal basis $\{ \gamma^{w(u)} y^u \}_{u \in M}$ as:
\[
B := \left\{ \sum_{u \in M} A_u \gamma^{w(u)} y^u \;\middle|\; A_u \in K \text{ and }  |A_u|_p \rightarrow 0 \text{ as } w(u) \to \infty \right\}.
\]
Write $f(x) = \sum_{u \in \supp(f)} \bar c_u x^u$ with coefficients $\bar c_u \in \bb F_q^*$, and let $c_u = \Teich(\bar c_u) \in \bb Z_q$ be their Teichm\"uller lifts. The unfolded polynomial associated to $f$ is defined by
\[
G(y) := \sum_{l=0}^{d-1} f\big(y_{1, l \bmod d_1}, \dots, y_{n, l \bmod d_n}\big) \in \bb F_q[y_{i,j}^{\pm 1} \mid 1 \le i \le n, j \in \bb Z/ d_i \bb Z].
\]
Define
\[
F(y) := \prod_{l=0}^{d-1} \prod_{u \in \supp(f)} \theta\!\left( c_u \prod_{i=1}^n y_{i, l \bmod d_i}^{u_i} \right).
\]
Let $\tau \in \Gal(\bb Q_q / \bb Q_p)$ be the generator of the cyclic Galois group which acts on Teichm\"ullers by $\tau(c) = c^p$. We defined
\[
F_a(y) := F(y) F^\tau(y^p) \cdots F^{\tau^{a-1}}(y^{p^{a-1}}).
\]
Define the Frobenius maps $\alpha := \tau^{-1} \circ \psi_p \circ F(y)$, and $\alpha_a := \alpha^a = \psi_q \circ F_a(y)$.

To define a differential that commutes with the Frobenius, we define the infinite product
\[
K(y) := \prod_{j=0}^\infty F^{\tau^j}(y^{p^j}),
\]
and observe that $F(y) = \frac{K(y)}{K^\tau(y^p)}$. Let $\widehat{G}(y) := \sum_{l=0}^{d-1} \sum_{u \in \supp(f)} c_u \prod_{i=1}^n y_{i, l \bmod d_i}^{u_i}$ be the Teichm\"uller lift of the unfolded polynomial $G(y)$, and denote by $\widehat{G}^{\tau^m}(y)$ the polynomial obtained by applying $\tau^m$ to its coefficients. Now,
\begin{align*}
\log K(y) &= \sum_{j=0}^\infty \log F^{\tau^j}(y^{p^j}) \\
&= \sum_{j=0}^\infty \sum_{i=0}^\infty \frac{\gamma^{p^i}}{p^i} \widehat{G}^{\tau^{i+j}}(y^{p^{i+j}}).
\end{align*}
Collecting terms by setting $m = i+j$, we define $H(y) := \log K(y)$ as
\[
H(y) = \sum_{m=0}^\infty \gamma_m \widehat{G}^{\tau^m}(y^{p^m}), \qquad \text{where} \quad \gamma_m := \sum_{i=0}^m \frac{\gamma^{p^i}}{p^i}.
\]
Notice that $\gamma_m = -\sum_{j = m+1}^\infty \gamma^{p^j}/p^j$, which shows $\ord_p(\gamma_m) = \frac{p^{m+1}}{p-1} - (m+1)$.

For each variable $y_{i, j}$ where $1 \le i \le n$ and $j \in \bb Z / d_i \bb Z$, we define the differential operator $D_{i,j}$ on the Banach space $B$ via conjugation by $K(y)$:
\begin{align*}
D_{i,j} :&= \frac{1}{K(y)} \circ y_{i,j} \frac{\partial}{\partial y_{i,j}} \circ K(y) \\
&= y_{i,j} \frac{\partial}{\partial y_{i,j}} + y_{i,j} \frac{\partial H(y)}{\partial y_{i,j}}.
\end{align*}

\begin{lemma} \label{lem:differential_well_defined}
$y_{i,j} \frac{\partial H(y)}{\partial y_{i,j}} \in B$. Consequently, the differential $D_{i,j}$ is a well-defined endomorphism of $B$.
\end{lemma}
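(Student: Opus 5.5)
We compute $y_{i,j}\,\partial H/\partial y_{i,j}$ term by term and show its coefficients decay fast enough relative to the basis $\gamma^{w(u)}y^u$. Write $\widehat G(y)=\sum_{m'=1}^{k} c_{m'} y^{v^{(m')}}$ as in the proof of Lemma \ref{L: F estimates}, with each $v^{(m')}\in M$ and $w(v^{(m')})\le 1$, since the support of $G$ lies in $\Delta$. Then
\[
y_{i,j}\frac{\partial H}{\partial y_{i,j}} = \sum_{m=0}^\infty \gamma_m\, p^m \sum_{m'=1}^k c_{m'}^{p^m}\, v^{(m')}_{i,j}\, y^{p^m v^{(m')}} .
\]
This is a formal series supported on the monomials $p^m v^{(m')}$. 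For fixed $m$ there are only $k$ terms, so each monomial $y^u$ receives contributions from at most finitely many pairs $(m,m')$, and the series is a well-defined element of $K[[M]]$. Writing it in the basis gives coefficients $A_u=\gamma^{-w(u)}\sum \gamma_m p^m c_{m'}^{p^m} v^{(m')}_{i,j}$, the sum running over the pairs with $p^m v^{(m')}=u$.

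Next we estimate the terms. Since $c_{m'}$ is a Teichmüller unit and $v^{(m')}_{i,j}\in\bb Z$, and using $\ord_p\gamma_m=\frac{p^{m+1}}{p-1}-(m+1)$, each term $\gamma_m p^m c_{m'}^{p^m}v^{(m')}_{i,j}\gamma^{-w(u)}$ with $u=p^mv^{(m')}$ has
\[
\ord_p \ge \frac{p^{m+1}}{p-1}-(m+1)+m-\frac{p^m w(v^{(m')})}{p-1}\ \ge\ \frac{p^{m+1}-p^m}{p-1}-1 = p^m-1 .
\]
Since $w(u)=p^m w(v^{(m')})\le p^m$, the condition $w(u)\to\infty$ forces $m\to\infty$. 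Hence $\ord_p A_u\ge p^m-1\to\infty$, so $|A_u|_p\to0$ and $y_{i,j}\,\partial H/\partial y_{i,j}\in B$. Its coefficients are moreover bounded, indeed integral apart from $m=0$, where the bound is $\ge 0$.

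For the consequence, multiplication by an element $h\in B$ is bounded on $B$. Exactly as in the proof that $\alpha$ is well defined, $\ord_\gamma$ of a product coefficient is at least $w(u)+w(v)+(\text{shifts}) \ge w(u+v)+c$ by subadditivity of $w$, so the product stays in $B$. For $y_{i,j}\partial/\partial y_{i,j}$, the basis vector $\gamma^{w(u)}y^u$ is sent to $u_{i,j}\gamma^{w(u)}y^u$ with $u_{i,j}\in\bb Z$, and $|u_{i,j}|_p\le1$, so this operator is bounded with norm at most $1$. Therefore $D_{i,j}$ is a bounded endomorphism of $B$.

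The main obstacle is the valuation bookkeeping in the second step: we must check that the factor $\gamma^{-w(u)}$, of size up to $p^m/(p-1)$, is absorbed by $\ord_p\gamma_m$. This margin is tight, leaving exactly $p^m-1$ after the $p^m$ factor coming from the derivative. The subadditivity of $w$ used in the product estimate for the final step is routine by comparison.
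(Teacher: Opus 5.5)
Your proof is correct and follows the paper's route. The paper's entire proof is the termwise formula $y_{i,j}\partial H/\partial y_{i,j}=\sum_m \gamma_m p^m\, y_{i,j}(\partial \widehat G^{\tau^m}/\partial y_{i,j})(y^{p^m})$ together with the estimate $\ord_p\gamma_m=\frac{p^{m+1}}{p-1}-(m+1)$, and you have made the bookkeeping explicit, getting $\ord_p\ge p^m-1\ge w(u)-1$ for every contributing term. The one spot to tighten is closure of $B$ under multiplication: the uniform bound $w(u+v)+c$ gives only boundedness, and membership in $B$ also needs the shifts $c_u+b_v$ to tend to infinity as $w(u+v)\to\infty$. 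That does hold, because one of $w(u),w(v)$ is at least $w(u+v)/2$.
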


\begin{proof}
The result follows from
\[
y_{i,j} \frac{\partial H(y)}{\partial y_{i,j}} = \sum_{m=0}^\infty \gamma_m p^m y_{i,j} \left( \frac{\partial \widehat{G}^{\tau^m}}{\partial y_{i,j}} \right) (y^{p^m})
\]
and the $p$-adic estimate of $\gamma_m$.
\end{proof}

We may now construct the Koszul complex. Let $N = \sum_{i=1}^n d_i$, and let $W = \mathrm{span}_K\!\left(\frac{dy_{1,0}}{y_{1,0}}, \dots, \frac{dy_{n, d_n-1}}{y_{n, d_n-1}}\right)$ be the $N$-dimensional $K$-vector space of logarithmic differential forms. Define $\Omega^0(B, D) := B$, and for $m = 1, \ldots, N$, define
\[
\Omega^m(B, D) := B \otimes_K \wedge^m W
\]
with boundary map $D: \Omega^m(B, D) \to \Omega^{m+1}(B, D)$ defined by 
\[
D\left( \xi \frac{dy_{i_1, j_1}}{y_{i_1, j_1}} \wedge \cdots \wedge \frac{dy_{i_m, j_m}}{y_{i_m, j_m}} \right) := \left( \sum_{i=1}^n \sum_{j=0}^{d_i-1} D_{i,j}(\xi) \frac{dy_{i,j}}{y_{i,j}} \right) \wedge \frac{dy_{i_1, j_1}}{y_{i_1, j_1}} \wedge \cdots \wedge \frac{dy_{i_m, j_m}}{y_{i_m, j_m}}.
\]
One may check that $D^2 = 0$. Thus, we may define the cohomology spaces
\[
H^m\!\big(B, D\big) := \frac{\ker(D: \Omega^m(B) \to \Omega^{m+1}(B))}{\mathrm{im}(D: \Omega^{m-1}(B) \to \Omega^m(B))}.
\]

Next we define a chain map from Dwork's Frobenius $\alpha_a$. Using the relation $F(y) = \frac{K(y)}{K^\tau(y^p)}$, we may write 
\begin{align*}
\alpha &= \tau^{-1} \circ \psi_p \circ \left( \frac{K(y)}{K^\tau(y^p)} \right) \\
&= \tau^{-1} \circ \left( \frac{1}{K^\tau(y)} \circ \psi_p \circ K(y) \right) \\
&= \frac{1}{K(y)} \circ \tau^{-1} \circ \psi_p \circ K(y).
\end{align*}
Now, since $p y_{i,j} \frac{\partial}{\partial y_{i,j}} \circ \psi_p = \psi_p \circ y_{i,j} \frac{\partial}{\partial y_{i,j}}$ we have the following relation.

\begin{lemma} \label{lem:D_commutes_alpha}
$p D_{i,j} \circ \alpha = \alpha \circ D_{i,j}$. Consequently, $q D_{i,j} \circ \alpha_a = \alpha_a \circ D_{i,j}$.
\end{lemma}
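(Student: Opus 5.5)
The plan is to use the conjugated forms of both operators. We have $\alpha = \frac{1}{K(y)} \circ \tau^{-1} \circ \psi_p \circ K(y)$ and $D_{i,j} = \frac{1}{K(y)} \circ y_{i,j}\frac{\partial}{\partial y_{i,j}} \circ K(y)$. Composing them, the inner factors $K(y)\cdot\frac{1}{K(y)}$ cancel, giving
\[
p D_{i,j}\circ\alpha = \frac{1}{K(y)} \circ p\, y_{i,j}\frac{\partial}{\partial y_{i,j}} \circ \tau^{-1}\circ\psi_p \circ K(y), \qquad \alpha\circ D_{i,j} = \frac{1}{K(y)} \circ \tau^{-1}\circ\psi_p\circ y_{i,j}\frac{\partial}{\partial y_{i,j}} \circ K(y).
\]
So it suffices to show two commutation rules. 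First, $\tau^{-1}$ commutes with $y_{i,j}\partial/\partial y_{i,j}$, since $\tau$ acts only on coefficients and the operator multiplies the coefficient of $y^u$ by the integer $u_{i,j}$. Second, $p\, y_{i,j}\partial_{i,j}\circ\psi_p = \psi_p\circ y_{i,j}\partial_{i,j}$, which we check on monomials. If $p\mid u$, the right side is $u_{i,j}y^{u/p}$ and the left side is $p\cdot(u_{i,j}/p)\,y^{u/p}$, so they agree. Otherwise both sides vanish.

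Because $K(y)$ itself is not an element of $B$ (it has only a finite radius of convergence), this manipulation is formal. To make it rigorous, I would not rely on the conjugated formulas directly. Instead I would verify the identity on the explicit forms. Write $D_{i,j} = E + h$ with $E = y_{i,j}\partial_{i,j}$ and $h = y_{i,j}\partial H/\partial y_{i,j}$. Also write $\alpha(\xi) = \tau^{-1}\psi_p(F\xi)$ with $F = \exp(H - H^\tau(y^p))$, which follows from $\log F = H(y) - H^\tau(y^p)$.

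The computation then has three steps:
\begin{itemize}
\item $E(F\xi) = F\,E\xi + F\,\bigl(h - (Eh^\tau)(y^p)\text{-type term}\bigr)\xi$. Concretely, $E$ applied to $H^\tau(y^p)$ gives $p\,h^\tau(y^p)$.
\item Applying $\tau^{-1}\psi_p$, use $\psi_p(g(y^p)\eta) = g(y)\psi_p(\eta)$ together with the two commutation rules above.
\item Compare both sides: they match term by term, namely $\tau^{-1}\psi_p(F h\xi) + \tau^{-1}\psi_p(F E\xi)$ against $p\,E\alpha(\xi) + p\,h\,\alpha(\xi)$.
\end{itemize}
All series involved lie in $B$, by Lemma \ref{lem:differential_well_defined} and the estimates already used to show $\alpha$ is well defined. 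Hence every step is justified in $B$.

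The consequence follows by iteration. Applying the first identity $a$ times gives
\[
p^a D_{i,j}\circ\alpha^a = \alpha^a\circ D_{i,j}.
\]
Since $\alpha_a = \alpha^a$ and $q = p^a$, this is $q D_{i,j}\circ\alpha_a = \alpha_a\circ D_{i,j}$.

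The main obstacle is the rigor issue with $K(y)\notin B$. I expect the main care to go into the bookkeeping of $\psi_p(g(y^p)\eta)=g(y)\psi_p(\eta)$ and the $\tau$-twist, that is, the step $\tau^{-1}\circ\frac{1}{K^\tau} = \frac{1}{K}\circ\tau^{-1}$.
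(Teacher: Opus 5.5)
Your proposal is correct. The formal conjugation computation in your first paragraph is exactly the paper's argument. The paper writes $\alpha = \frac{1}{K(y)}\circ\tau^{-1}\circ\psi_p\circ K(y)$ and $D_{i,j} = \frac{1}{K(y)}\circ y_{i,j}\frac{\partial}{\partial y_{i,j}}\circ K(y)$, invokes $p\,y_{i,j}\frac{\partial}{\partial y_{i,j}}\circ\psi_p = \psi_p\circ y_{i,j}\frac{\partial}{\partial y_{i,j}}$, and stops there. It leaves the passage to $\alpha_a=\alpha^a$ implicit.

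What you add is a verification that never multiplies by $K(y)$, which is the step the paper treats only formally. From $\log F = H - H^\tau(y^p)$ you get $E(F\xi) = F\,E\xi + F\bigl(h - p\,h^\tau(y^p)\bigr)\xi$. It follows that $pE\alpha(\xi) + p\,h\,\alpha(\xi) = \tau^{-1}\psi_p(F\,E\xi + F\,h\,\xi) = \alpha(D_{i,j}\xi)$. The two $p\,h^\tau(y^p)$ contributions cancel via $\psi_p(g(y^p)\eta) = g\,\psi_p(\eta)$ and $\tau^{-1}(g^\tau\eta) = g\,\tau^{-1}\eta$. This is a legitimate and more careful justification, at the cost of a few lines of bookkeeping.

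One small correction: not every series in your computation lies in $B$. The series $F$ and $p\,h^\tau(y^p)$ only satisfy the bound $\ord_\gamma(\text{coefficient of } y^u)\ge w(u)$, as in Lemma~\ref{L: F estimates}, and not the decay required for membership in $B$. Without the factor $p$, $h^\tau(y^p)$ fails even this bound, by $p-1$ at terms $y^{pv}$ with $w(v)=1$. So your intermediate products live in that larger space of bounded series. It is $\psi_p$ that brings them back into $B$, exactly as in the proof that $\alpha$ is well defined.
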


Define the Frobenius chain map $\Frob_q^\bullet : \Omega^\bullet(B, D) \to \Omega^\bullet(B, D)$ by:
\[
\Frob_q^m := q^{N-m} \alpha_a \otimes \mathrm{id}_{\wedge^m W}.
\]

The shift operator $\sigma$ acts on $B$ by $\sigma(y_{i, j \bmod d_i}) = y_{i, (j+1) \bmod d_i}$. Since the unfolded polynomial $G(y)$ is invariant under $\sigma$ (Lemma \ref{lem:G_shift_invariant}), its lifted polynomial $\widehat G(y)$ and the formal series $H(y)$ are also invariant. Thus, we have $\sigma \circ D_{i, j \bmod d_i} = D_{i, (j+1) \bmod d_i} \circ \sigma$. 

Next, $\sigma$ acts on forms 
\[
\sigma\left( \frac{dy_{i,j}}{y_{i,j}} \right) = \frac{d(\sigma y_{i,j})}{\sigma y_{i,j}} = \frac{dy_{i, j+1 \bmod d_i}}{y_{i, j+1 \bmod d_i}}.
\]
Define the chain map $\mathcal{H}^\bullet : \Omega^\bullet(B, D) \to \Omega^\bullet(B, D)$ by
\[
\mathcal{H}^{(m)} := \sigma \otimes \wedge^m \sigma.
\]

\begin{lemma}
$D \circ \mathcal{H}^{(m)} = \mathcal{H}^{(m+1)} \circ D$.
\end{lemma}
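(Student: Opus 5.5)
The plan is to verify the identity directly on a generator
\[
\omega = \xi \, \frac{dy_{i_1, j_1}}{y_{i_1, j_1}} \wedge \cdots \wedge \frac{dy_{i_m, j_m}}{y_{i_m, j_m}},
\]
with $\xi \in B$. Both sides are $K$-linear and continuous, so this suffices. Throughout, I will use two facts already recorded before the statement:
\begin{itemize}
\item the intertwining relation $\sigma \circ D_{i, j} = D_{i, (j+1) \bmod d_i} \circ \sigma$, which comes from the $\sigma$-invariance of $H(y)$, itself inherited from Lemma \ref{lem:G_shift_invariant};
\item the action of $\sigma$ on logarithmic forms, $\sigma(dy_{i,j}/y_{i,j}) = dy_{i,j+1}/y_{i,j+1}$.
\end{itemize}

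First I will compute $\mathcal H^{(m+1)}(D\omega)$. By definition,
\[
D\omega = \sum_{i,j} D_{i,j}(\xi)\, \frac{dy_{i,j}}{y_{i,j}} \wedge \eta, \qquad \eta := \frac{dy_{i_1, j_1}}{y_{i_1, j_1}} \wedge \cdots \wedge \frac{dy_{i_m, j_m}}{y_{i_m, j_m}}.
\]
Applying $\sigma \otimes \wedge^{m+1}\sigma$, which is multiplicative on wedge products, gives
\[
\mathcal H^{(m+1)}(D\omega) = \sum_{i,j} \sigma\big(D_{i,j}\xi\big)\, \frac{dy_{i,j+1}}{y_{i,j+1}} \wedge \wedge^m\sigma(\eta).
\]
By the intertwining relation, $\sigma(D_{i,j}\xi) = D_{i,j+1}(\sigma\xi)$.

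Second, I will reindex the inner sum over $j \in \bb Z/d_i\bb Z$ by $j' = j+1$. This is a bijection of $\bb Z/d_i\bb Z$, so the sum becomes
\[
\sum_{i,j'} D_{i,j'}(\sigma\xi)\, \frac{dy_{i,j'}}{y_{i,j'}} \wedge \wedge^m\sigma(\eta).
\]
This is exactly $D\big(\sigma\xi \otimes \wedge^m\sigma(\eta)\big) = D(\mathcal H^{(m)}\omega)$.

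The only point needing care is the intertwining relation itself, which I will justify rather than merely cite. For the derivative part, $\sigma \circ y_{i,j}\partial_{y_{i,j}} = y_{i,j+1}\partial_{y_{i,j+1}} \circ \sigma$: on a monomial, $y_{i,j}\partial_{y_{i,j}}$ multiplies $y^u$ by $u_{i,j}$, and by Lemma \ref{lem:pullback_action} we have $\sigma(y^u) = y^{Pu}$ with $(Pu)_{i,j+1} = u_{i,j}$. For the multiplication part, apply the same identity to $H$ and use $\sigma H = H$:
\[
\sigma\Big(y_{i,j}\frac{\partial H}{\partial y_{i,j}}\Big) = y_{i,j+1}\frac{\partial H}{\partial y_{i,j+1}}.
\]
The invariance $\sigma H = H$ holds termwise, since $\widehat G^{\tau^m}(y^{p^m})$ is $\sigma$-invariant ($\sigma$ permutes variables and commutes with raising to $p^m$ and with $\tau$ acting on coefficients); it passes to $H$ because $\sigma$ is an isometry of $B$ by Lemma \ref{lem:weight_invariant}, so it commutes with convergent sums. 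I do not expect a genuine obstacle here; the main bookkeeping risk is keeping the index shift direction consistent with the convention $(Pv)_{i,j} = v_{i,j-1}$.
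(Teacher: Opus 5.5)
Your proposal is correct and follows essentially the same route as the paper. Both apply $\mathcal{H}^{(m+1)}$ to $D(\xi\otimes\eta)$, use $\sigma\circ D_{i,j}=D_{i,(j+1)\bmod d_i}\circ\sigma$ together with $\sigma(dy_{i,j}/y_{i,j})=dy_{i,j+1}/y_{i,j+1}$, and reindex over $\mathbb{Z}/d_i\mathbb{Z}$; your extra verification of the intertwining relation, which the paper only asserts from the $\sigma$-invariance of $H$, is sound.
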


\begin{proof}
Let $\omega = \xi \otimes \eta \in \Omega^m(B, D)$, with $\xi \in B$ and $\eta \in \wedge^m W$. Then
\[
D(\xi \otimes \eta) = \sum_{i=1}^n \sum_{j=0}^{d_i-1} D_{i,j}(\xi) \otimes (w_{i,j} \wedge \eta),
\]
where $w_{i,j} := \frac{dy_{i,j}}{y_{i,j}}$ to ease notation. Applying the operator $\mathcal{H}^{(m+1)}$ we see that
\begin{align*}
\mathcal{H}^{(m+1)}\big(D(\xi \otimes \eta)\big) &= \sum_{i=1}^n \sum_{j=0}^{d_i-1} \sigma\big(D_{i,j}(\xi)\big) \otimes \big(\sigma(w_{i,j}) \wedge \sigma(\eta)\big) \\
&= \sum_{i=1}^n \sum_{j=0}^{d_i-1} D_{i, (j+1) \bmod d_i}\big(\sigma(\xi)\big) \otimes \big(w_{i, (j+1) \bmod d_i} \wedge \sigma(\eta)\big) \\
&= \sum_{i=1}^n \sum_{k=0}^{d_i-1} D_{i, k}\big(\sigma(\xi)\big) \otimes \big(w_{i, k} \wedge \sigma(\eta)\big) \\ 
&= D\big(\sigma(\xi) \otimes \sigma(\eta)\big) \\
&= D\big(\mathcal{H}^{(m)}(\omega)\big)
\end{align*}
as desired.
\end{proof}

Since $\sigma$ and $\alpha_a$ commute (Lemma \ref{lem:g_commutes_alpha}), we have the following.

\begin{lemma}
$\Frob_q^m \circ \mathcal{H}^{(m)} = \mathcal{H}^{(m)} \circ \Frob_q^m$
\end{lemma}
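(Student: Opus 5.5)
Both maps have tensor-product form on $\Omega^m(B,D) = B \otimes_K \wedge^m W$:
\[
\Frob_q^m = q^{N-m}\alpha_a \otimes \mathrm{id}_{\wedge^m W},
\qquad
\mathcal{H}^{(m)} = \sigma \otimes \wedge^m\sigma .
\]
So the commutation should reduce to the statement on $B$, namely Lemma \ref{lem:g_commutes_alpha}. The plan is to check the identity on pure tensors $\xi \otimes \eta$, with $\xi \in B$ and $\eta \in \wedge^m W$. Both sides are $K$-linear and continuous, since $\wedge^m W$ is finite dimensional and $\sigma$, $\alpha_a$ are bounded on $B$. Pure tensors span $\Omega^m(B,D)$, in the sense that every element is a finite sum $\sum_I \xi_I \otimes \eta_I$ over a basis $\{\eta_I\}$ of $\wedge^m W$. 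Hence agreement on pure tensors suffices.

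On a pure tensor the computation is direct. First,
\[
\Frob_q^m\bigl(\mathcal{H}^{(m)}(\xi\otimes\eta)\bigr) = q^{N-m}\,\alpha_a(\sigma\xi)\otimes \sigma(\eta).
\]
In the other order,
\[
\mathcal{H}^{(m)}\bigl(\Frob_q^m(\xi\otimes\eta)\bigr) = \sigma\bigl(q^{N-m}\alpha_a\xi\bigr)\otimes\sigma(\eta) = q^{N-m}\,\sigma(\alpha_a\xi)\otimes\sigma(\eta).
\]
Pulling the scalar $q^{N-m}$ through $\sigma$ uses only the $K$-linearity of $\sigma$. This holds because $\sigma$ permutes monomials, $\sigma(y^u)=y^{Pu}$, and is extended linearly. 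Moreover $w(Pu)=w(u)$ by Lemma \ref{lem:weight_invariant}, so $\sigma$ preserves the orthonormal basis $\{\gamma^{w(u)}y^u\}$ up to permutation and is an isometry of $B$.

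Lemma \ref{lem:g_commutes_alpha} then gives $\sigma\circ\alpha_a = \alpha_a\circ\sigma$, so the two expressions coincide. The identity component $\mathrm{id}_{\wedge^m W}$ trivially commutes with $\wedge^m\sigma$.

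The only point needing care is well-definedness. One must confirm that $\mathcal{H}^{(m)}$ is a genuine operator on the completed tensor product, so that reducing to pure tensors is legitimate. Since $W$ is finite dimensional, $\Omega^m(B,D)\cong B^{\binom{N}{m}}$. In this description both maps are given by block matrices: $\Frob_q^m$ is the scalar multiple $q^{N-m}\alpha_a$ on the diagonal, and $\mathcal{H}^{(m)}$ is $\sigma$ times the permutation-with-signs matrix of $\wedge^m\sigma$. Any operator acting diagonally as a multiple of $\alpha_a$ commutes with any matrix whose entries are multiples of $\sigma$, again by Lemma \ref{lem:g_commutes_alpha}. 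I expect no genuine obstacle here; the substance of the statement is entirely carried by Lemma \ref{lem:g_commutes_alpha}.
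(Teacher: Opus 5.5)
Your proof is correct and follows the paper, which deduces this lemma in one line from Lemma~\ref{lem:g_commutes_alpha} ($\sigma\circ\alpha_a=\alpha_a\circ\sigma$) together with the tensor-product forms $\Frob_q^m = q^{N-m}\alpha_a\otimes\mathrm{id}$ and $\mathcal{H}^{(m)}=\sigma\otimes\wedge^m\sigma$. Your pure-tensor computation and the remark that $\Omega^m(B,D)\cong B^{\binom{N}{m}}$ just spell out that one-line deduction.
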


As a consequence, $\mathcal{H}^{(m)} \circ \Frob_q^m$ is a well-defined endomorphism on $H^m(B, D)$.

\begin{theorem}\label{thm:koszul_trace}
For each integer $k \ge 1$,
\[
S_k(f, \bd) = \sum_{m=0}^N (-1)^m \Tr\!\left( \mathcal{H}^{(m)} \circ (\Frob_q^m)^k \ \Big|\ \Omega^m(B)\right).
\]
\end{theorem}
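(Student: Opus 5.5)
The plan is to compute each trace on the right-hand side as a product of a trace on $B$ and a trace on $\wedge^m W$. Then sum the resulting alternating series and compare with the twisted Dwork trace formula (Theorem \ref{thm:twisted_trace}) for $b=1$.

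\textbf{Step 1: factor each trace.} By definition, $(\Frob_q^m)^k = q^{k(N-m)}\alpha_a^k \otimes \mathrm{id}_{\wedge^m W}$ and $\mathcal{H}^{(m)} = \sigma\otimes\wedge^m\sigma$. Hence
\[
\mathcal{H}^{(m)}\circ(\Frob_q^m)^k = q^{k(N-m)}\,(\sigma\circ\alpha_a^k)\otimes \wedge^m\sigma .
\]
The space $\Omega^m(B)$ is a finite direct sum of copies of $B$, indexed by the basis $w_{I}=w_{i_1,j_1}\wedge\cdots\wedge w_{i_m,j_m}$ of $\wedge^m W$. The operator $\wedge^m\sigma$ permutes these basis vectors up to sign. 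So the trace of a tensor product of a completely continuous operator with a finite-dimensional operator is the product of the traces. This is seen by taking the orthonormal basis $\{\gamma^{w(u)}y^u\otimes w_I\}$ and reading off the diagonal entries, exactly as in the proof of Theorem \ref{thm:twisted_trace}. It gives
\[
\Tr\!\left(\mathcal{H}^{(m)}\circ(\Frob_q^m)^k \mid \Omega^m(B)\right) = q^{k(N-m)}\,\Tr(\wedge^m\sigma\mid\wedge^m W)\,\Tr(\sigma\circ\alpha_a^k\mid B).
\]

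\textbf{Step 2: the finite-dimensional sum.} Summing over $m$ with signs, we need
\[
\sum_{m=0}^N (-1)^m q^{k(N-m)}\Tr(\wedge^m\sigma\mid\wedge^m W) = \det(q^kI - \sigma\mid W).
\]
This is the standard identity $\det(tI-A)=\sum_m (-1)^m t^{N-m}\Tr(\wedge^m A)$, evaluated at $t=q^k$. On the basis $w_{i,j}$, $\sigma$ acts by $w_{i,j}\mapsto w_{i,j+1}$, a permutation matrix conjugate to $P$; indeed it is $P$ itself in the coordinates of Lemma \ref{lem:pullback_action}. Therefore $\det(q^kI-\sigma\mid W)=\det(q^kI-P)=\prod_i(q^{kd_i}-1)$ by Lemma \ref{lem:domain_bijection}.

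\textbf{Step 3: conclude.} Combining the two steps, the right-hand side equals $\det(q^kI-P)\Tr(\sigma\circ\alpha_a^k\mid B)$. By Theorem \ref{thm:twisted_trace} with $b=1$, this is $S_k(f,\bd)$.

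The only point needing care is Step 1: the trace of $(\sigma\circ\alpha_a^k)\otimes\wedge^m\sigma$ on $B\otimes\wedge^mW$. The difficulty is that the operator is not diagonal in the product basis. Since $\wedge^m\sigma$ is a signed permutation of the $w_I$, the only diagonal contributions come from $I$ with $\sigma(w_I)=\pm w_I$. Each such $I$ contributes that sign times $\Tr(\sigma\circ\alpha_a^k\mid B)$, and summing these signs gives $\Tr(\wedge^m\sigma)$. Convergence is unproblematic, because the trace on each copy of $B$ is already known to converge.
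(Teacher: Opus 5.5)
Your proposal is correct and follows the same route as the paper. You factor the trace on $B\otimes\wedge^m W$ as $q^{k(N-m)}\Tr(\sigma\circ\alpha_a^k\mid B)\,\Tr(\wedge^m\sigma)$, sum over $m$ using $\sum_m(-1)^m t^{N-m}\Tr(\wedge^m\sigma)=\det(tI-\sigma)$ together with $\det(q^kI-\sigma)=\det(q^kI-P)$, and then apply the twisted trace formula with $b=1$. Your signed-permutation argument for why the trace on $\Omega^m(B)$ factors is a detail the paper leaves implicit, and it is sound.
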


\begin{proof}
Since $\Omega^m(B, D) = B \otimes_K \wedge^m W$ is a tensor product and $(\Frob_q^m)^k = q^{k(N-m)}\alpha_a^k \otimes \mathrm{id}$ and $\mathcal{H}^{(m)} = \sigma \otimes \wedge^m \sigma$, the trace over the space decomposes multiplicatively:
\[
\Tr\!\left( \mathcal{H}^{(m)} \circ (\Frob_q^m)^k \mid \Omega^m(B, D)\right) = q^{k(N-m)} \Tr(\sigma \circ \alpha_a^k \mid B) \cdot \Tr(\wedge^m \sigma).
\]
Thus,
\begin{align*}
\sum_{m=0}^N (-1)^m \Tr\!\left( \mathcal{H}^{(m)} \circ (\Frob_q^m)^k \ \Big|\ \Omega^m(B)\right) &= \Tr(\sigma \circ \alpha_a^k \mid B) \sum_{m=0}^N (-1)^m (q^k)^{N-m} \Tr(\wedge^m \sigma) \\
&=  \Tr(\sigma \circ \alpha_a^k \mid B) \det(q^k I - \sigma)
\end{align*}
since the characteristic polynomial satisfies the relation $\sum_{m=0}^N (-1)^m t^{N-m} \Tr(\wedge^m A) = \det(tI - A)$. Since $\det(q^k I - \sigma) = \det(q^k I - P)$, the result follows by the twisted Dwork trace formula  (Theorem \ref{thm:twisted_trace}). 
\end{proof}

\begin{corollary}\label{thm:l_function_determinant_cohomology}
\[
L(\bd, f/\bb F_q, T) = \prod_{m=0}^N \det\nolimits_{\mathcal{H}^{(m)}} \! \left(I - T\Frob_q^m \ \Big|\ H^m(B, D)\right)^{(-1)^{m+1}}.
\]
\end{corollary}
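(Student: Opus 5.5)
The plan is to pass from the chain-level trace formula of Theorem \ref{thm:koszul_trace} to cohomology, using a twisted Lefschetz argument on each eigenspace. First I would exponentiate Theorem \ref{thm:koszul_trace}. This gives
\[
L(\bd, f/\bb F_q, T) = \prod_{m=0}^N \det\nolimits_{\mathcal{H}^{(m)}}\!\left(I - T\Frob_q^m \mid \Omega^m(B,D)\right)^{(-1)^{m+1}},
\]
which is an identity of formal power series. Each $\Frob_q^m = q^{N-m}\alpha_a\otimes\mathrm{id}$ is completely continuous, and $\mathcal{H}^{(m)}$ has finite order $d$ and commutes with it. The remaining task is to replace the chain groups $\Omega^m$ by the cohomology groups $H^m(B,D)$.

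To do this, I would work eigenvalue by eigenvalue. As in Section \ref{sec:fredholm}, each twisted determinant on $\Omega^m$ equals $\prod_{\lambda\ne0}(1-\lambda T)^{c^{(m)}_\lambda}$, where $c^{(m)}_\lambda = \Tr(\mathcal{H}^{(m)}\mid \Omega^m_\lambda)$ and $\Omega^m_\lambda$ is the finite-dimensional generalized $\lambda$-eigenspace of $\Frob_q^m$. By Lemma \ref{lem:D_commutes_alpha}, $\Frob_q^\bullet$ is a chain map: one checks $D\circ\Frob_q^m = \Frob_q^{m+1}\circ D$, and the factors $q^{N-m}$ exactly absorb the $q$ coming from $qD_{i,j}\alpha_a = \alpha_a D_{i,j}$. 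Hence $D$ sends $\Omega^m_\lambda$ into $\Omega^{m+1}_\lambda$, since generalized eigenspaces are preserved by maps intertwining the operators. Likewise $\mathcal{H}^\bullet$ preserves each $\Omega^\bullet_\lambda$, because it commutes with both $D$ and $\Frob_q$. This gives, for each $\lambda$, a finite-dimensional subcomplex $\Omega^\bullet_\lambda$ with an action of the finite-order chain map $\mathcal{H}^\bullet$.

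On this finite subcomplex the usual Hopf trace formula applies to $\mathcal{H}^\bullet$, and more precisely to $\mathcal{H}^\bullet\circ(\Frob_q^\bullet)^k$:
\[
\sum_m (-1)^m \Tr(\mathcal{H}^{(m)}\mid\Omega^m_\lambda)=\sum_m(-1)^m\Tr(\mathcal{H}^{(m)}\mid H^m(\Omega^\bullet_\lambda)).
\]
This holds because the traces are additive on the short exact sequences $0\to Z\to\Omega\to B^{+1}\to0$ and $0\to B\to Z\to H\to 0$, all of which are $\mathcal{H}$-stable.

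The main obstacle is identifying $H^m(\Omega^\bullet_\lambda)$ with the generalized $\lambda$-eigenspace of $\Frob_q^m$ acting on $H^m(B,D)$, with the remaining eigenvalues handled compatibly. My first attempt would be Serre's Riesz decomposition for completely continuous operators. For each $\lambda$, it splits $\Omega^m = \Omega^m_\lambda\oplus R^m_\lambda$, where $\Frob_q^m-\lambda$ is invertible on $R^m_\lambda$. The projector onto $\Omega^m_\lambda$ is a limit of polynomials in $\Frob_q^m$, so it commutes with $D$ and with $\mathcal{H}$. Therefore the splitting is a splitting of complexes, and $H^m(B,D)=H^m(\Omega_\lambda)\oplus H^m(R_\lambda)$. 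On the second summand $\Frob_q-\lambda$ acts invertibly, so $\lambda$ contributes only through $H^m(\Omega_\lambda)$.

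A subtlety is that $H^m(B,D)$ might be non-Hausdorff or infinite-dimensional. I would therefore define $\det_{\mathcal{H}^{(m)}}(I-T\Frob_q^m\mid H^m)$ through the eigenvalue product $\prod_\lambda(1-\lambda T)^{\Tr(\mathcal{H}\mid H^m_\lambda)}$, or equivalently via traces summed over eigenspaces. Combining the Hopf identity for each $\lambda$ with the product expansion then yields the stated formula. Since everything is done eigenvalue by eigenvalue, the identity holds on the level of exponents of each factor $(1-\lambda T)$, so convergence issues do not arise.
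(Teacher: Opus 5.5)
Your proposal is correct and takes the route the paper intends. The paper states the corollary without proof, as a direct consequence of Theorem~\ref{thm:koszul_trace}: exponentiate the chain-level trace formula, then pass from $\Omega^\bullet(B,D)$ to $H^\bullet(B,D)$. Your Riesz splitting of the complex eigenvalue by eigenvalue, combined with the Hopf trace formula for $\mathcal{H}^\bullet$, is a valid way to justify that passage. Reading the twisted determinant on $H^m(B,D)$ through its generalized eigenspaces is also the right interpretation, since the cohomology need not be Hausdorff.

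One simplification: you do not need the spectral projector to be a limit of polynomials in $\Frob_q^m$. Because $D$ and $\mathcal{H}^\bullet$ intertwine $(\Frob_q^\bullet-\lambda)^r$, they preserve both $\ker(\Frob_q^\bullet-\lambda)^r$ and $\operatorname{im}(\Frob_q^\bullet-\lambda)^r$, which already makes the Riesz decomposition a splitting of complexes.
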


In classical Dwork theory, if a Laurent polynomial $f$ is non-degenerate with respect to its Newton polyhedron $\Delta(f)$ and the dimension of $\Delta(f)$ is full, then the associated Koszul complex is acyclic except in the top degree. One might hope that if $f$ is classically non-degenerate, its unfolded polynomial $G(y) = \sum_{l=0}^{d-1} f(y_{(l)})$ is also automatically non-degenerate. Unfortunately, this is not necessarily the case as the following example shows.

For $p \neq 2$, $f(x_1, x_2) = x_1 x_2 + x_1 x_2^{-1}$ is classically non-degenerate. With $d_1 = 1$ and $d_2 = 2$, its unfolded polynomial is
\[
G(y) = f(y_{1,0}, y_{2,0}) + f(y_{1,0}, y_{2,1}) = y_{1,0}(y_{2,0} + y_{2,0}^{-1} + y_{2,1} + y_{2,1}^{-1}).
\]
Since the variable $y_{1,0}$ is shared among the partial derivatives of $G$, we see that $G$ is degenerate with respect to $\Delta(G)$. Thus, we define non-degeneracy of partial $L$-functions on the level of the unfolded polynomial $G$.

\begin{definition} \label{def:d_non_degenerate}
A Laurent polynomial $f \in \bb F_q[x_1^{\pm 1}, \dots, x_n^{\pm 1}]$ is said to be \emph{$\bd$-non-degenerate} if its unfolded polynomial $G(y) = \sum_{l=0}^{d-1} f(y_{(l)})$ is classically non-degenerate with respect to its Newton polyhedron $\Delta(G)$. That is, for every face $\tau \subseteq \Delta(G)$ not containing the origin, the system of partial derivatives $y_{i,j} \frac{\partial G_\tau}{\partial y_{i,j}} = 0$ has no common solution in the algebraic torus $(\overline{\bb F}_q^*)^N$.
\end{definition}

Since the cohomology $H^\bullet(B, D)$ developed in Section \ref{S: cohom} is the usual Koszul cohomology associated to the polynomial $G$, we may apply the theory of Adolphson and Sperber \cite{AdolpSperb-ExponentialSumsand-1989} directly. Assume $f$ is $\bd$-non-degenerate, then $G$ satisfies the classical non-degeneracy conditions. Assuming the dimension of $\Delta(G)$ is $N$, the Koszul complex is acyclic: $H^m(B, D) = 0$ for all $m \neq N$, and $H^N(B, D)$ is a finite-dimensional $K$-vector space of dimension $N! \> \text{Vol}(\Delta(G))$. A precise statement may also be made in the case when $f$ is ${\bf d}$-non-degenerate and $\dim \Delta(G) < N$; see \cite{AdolpSperb-ExponentialSumsand-1989} for details.

Consequently, the partial $L$-function becomes
\begin{align*}
L(\bd, f/\bb F_q, T)^{(-1)^{N+1}} &= \det\nolimits_{\mathcal{H}^{(N)}} \! \left(I - T\Frob_q^N \ \Big|\ H^N(B, D)\right) \\
&= \det\nolimits_{\operatorname{sgn}(P)\sigma} \! \Big( I-T\alpha_a \, \Big| \, B / \sum_{i,j }D_{i,j}B \Big ).
\end{align*}
We emphasize that this determinant is a \emph{twisted} Fredholm determinant in the sense of Section~\ref{sec:fredholm}. Thus, at this point, we only know that it is a rational function even in this non-degenerate situation. For this reason, although it is natural to seek a Newton-over-Hodge statement analogous to the classical theory, at the moment it is not clear what such a statement would be, even when non-degenerate.

\section{Unique $p$-adic unit root}\label{sec:unique_root}

In this section we prove that the partial \(L\)-function $L(\bd,f/\F_q,T)^{(-1)^{n-1}}$ has a unique \(p\)-adic unit root, and that this root is a \(1\)-unit. In Section \ref{sec:unit_root_formula} we give a formula for the unit root in terms of $\mathcal{A}$-hypergeometric functions.

Recall from Section \ref{S: Banach} that the $p$-adic Banach space $B$ has orthonormal basis $e_u := \gamma^{w(u)} y^u$ for $u \in M$. In \cite{MR2966711}, Adolphson and Sperber proved that the (classical) $p$-adic Fredholm determinant $\det(I - T\alpha_a \mid B)$ has exactly one reciprocal root $\lambda_0$ that is a $p$-adic unit, and further, $\lambda_0$ is a $1$-unit, meaning $| \lambda_0 - 1 |_p < 1$.

Let $V_{\lambda_0} \subset B$ be the 1-dimensional eigenspace associated to $\lambda_0$, spanned by the eigenvector $\xi_0 = \sum_{u \in M} C_u e_u \in B$. By Lemma \ref{lem:g_commutes_alpha}, $\sigma$ commutes with $\alpha_a$, and thus $\sigma$ preserves the eigenspaces of $\alpha_a$. Since $V_{\lambda_0}$ is $1$-dimensional, $\sigma$ maps $\xi_0$ to a scalar multiple of itself: $\sigma(\xi_0) = \zeta \xi_0$ for some root of unity $\zeta$. We claim that $\zeta = 1$.

\begin{lemma}\label{lem:twined_trace_unit}
$\zeta = \Tr(\sigma \mid V_{\lambda_0}) = 1$.
\end{lemma}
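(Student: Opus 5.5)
We have to show that $\zeta = 1$, where $\sigma(\xi_0) = \zeta\xi_0$. The first equality in the statement is immediate. Since $V_{\lambda_0}$ is one-dimensional and spanned by $\xi_0$, the restriction of $\sigma$ to it is multiplication by $\zeta$, so $\Tr(\sigma \mid V_{\lambda_0}) = \zeta$. By Theorem \ref{thm:integrality}, this trace is a rational integer. It is also a root of unity, because $\sigma$ has order $d$. Hence $\zeta \in \{\pm 1\}$, and the only real task is to exclude $\zeta = -1$.

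To do this, I will look at the coefficients of $\xi_0$ directly. The operator $\sigma$ acts on the orthonormal basis by $\sigma(e_u) = \gamma^{w(u)} y^{Pu} = e_{Pu}$, using Lemma \ref{lem:weight_invariant}. Writing $\xi_0 = \sum_u C_u e_u$, the relation $\sigma\xi_0 = \zeta\xi_0$ becomes $C_{P^{-1}u} = \zeta C_u$ for all $u \in M$.

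Apply this at $u = 0$. Since $P0 = 0$, it gives $C_0 = \zeta C_0$. So it suffices to show $C_0 \neq 0$, and then $\zeta = 1$ follows at once. This also bypasses the integrality step entirely.

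The main obstacle is proving $C_0 \neq 0$, that is, that the unit-root eigenvector has a nonzero constant term. I would follow the Adolphson--Sperber argument from \cite{MR2966711} and work modulo the maximal ideal. Normalize $\xi_0$ so that $\sup|C_u| = 1$. By Lemma \ref{L: F estimates}, $F_a(y) \equiv 1$ modulo elements of positive valuation, after accounting for the $\gamma^{w}$ scaling. Thus the matrix of $\alpha_a$ with respect to $\{e_u\}$ has entries $A_{v,u}$ with $\ord_\gamma A_{v,u} \ge (q-1)w(v)$ or better, roughly. Modulo the maximal ideal, this matrix reduces to the rank-one projection onto $e_0$: $\bar\alpha_a(e_u) = \bar A_{0,u} e_0$, and $\bar A_{0,0} = 1$.

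Now reduce the eigen-equation $\alpha_a\xi_0 = \lambda_0\xi_0$. Since $\lambda_0$ is a unit, $\bar\lambda_0 \bar\xi_0 = \bar\alpha_a\bar\xi_0$ lies in the span of $e_0$. Because $\bar\xi_0 \neq 0$, this forces $\bar\xi_0 = \bar C_0 e_0$ with $\bar C_0 \neq 0$, so $C_0$ is a unit and in particular nonzero.

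The care needed lies in checking the estimate $\ord_\gamma A_{v,u} > 0$ for $v \neq 0$, uniformly in $u$. This comes from $\ord_\gamma A_{v,u} \ge w(qv - u) + w(u) - w(v) \ge (q-1)w(v)$, which follows from subadditivity of $w$ as in the proof that $\alpha$ is well-defined. Since $w(v) \ge 1/D$ for $v \neq 0$, the bound is uniformly positive. With this in place, $C_0 \neq 0$, and therefore $\zeta = 1$.
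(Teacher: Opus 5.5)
Your argument is essentially the paper's: show $C_0\neq 0$ from the shape of the matrix of $\alpha_a$ modulo the maximal ideal, then read off $C_0\zeta=C_0$ from $\sigma(e_u)=e_{Pu}$ and $P0=0$. The paper quotes the Adolphson--Sperber congruences and argues by contradiction from $C_0=0$, and your detour through Theorem~\ref{thm:integrality} is, as you note, unnecessary.

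One step is misjustified. The bound $\ord_\gamma A_{v,u}\ge w(qv-u)+w(u)-w(v)\ge (q-1)w(v)$ presumes that the coefficients of $F_a$ satisfy $\ord_\gamma\ge w(\cdot)$, as those of $F$ do in Lemma~\ref{L: F estimates}. This is false, since the factor $F^{\tau^m}(y^{p^m})$ only yields $w(r)/p^m$. For example, with $f=x$ and $a=2$, the $y^p$-coefficient of $F_a=\theta(y)\theta(y^p)$ is $\theta_p+\gamma$, which has $\gamma$-order $1$ rather than $w(p)=p$.

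The repair is to apply the estimate from the well-definedness proof to $\alpha$ itself. Its $(v,u)$-entry $\tau^{-1}(\gamma^{w(u)-w(v)}B_{pv-u})$ has $\ord_\gamma\ge w(u)+w(pv-u)-w(v)\ge (p-1)w(v)$. Then $\alpha_a=\alpha^a$ gives $\ord_\gamma A_{v,u}\ge (p-1)w(v)\ge (p-1)/D>0$ for all $v\neq 0$, uniformly in $u$, which is exactly what your reduction argument needs.
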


\begin{proof}
In \cite{MR2966711}, Adolphson and Sperber show that the matrix $(A_{u,v})_{u,v \in M}$ of $\alpha_a$ with respect to the basis $\{e_u\}_{u \in M}$ satisfies
\[
A_{0,0} \equiv 1 \pmod{\gamma}, \qquad A_{u,v} \equiv 0 \pmod{\gamma} \ \text{ if } \ u, v \neq 0.
\]
Let us first show $C_0 \neq 0$. Suppose for contradiction that $C_0 = 0$, and rescale so that $\|\xi_0\| = \sup_{u \neq 0} |C_u|_p = 1$. For any $u \neq 0$, it follows from $\alpha_a(\xi_0) = \lambda_0 \xi_0$ that
\[
\lambda_0 C_u = \sum_{v \in M} A_{u,v} C_v = \sum_{v \neq 0} A_{u,v} C_v,
\]
where the $v = 0$ term vanishes since $C_0 = 0$. Since $|A_{u,v}|_p \leq |\gamma|_p$ for $u, v \neq 0$ and $|\lambda_0|_p = 1$, we have 
\[
|C_u|_p \leq \max_{v \neq 0} |A_{u,v}|_p \, |C_v|_p \leq |\gamma|_p < 1.
\]
Taking the supremum over $u \neq 0$ contradicts $\sup_{u \neq 0} |C_u|_p = 1$. Hence $C_0 \neq 0$.

Next, by Lemma \ref{lem:weight_invariant}, $w(Pu) = w(u)$, and so $\sigma$ acts on the basis by $\sigma(e_u) = \gamma^{w(u)} y^{Pu} = e_{Pu}$. In particular, $\sigma(e_0) = e_0$, and thus
\[
\sigma(\xi_0) = C_0 e_0 + \sum_{\substack{u \in M \\ u \neq 0}} C_u e_{Pu}.
\]
Comparing the $e_0$-coefficient of $\sigma(\xi_0) = \zeta \xi_0$ gives $C_0 \zeta = C_0$, and since $C_0 \neq 0$ we have $\zeta = 1$ as desired.
\end{proof}

\begin{theorem}\label{thm:unique_unit_root_existence}
The partial $L$-function $L(\bd, f/\bb F_q, T)^{(-1)^{n-1}}$ has exactly one reciprocal $p$-adic unit root, which is the unique unit root $\lambda_0$ of $\det(I - T\alpha_a \mid B)$.
\end{theorem}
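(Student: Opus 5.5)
We will use the factorization of Theorem \ref{thm:l_function_factorization},
\[
L(\bd, f/\bb F_q, T)^{(-1)^{n-1}} = \prod_{I \subseteq \{1,\dots,n\}} \det\nolimits_\sigma(I - q^{d_I}T\alpha_a \mid B)^{(-1)^{|I|}},
\]
together with the eigenspace expansion of Section \ref{sec:fredholm}:
\[
\det\nolimits_\sigma(I - T\alpha_a \mid B) = \prod_{\lambda \neq 0}(1-\lambda T)^{c_\lambda(1)}, \qquad c_\lambda(1) \in \bb Z
\]
by Theorem \ref{thm:integrality}. Substituting, the partial $L$-function becomes
\[
\prod_{I}\prod_{\lambda\neq0}(1-q^{d_I}\lambda T)^{(-1)^{|I|}c_\lambda(1)}.
\]
This is a rational function by the rationality corollary. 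So its reciprocal zeros and poles are exactly the numbers $q^{d_I}\lambda$, each with net integer exponent obtained by collecting all pairs $(I,\lambda)$ that produce the same value.

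First, isolate the units. The eigenvalues of the completely continuous operator $\alpha_a$ all satisfy $|\lambda|_p\le 1$, since $\alpha_a$ has integral matrix in the orthonormal basis $e_u$. Adolphson--Sperber \cite{MR2966711} shows exactly one eigenvalue $\lambda_0$ is a unit and $V_{\lambda_0}$ is one-dimensional. For $I\neq\emptyset$, $q^{d_I}\lambda$ has positive $p$-adic valuation. Hence a reciprocal root or pole $q^{d_I}\lambda$ is a unit only if $I=\emptyset$ and $\lambda=\lambda_0$.

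Second, show that no cancellation occurs at $\lambda_0$. A unit value $\mu$ can arise only from the pair $(\emptyset,\lambda_0)$. Any other pair $(I,\lambda)$ gives a non-unit, so it cannot equal $\lambda_0$. The net exponent of $(1-\lambda_0T)$ in the $L$-function is therefore exactly $c_{\lambda_0}(1)=\Tr(\sigma\mid V_{\lambda_0})$, which equals $1$ by Lemma \ref{lem:twined_trace_unit}. Thus $\lambda_0$ is a reciprocal root, of multiplicity one, of the polynomial part of $L(\bd,f/\bb F_q,T)^{(-1)^{n-1}}$, and no other unit reciprocal root or pole exists. Since $\lambda_0$ is a $1$-unit, the statement follows.

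The main obstacle I expect is the bookkeeping in the second step. The twisted determinant is an infinite product with integer, possibly negative, exponents. We must justify that the multiplicity of the factor $(1-\lambda_0T)$ in the meromorphic function equals the formal exponent $c_{\lambda_0}(1)$. I would handle this by writing the product as $(1-\lambda_0 T)^{c_{\lambda_0}(1)}$ times the remaining product over pairs $(I,\lambda)\neq(\emptyset,\lambda_0)$. That remaining product is a ratio of $p$-adic entire functions: split by the sign of the exponents and use $|\lambda|_p\to0$ to get convergence. Each of these entire functions has zeros only at non-unit reciprocal values, so it is nonvanishing at $T=\lambda_0^{-1}$.
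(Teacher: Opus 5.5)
Your proposal is correct and follows the paper's proof. Both arguments factor via Theorem \ref{thm:l_function_factorization}, use integrality of the $c_\lambda(1)$, note that the $q^{d_I}$ factors rule out unit roots for $I \neq \emptyset$, invoke Adolphson--Sperber's unique unit eigenvalue, and use $\Tr(\sigma \mid V_{\lambda_0}) = 1$. Your extra step, splitting the remaining product into entire functions by the sign of the exponents to show nothing cancels or vanishes at $T=\lambda_0^{-1}$, makes explicit a point the paper leaves implicit.
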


\begin{proof}
By Theorem \ref{thm:l_function_factorization},
\[
L(\bd, f/\bb F_q, T)^{(-1)^{n-1}} = \det\nolimits_\sigma(I - T\alpha_a \mid B) \cdot \prod_{\substack{I \subseteq \{1, \dots, n\} \\ I \not= \emptyset}} \det\nolimits_\sigma \big(I - q^{d_I} T \alpha_a \mid B\big)^{(-1)^{|I|}}.
\]

By classical Dwork theory, all of the eigenvalues $\lambda$ of $\alpha_a$ on $B$ satisfy $\ord_p(\lambda) \ge 0$. From the formal eigenspace factorization of the twisted Fredholm determinant established in Section \ref{sec:fredholm}, we can expand the twisted Fredholm determinant over the eigenspaces $V_\lambda$ of $\alpha_a$:
\[
\det\nolimits_\sigma \big(I - T \alpha_a \mid B\big) = \prod_{\lambda \neq 0} (1 - \lambda T)^{\Tr(\sigma \mid V_\lambda)}.
\]
By Lemma \ref{thm:integrality}, $\Tr(\sigma \mid V_\lambda) \in \bb Z$, and so due to the $q^{d_I}$, the only term in the product that could have a unit root is from $\det\nolimits_\sigma(I - T\alpha_a \mid B)$. By \cite{MR2966711}, $\alpha_a$ has exactly one unit eigenvalue $\lambda_0$, which is a $1$-unit, and by Lemma \ref{lem:twined_trace_unit}, the trace multiplicity of $\sigma$ on the corresponding unit eigenspace is $\Tr(\sigma \mid V_{\lambda_0}) = 1$.
\end{proof}

\section{Unit root formula}\label{sec:unit_root_formula}

In Section \ref{sec:unique_root}, we showed that the unique unit root the partial $L$-function is the same as the unit root in Adolphson and Sperber's result \cite{MR2966711}. Thus, their unit root formula applies here. For completeness, we describe it here and compare this formula with the classical formula.

Let $f(x) = \sum_{u \in \supp(f)} \bar c_u x^u \in \bb F_q[x_1^{\pm 1}, \dots, x_n^{\pm 1}]$. We introduce parameters $\Lambda = (\Lambda_u)_{u \in \supp f}$ for each monomial in $f$, and define $f_\Lambda(x) := \sum_{u \in \supp(f)} \Lambda_u x^u$. Write
\[
\exp\left( \gamma f_\Lambda(x) \right) = \sum_{v \in \bb Z^n} A_v(\Lambda) x^v,
\]
where the coefficients may be explicitly written as:
\[
A_v(\Lambda) = \sum \prod_{u \in \supp(f)} \frac{(\gamma \Lambda_u)^{k_u}}{k_u!},
\]
where the sum runs over all $k_u \geq 0$ for $u \in \supp(f)$ such that $\sum k_u u = v$. Next, by definition of the unfolded polynomial $G(y)$, we have
\[
\exp\big(\gamma G_\Lambda(y)\big) = \prod_{l=0}^{d-1} \exp\left(\gamma  f_\Lambda(y_{(l)}) \right) = \sum_{v \in \bb Z^N} G_v(\Lambda) y^v,
\]
where $y_{(l)} := (y_{1, l \bmod d_1}, \dots, y_{n, l \bmod d_n})$. In particular, 
\[
G_0(\Lambda) = \sum_{\mathbf{w} \in \mathcal{W}_{\bd}} \prod_{l=0}^{d-1} A_{w^{(l)}}(\Lambda),
\]
where $\mathcal{W}_{\bd}$ is the set of all $d$-tuples of vectors $\mathbf{w} = (w^{(0)}, \dots, w^{(d-1)}) \in (\bb Z^n)^d$ satisfying: for every $1 \le i \le n$ and $j \in \bb Z / d_i \bb Z$,
\begin{equation}\label{E: balance}
\sum_{m=0}^{(d/d_i) - 1} w_i^{(j + m d_i)} = 0.
\end{equation}
Adolphson and Sperber \cite{MR2966711} proved that 
\[
\mathcal{F}(\Lambda) := \frac{G_0(\Lambda)}{G_0(\Lambda^p)}
\]
is $p$-adic convergent on the closed unit polydisk in $\Lambda$. Specializing this to the coefficients of $f$ gives the unit root:

\begin{theorem}\label{thm:main_unit_root_formula}
For $f(x) = \sum \bar c_u x^u \in \bb F_q[x_1^\pm, \ldots, x_n^\pm]$, let $\hat c_u$ denote the Teichm\"uller lift of $\bar c_u$. The unique unit root $\lambda_0$ of the partial $L$-function $L(\bd, f/\bb F_q, T)^{(-1)^{n-1}}$ is given by specializing $\Lambda_u = \hat c_u$:
\[
\lambda_0 = \mathcal{F}(\hat c) \mathcal{F}(\hat c^p) \cdots \mathcal{F}(\hat c^{p^{a - 1}}).
\]
\end{theorem}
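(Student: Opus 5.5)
By Theorem \ref{thm:unique_unit_root_existence}, the unit root $\lambda_0$ of $L(\bd, f/\bb F_q, T)^{(-1)^{n-1}}$ coincides with the unique unit eigenvalue of the untwisted operator $\alpha_a = \alpha^a$ on $B$. So the twist $\sigma$ drops out, and the task reduces to computing the unit eigenvalue of Dwork's Frobenius for the single Laurent polynomial $G$ in $N$ variables. For this we use the Adolphson--Sperber method \cite{MR2966711}, applied to $G$ with its coefficients treated as deformation parameters. The one twist is that $G$ has repeated coefficients: the monomials of $G$ coming from the shift $l$ carry the same coefficient $c_u$ for all $l$. We therefore specialize their parameter vector along the diagonal, with $\Lambda$ indexed by $\supp(f)$ rather than by $\supp(G)$.

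First, we place $\alpha$ in a family. Let $\alpha_\Lambda := \psi_p \circ F_\Lambda(y)$, where
\[
F_\Lambda(y) = \prod_{l}\prod_{u}\theta\big(\Lambda_u \textstyle\prod_i y_{i,l \bmod d_i}^{u_i}\big),
\]
and $\alpha_\Lambda$ maps the space $B_{\Lambda^p}$ to $B_\Lambda$. Evaluating at Teichm\"uller points, the composite $\alpha_{\hat c^{p^{a-1}}}\circ\cdots\circ\alpha_{\hat c}$, with the $\tau$ bookkeeping, recovers $\alpha_a$. In the splitting $F(y)=K(y)/K^\tau(y^p)$ we replace the Artin--Hasse-type product by $\exp(\gamma G_\Lambda(y))$. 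The ratio $\theta(t)/\exp(\gamma t)$ is overconvergent, so this changes $\alpha_\Lambda$ only by conjugation by an invertible operator, and such a conjugation does not change the eigenvalues. Next we examine the dual operator on the dual space $B^*$ of series $\sum_u b_u y^{-u}$. Following Adolphson and Sperber, the element
\[
\xi^*(\Lambda) := \frac{\gamma_-\big(\exp(\gamma G_\Lambda(y))\big)}{G_0(\Lambda)},
\]
normalized so that its constant term is $1$ (here $\gamma_-$ is the projection onto negative exponents), is carried by the adjoint Frobenius to $\mathcal F(\Lambda)\,\xi^*(\Lambda^p)$ modulo a subspace on which the reduction mod $\gamma$ vanishes. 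The coefficients $G_v(\Lambda)$ attached to $G$ are exactly the products described in \eqref{E: balance}, so $G_0(\Lambda)$ as written is the relevant normalizing series.

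The main step, and the expected obstacle, is the analytic input. We need that $\mathcal F(\Lambda)=G_0(\Lambda)/G_0(\Lambda^p)$, and likewise each ratio $G_v(\Lambda)/G_0(\Lambda)$, converges on the closed unit polydisk, including along the diagonal specialization. The paper quotes exactly this statement for $\mathcal F$ as a result of \cite{MR2966711}, stated for an arbitrary Laurent polynomial and any specialization of its coefficients. We take it as given, noting only that Adolphson and Sperber's Dwork-type congruences do not assume that the monomials carry independent parameters.

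Granting the convergence, we iterate $a$ times, specializing at $\Lambda=\hat c^{p^m}$ for $m=0,\dots,a-1$ so that $\hat c^{p^a}=\hat c$. This produces a vector fixed up to the scalar $\prod_{m=0}^{a-1}\mathcal F(\hat c^{p^m})$. That scalar is a unit, since $\mathcal F\equiv 1 \pmod{\gamma}$. It is therefore an eigenvalue of $\alpha_a^*$, and hence of $\alpha_a$ by the Serre theory of completely continuous operators. Uniqueness of the unit eigenvalue, from \cite{MR2966711} together with Theorem \ref{thm:unique_unit_root_existence}, then identifies this scalar with $\lambda_0$, which proves the formula.
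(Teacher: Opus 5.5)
Your overall route is the paper's. Theorem~\ref{thm:unique_unit_root_existence} reduces the statement to the unique unit eigenvalue of the untwisted operator $\alpha_a$ on $B$, and one then applies the Adolphson--Sperber unit root theorem \cite{MR2966711} to the unfolded polynomial $G$, with parameters $\Lambda_u$ indexed by $\supp(f)$. The paper's proof is exactly this reduction plus a citation of their \emph{complete} theorem. That theorem gives both the analyticity of $\mathcal{F}$ and the fact that $\prod_{m=0}^{a-1}\mathcal{F}(\hat c^{p^m})$ is the unit eigenvalue of $\alpha_a$. You cite only the analyticity and re-derive the eigenvalue statement yourself. Two steps of that re-derivation are wrong as written, so ``It is therefore an eigenvalue of $\alpha_a^*$'' does not follow from what precedes it.

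First, $\theta(t)/\exp(\gamma t)=\exp\bigl(\sum_{i\ge1}\gamma^{p^i}t^{p^i}/p^i\bigr)$ is not overconvergent. Write it as $\theta(t)\exp(-\gamma t)$. Its coefficient of $t^{p^r}$ ($r\ge1$) then has $\ord_\gamma$ exactly $1$, coming from $(-\gamma)^{p^r}/(p^r)!$, while $t^{p^r}$ has weight $p^r$ for a vertex monomial $t$. So multiplication by it does not even map $B$ into itself. The ratio that actually enters when you replace $K$ by $\exp(\gamma\widehat G)$ is $K(y)/\exp(\gamma\widehat G(y))=\exp\bigl(\sum_{m\ge1}\gamma_m\widehat G^{\tau^m}(y^{p^m})\bigr)$. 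The estimate $\ord_p\gamma_m=p^{m+1}/(p-1)-(m+1)$ makes this a bounded invertible multiplier on $B$ for $p\ge3$, but it is not enough for $p=2$.

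Second, a relation holding only ``modulo a subspace on which the reduction mod $\gamma$ vanishes'' does not make the specialized vector an eigenvector. What is needed is the exact formal identity
\[
\alpha_\Lambda^*\bigl(\xi^*(\Lambda^p)\bigr)=\mathcal{F}(\Lambda)\,\xi^*(\Lambda)
\]
for $\alpha_\Lambda=\psi_p\circ\exp\bigl(\gamma G_\Lambda(y)-\gamma G_{\Lambda^p}(y^p)\bigr)$; note the direction is the reverse of the one you wrote. It holds for two reasons. This multiplier times $\exp(\gamma G_{\Lambda^p}(y^p))$ equals $\exp(\gamma G_\Lambda(y))$. And the part of $\exp(\gamma G_{\Lambda^p})$ supported off $-M$ stays off $-M$ after $y\mapsto y^p$ and multiplication by a series supported in $M$, so $\gamma_-$ discards it. Compose these identities at $\Lambda=\hat c^{p^m}$, $m=a-1,\dots,0$, and use the analyticity of every $G_v/G_0$. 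This gives an honest eigenvector $\xi^*(\hat c)$ of $\alpha_a^*$ with eigenvalue $\prod_m\mathcal{F}(\hat c^{p^m})$.

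Finally, justify the diagonal specialization by restriction, not by inspecting their congruences. Apply \cite{MR2966711} to the generic family with one parameter per pair $(l,u)$, which matches the factors of $F(y)$. Then restrict to $\Lambda_{(l,u)}=\Lambda_u$; this preserves convergence on the closed unit polydisk and commutes with $\Lambda\mapsto\Lambda^p$. If you replace your sketch by a citation of the full Adolphson--Sperber theorem, specialized in this way, your proof coincides with the paper's.
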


We conclude this section by comparing the unit root formula with the classical case $d_1 = \dots = d_n = d$ and the asymmetric case $d_1 = 1$, $d_2 = 2$, and $d=2$.

Suppose $d_1 = \dots = d_n = d$. Then in equation (\ref{E: balance}), since $d/d_i = 1$ we have $w_i^{(j)} = 0$ for all $i$ and $j$. This forces every $w^{(l)} = 0$ for every $l$. Thus,
\[
G_0(\Lambda) = \prod_{l=0}^{d-1} A_0(\Lambda) = \big( A_0(\Lambda) \big)^d.
\]
as expected. 

Now for the asymmetric case. Suppose $n=2$, and $d_1 = 1$, $d_2 = 2$, and $d=2$. In this case there are two vectors $w^{(0)}, w^{(1)} \in \bb Z^2$, and equation (\ref{E: balance}) becomes: 
\[
\text{for $i=1$: $w_1^{(1)} = -w_1^{(0)}$} \qquad \text{and} \qquad \text{for $i=2$: $w_2^{(0)} = 0$ and $w_2^{(1)} = 0$.}
\]
Letting $k = w_1^{(0)}$, the valid tuples are exactly $w^{(0)} = (k, 0)$ and $w^{(1)} = (-k, 0)$. Then
\[
G_0(\Lambda) = \sum_{k \in \bb Z} A_{(k, 0)}(\Lambda) A_{(-k, 0)}(\Lambda).
\]

\bibliographystyle{amsplain}
\bibliography{../References/References}

\noindent
\texttt{haessig@math.arizona.edu}

\end{document}